\newcommand{\citet}[1]{\cite{#1}}
\newcommand{\ab}{\mathbf{a}}
\newcommand{\bb}{\mathbf{b}}
\newcommand{\eb}{\mathbf{e}}
\newcommand{\mb}{\mathbf{m}}
\newcommand{\ub}{\mathbf{u}}
\newcommand{\wb}{\mathbf{w}}
\newcommand{\xb}{\mathbf{x}}
\newcommand{\yb}{\mathbf{y}}
\newcommand{\zb}{\mathbf{z}}
\newcommand{\Ab}{\mathbf{A}}
\newcommand{\Gb}{\mathbf{G}}
\newcommand{\Ib}{\mathbf{I}}
\newcommand{\Pb}{\mathbf{P}}
\newcommand{\Rb}{\mathbf{R}}
\newcommand{\Vb}{\mathbf{V}}
\newcommand{\Xb}{\mathbf{X}}
\newcommand{\Yb}{\mathbf{Y}}
\newcommand{\Xc}{\mathcal{X}}
\newcommand{\argmin}{\text{argmin}}
\newcommand{\argmax}{\text{argmax}}
\newcommand{\norm}[1]{\left\lVert#1\right\rVert}
\newtheorem{theorem}{Theorem}
\newtheorem{assumption}{Assumption}
\newtheorem{corollary}[theorem]{Corollary}
\newtheorem{definition}{Definition}
\newtheorem{lemma}[theorem]{Lemma}
\newtheorem{remark}{Remark}
\title{\LARGE \bf Dynamic Regret Analysis of Safe Distributed Online Optimization for Convex and Non-convex Problems}
\author{Ting-Jui Chang, Sapana Chaudhary, Dileep Kalathil and Shahin Shahrampour  
\thanks{T.J. Chang and S. Shahrampour are with the Department of Mechanical and Industrial Engineering, Northeastern University, Boston, MA 02115, USA. 
{\tt\footnotesize email:\{chang.tin,s.shahrampour\}@northeastern.edu}.}%
\thanks{S. Chaudhary and D. Kalathil are with the Department of Electrical and Computer Engineering, Texas A\&M University, College Station, TX 77843, USA.
{\tt\footnotesize email:\{sapanac,dileep.kalathil\}@tamu.edu}.}%
}
\begin{document}

\maketitle
\thispagestyle{empty}
\pagestyle{empty}

\begin{abstract}
This paper addresses safe distributed online optimization over an unknown set of linear safety constraints. A network of agents aims at jointly minimizing a \emph{global}, \emph{time-varying} function, which is only partially observable to each individual agent. Therefore, agents must engage in {\it local} communications to generate a {\it safe} sequence of actions competitive with the best minimizer sequence in hindsight, and the gap between the two sequences is quantified via dynamic regret. We propose distributed safe online gradient descent (D-Safe-OGD) with an exploration phase, where all agents estimate the constraint parameters collaboratively to build estimated feasible sets, ensuring the action selection safety during the optimization phase. We prove that for convex functions, D-Safe-OGD achieves a dynamic regret bound of $O(T^{2/3}{\color{black} \sqrt{\log T}} + T^{1/3}C_T^*)$, where $C_T^*$ denotes the path-length of the best minimizer sequence. We further prove a dynamic regret bound of $O(T^{2/3}{\color{black} \sqrt{\log T}} + T^{2/3}C_T^*)$ for certain non-convex problems, which establishes the first dynamic regret bound for a {\it safe distributed} algorithm in the {\it non-convex} setting.

\end{abstract}
\section{Introduction}
Online learning (or optimization) is a sequential decision-making problem modeling a repeated game between a learner and an adversary \cite{hazan2016introduction}. At each round $t$, $t\in [T]\triangleq\{1,\ldots,T\}$, the learner chooses an action $\xb_t$ in a convex set $\Xc \subseteq \mathrm{R}^d$ using the information from previous observations and suffers the loss $f_t(\xb_t)$, where the function $f_t:\Xc\to\mathrm{R}$ is chosen by the adversary. Due to the sequential nature of the problem, a commonly used performance metric is \emph{regret}, defined as the difference between the cumulative loss incurred by the learner and that of a benchmark comparator sequence. When the comparator sequence is fixed, this metric is called {\it static} regret, defined as follows
\begin{align}\label{eq:static}
\mathbf{Reg}^s_T \triangleq \sum^T_{t=1}f_t(\xb_t)-\min_{\xb\in\Xc}\sum^T_{t=1}f_t(\xb).
\end{align}
Static regret is well-studied in the online optimization literature. In particular, it is well-known that online gradient descent (OGD) achieves a $O(\sqrt{T})$ (respectively, $O(\log T)$) static regret bound for convex (respectively,  exp-concave and strongly convex) problems \cite{zinkevich2003online,hazan2007logarithmic}, and these bounds are optimal in the sense of matching the lower bound of regret in the respective problems \cite{hazan2016introduction}. 

For non-convex problems, however, we expect that the standard regret notion used in the convex setting may not be a tractable measure for gauging the algorithm performance. For example, in the context of online non-convex optimization, \cite{hazan2017efficient} quantified the regret in terms of the norm of (projected) gradients, consistent with the stationarity measure in offline optimization. More recently, \citet{ghai2022non} showed that under certain geometric and smoothness conditions, OGD applied to non-convex functions is an approximation of online mirror descent (OMD) applied to convex functions under reparameterization. In view of this equivalence, OGD achieves a $O(T^{2/3})$ static regret defined in \eqref{eq:static}.

A more stringent benchmark for measuring the performance of online optimization algorithms is the \emph{dynamic} regret \cite{besbes2015non,jadbabaie2015online}, defined as
\begin{equation}\label{eq: definition of dynamic regret}
    \mathbf{Reg}^d_T \triangleq  \sum^T_{t=1}f_t(\xb_t)-\sum^T_{t=1}f_t(\xb_t^*),
\end{equation}
where $\xb_t^*\triangleq \argmin_{\xb\in\Xc}f_t(\xb)$. It is well-known that dynamic regret scales linearly with $T$ in the worst-case scenario, when the function sequence fluctuates drastically over time. Therefore, various works have adopted a number of variation measures to characterize the dynamic regret bound. We provide a detailed review of these measures in Section \ref{sec:lit} and describe the safe distributed online optimization in the next section.

\subsection{Safe Distributed Online Optimization}
There are two distinctive components that make ``safe distributed online optimization" more challenging than the standard online optimization:

(i) {\bf Distributed Setting:} Distributed online optimization has been widely applied to robot formation control \cite{dixit2019online}, 
distributed target tracking \cite{shahrampour2018distributed}, and localization in sensor networks \cite{akbari2015distributed}. In this setting, a network of $m$ agents aims at solving the online optimization problem collectively. The main challenge is that the time-varying function sequence is only partially observable to each individual agent. Each agent $j\in [m]$ receives (gradient) information about the ``local" function $f_{j,t}(\cdot)$, but the objective is to control the dynamic regret of each agent with respect to the global function $f_t(\cdot) = \sum_{i=1}^m f_{i,t}(\cdot)$, i.e.,  
\begin{align}\label{Eq: Individual regret}
    \mathbf{Reg}^d_{j,T} &\triangleq  \sum^T_{t=1} f_t(\xb_{j,t})-\sum^T_{t=1}f_t(\xb_t^*) \notag\\
    &=\sum^T_{t=1}\sum_{i=1}^m f_{i,t}(\xb_{j,t})-\sum^T_{t=1}f_t(\xb_t^*).
\end{align}
Therefore, despite availability of only local information, the action sequence of agent $j$ is evaluated in the global function, and is compared to the global minimizer sequence. It is evident that agents must communicate with each other (subject to a graph/network constraint) to approximate the global function, which is common to distributed problems. The network structure and communication protocols are provided in Section \ref{sec:net}.

(ii) {\bf Safety Constraints:} The literature on distributed online optimization has mostly focused on problems where the constraint set $\Xc$ is known, and less attention has been given to problems with {\it unknown} feasible sets (see Section \ref{sec:lit} for a comprehensive literature review). However, in many real-world applications, this  set represents certain safety constraints that are {\it unknown} in advance. Examples include voltage regulation constraints in power systems \cite{dobbe2020learning}, transmission bandwidth in communication networks due to human safety considerations \cite{luong2019applications} and stabilizing action set in robotics applications \cite{aastrom2010feedback}. In these scenarios, one needs to perform parameter estimation to {\it learn} the safety constraints while ensuring that the regret is as small as possible.

\subsection{Contributions}

In this work, we address the problem of {\it distributed} online optimization with {\it unknown} linear safety constraints. In particular, the constraint set $\mathcal{X}^s=\{\xb \in \mathrm{R}^d: \Ab \xb \leq \bb \}$ is linear, where $\Ab$ is {\it unknown} and must be learned by agents to subsequently choose their actions from this set. The superscript $s$ in $\mathcal{X}^s$ alludes to safety. Our specific objective is to study {\it dynamic} regret \eqref{eq: definition of dynamic regret} for both convex and non-convex problems when the set $\mathcal{X}^s$ is unknown to agents. Our contributions are three-fold:

\begin{itemize}
    \item[1)] We propose and analyze safe distributed online gradient descent (D-Safe-OGD) algorithm, which has two phases (exploration and optimization). In the exploration phase, agents individually explore and jointly estimate the constraint parameters in a distributed fashion. Then, each agent constructs a feasible set with its own estimate, which ensures the action selection safety with high probability (\cref{L: Bound for estimation error}). Since the estimates are only local, in the optimization phase, agents apply distributed OGD projected to {\it different} feasible sets, which brings forwards additional technical challenges.
    \item[2)] We analyze D-Safe-OGD in the {\it convex} setting. Due to the challenge discussed in the previous item, we cannot directly apply existing results on distributed online optimization with a common feasible set. We tackle this problem by leveraging the geometric property of linear constraints when agents' estimates are close enough to each other. Then, we strike a trade-off between the exploration and optimization phases and prove (in \cref{T: Dynamic regret bound}) a dynamic regret bound of $O(T^{2/3}{\color{black} \sqrt{\log T}}+T^{1/3}C_T^*)$, where
\begin{equation}\label{eq: path length start}
    C_T^*\triangleq \sum_{t=2}^T\norm{\xb^*_t-\xb^*_{t-1}},
\end{equation}
is the {\it path-length}, defined with respect to the global minimizer sequence \cite{mokhtari2016online,jadbabaie2015online}. If the problem is centralized (single agent) and the comparator sequence is fixed, i.e., $\xb^*_t=\xb$, our result recovers the static regret bound of \citet{chaudhary2021safe}.
    \item[3)] We further analyze D-Safe-OGD in the {\it non-convex} setting. We draw up on the idea of the algorithmic equivalence between ODG and OMD \citet{ghai2022non} to establish that  in certain problem geometries (Assumptions \ref{A: Geometric relation between phi and q}-\ref{A: Properties of mapping q}), D-Safe-OGD can be approximated by a distributed OMD algorithm applied to a reparameterized ``convex" problem. We prove that the dynamic regret is upper bounded by $O(T^{2/3}{\color{black} \sqrt{\log T}}+T^{2/3}C_T^*)$ in \cref{T: Dynamic regret bound (non-convex)}, which is the first dynamic regret bound for a {\it safe distributed} algorithm in the {\it non-convex} setting. If the problem is centralized (single agent) and the comparator sequence is fixed, i.e., $\xb^*_t=\xb$, our result recovers the static regret bound of \citet{ghai2022non}.
\end{itemize}
The proofs of our results are provided in the Appendix.

\section{Related Literature}\label{sec:lit}

\paragraph{I) Centralized Online Optimization:} For static regret, it is well-known that the optimal regret bound is $O(\sqrt{T})$ (respectively, $O(\log T)$) for convex (respectively, exp-concave and strongly convex) problems \cite{hazan2016introduction}. For dynamic regret, various regularity measures have been considered. A commonly used measure is the \emph{path-length} with respect to a general comparator sequence $\{\ub_t\}_{t=1}^T$, defined as
\begin{equation}\label{eq: path length}
    C_T(\ub_1,\ldots,\ub_T)\triangleq \sum_{t=2}^T\norm{\ub_t-\ub_{t-1}}.
\end{equation}
\citet{zinkevich2003online} first showed that for convex functions, OGD attains an upper bound of $O(\sqrt{T}(1+C_T))$ for dynamic regret, later improved to $O(\sqrt{T(1+C_T)})$ using expert advice \cite{zhang2018adaptive}. For strongly convex and smooth functions, \citet{mokhtari2016online} showed a regret bound of $O(C^*_T)$ for OGD. The notion of higher-order path-length $C_{p,T}^*\triangleq \sum_{t=2}^T\norm{\xb_t^*-\xb_{t-1}^*}^p$ has also been considered by several works. When the minimizer sequence $\{\xb^*_t\}_{t=1}^T$ is uniformly bounded, $O(C_{p,T}^*)$ implies $O(C_{q,T}^*)$ for $q<p$. \citet{zhang2017improved} proved that with multiple gradient queries per round, the dynamic regret is improved to $O(\min\{C^*_T,C^*_{2,T}\})$.

Other regularity measures include the function variation $V_T\triangleq \sum_{t=2}^T \sup_{\xb \in \Xc} |f_t(\xb)-f_{t-1}(\xb)|$ \cite{besbes2015non}, the predictive path-length $C^\prime_T(\ub_1,\ldots,\ub_T)\triangleq\sum_{t=2}^T\norm{\ub_t - \Psi_t(\ub_{t-1})}$ \cite{hall2013dynamical}, where $\Psi_t$ is a given dynamics, and the gradient variation $D_T\triangleq \sum_{t=1}^T \norm{\nabla f_t(\xb_t)-\mb_t}^2$ \cite{rakhlin2013online}, where $\mb_t$ is a predictable sequence computed by the learner. \citet{besbes2015non} proposed a restarting OGD and showed that when the learner only has access to noisy gradients, the expected dynamic regret is bounded by $O(T^{2/3}(V_T+1)^{1/3})$ and $O(\sqrt{T(1+V_T)})$ for convex and strongly convex functions, respectively. The above measures are not directly comparable to each other. In this regard, \citet{jadbabaie2015online} provided a dynamic regret bound in terms of $C^*_T$, $D_T$ and $V_T$ for the adaptive optimistic OMD algorithm. Also, \cite{chang2021online} revisited OGD with multiple queries in the unconstrained setup and established the regret bound of $O(\min \{V_T,C^*_T, C^*_{2,T}\})$ for strongly convex and smooth functions. Dynamic regret has also been studied for functions with a parameterizable structure \cite{ravier2019prediction} as well as composite convex functions \cite{ajalloeian2020inexact}. 

\paragraph{II) Distributed Online Optimization:}
\citet{yan2012distributed} studied distributed OGD for online optimization and proved a static regret bound of $O(\sqrt{T})$ (respectively, $O(\log T)$) for convex (respectively, strongly-convex) functions. Distributed online optimization for time-varying network structures was then considered in \cite{mateos2014distributed,akbari2015distributed,hosseini2016online}. \citet{shahrampour2018distributed} proposed a distributed OMD algorithm with a dynamic regret bound of $O(\sqrt{T}(1+C^*_T))$. \citet{dixit2019distributed} considered time-varying network structures and showed that distributed proximal OGD achieves a dynamic regret of $O(\log T(1+C^*_T))$ for strongly convex functions. \citet{zhang2019distributed} developed a method based on gradient tracking and derived a regret bound in terms of $C^*_T$ and a gradient path-length. More recently, \cite{eshraghi2022improving} showed that dynamic regret for strongly convex and smooth functions can be improved to $O(1+C^*_T)$ using both primal and dual information boosted with multiple consensus steps. The non-convex case is also recently studied in \cite{lu2021online}, where the regret is characterized by first-order optimality condition. 

Nevertheless, the works mentioned in {\bf (I)} and {\bf (II)} do not consider neither long-term nor safety constraints, which are discussed next.

\begin{table*}[h!]
\caption{Related works on centralized and distributed constrained online optimization for general functions with regret and constraint violation (CV) guarantees. Let $g(\xb) = (g_1(\xb), g_2(\xb),\dots,g_n(\xb))^\top$ is vector formed by $n$ convex constraints. Let $\beta \in (0,1)$, $\alpha_0 > 1$ and $[a]_{+} = \max\{0,a\}$. Problem type `C' stands for centralized, `D' stands for distributed (or decentralized), `CNX' stands for convex cost functions, and `N-CNX' stands for non-convex cost functions. Notes: $\dagger:$ The CV bound in \cite{yu2020low} can be further reduced to $O(1)$ under a Slater's condition assumption.}
\centering
\resizebox{2.05\columnwidth}{!}{\begin{tabular}{||c c c c c c|| } 
 \hline
 CV Type  & Problem & Reference & Regret Type & Regret Bound & CV Bound\\ 
 \hline
 \hline
 $\sum_{t=1}^T g_i\left(\xb_t\right)~\forall i\in[n]$  & C, CNX & \cite{mahdavi2012trading} & Static & $O(\sqrt{T})$ & $O(T^{3/4})$\\
 $\sum_{t=1}^T \max_{i\in[n]} g_i\left(\xb_t\right) $  & C, CNX & \cite{jenatton2016adaptive} & Static & $O(T^{\max\{\beta, 1-\beta\}})$ & $O(T^{1-\beta/2})$\\
 $\sum_{t=1}^T\left[g_i\left(\xb_t\right)\right]_{+}~\forall i\in[n]$  & C, CNX & \cite{yuan2018online} & Static & $O(T^{\max\{\beta, 1-\beta\}})$ & $O(T^{1-\beta/2})$\\
 $ \sum_{t=1}^T\|\left[g\left(\xb_t\right)\right]_+\|$  & C, CNX & \cite{yi2021regret} & Static & $O(T^{\max\{\beta, 1-\beta\}})$ & $O(T^{(1-\beta)/2})$\\
 $ \sum_{t=1}^T\|\left[g\left(\xb_t\right)\right]_+\|$  & C, CNX & \cite{yi2021regret} & Dynamic & $O(\sqrt{T(1+C_T)})$ & $O(\sqrt{T})$\\
 $\sum_{t=1}^T g_i\left(\xb_t\right)~\forall i\in[n]$  & C, CNX & \cite{yu2020low} & Static & $O(\sqrt{T})$ & $O(T^{1/4})^{\dagger}$\\
 $\left[\sum_{t=1}^T g_i\left(\xb_{j, t}\right)\right]_{+},~i=1,\forall j\in[m]$ & D, CNX & \cite{yuan2017adaptive} & Static & $O(T^{1/2+\beta/2})$ & $O(T^{1-\beta/4})$\\
 $\sum_{t=1}^T \sum_{j=1}^m \sum_{i=1}^n\left[\ab_i^\top\mathbf{\xb}_{j,t}\right]_{+}$ & D, CNX & \cite{yuan2020distributed} & Static & $O(\sqrt{T})$ & $O(T^{3/4})$\\
 $\sum_{t=1}^T \sum_{j=1}^m \sum_{i=1}^n\left[g_i\left(\mathbf{\xb}_{j,t}\right)\right]_{+}$  & D, CNX & \cite{yuan2021distributed} & Static & $O(T^{\max\{\beta, 1-\beta\}})$ & $O(T^{1-\beta/2})$\\
 $\frac{1}{m} \sum_{i=1}^m \sum_{t=1}^T\left\|\left[g_t\left(\xb_{i, t}\right)\right]_{+}\right\|$ & D, CNX & \cite{yi2022regret} & {Dynamic} & $O((\alpha_0^2 T^{1-\beta} + T^\beta(1+C_T))/\alpha_0)$ & $O(\sqrt{(\alpha_0+1)T^{2-\beta}})$\\
  $\sum_{t=1}^T \|\Ab \xb_{t} - \bb\| $ & C, CNX & \cite{chaudhary2021safe} & Static & $O(\sqrt{\log(T)}T^{2/3})$ & 0\\
  \hline
  $\sum_{t=1}^T \sum_{j=1}^m \|\Ab \xb_{j,t} - \bb\| $ & D, CNX & \textbf{This work} & Dynamic & $O\left(T^{2 / 3}{\color{black} \sqrt{\log T}}+T^{1 / 3} C_T^*\right)$ & 0\\
  $\sum_{t=1}^T \sum_{j=1}^m \|\Ab \xb_{j,t} - \bb\| $ & D, N-CNX & \textbf{This work} & Dynamic & $O\left(T^{2 / 3}{\color{black} \sqrt{\log T}}+T^{2 / 3} C_T^*\right)$ & 0\\
 \hline\hline
\end{tabular}}
\label{table:Dynamic regret bounds of related works 1}
\end{table*}

\paragraph{III) Constrained Online Optimization:}
Practical systems come with inherent system imposed constraints on the decision variable. Some examples of such constraints are inventory/budget constraints in one-way trading problem \cite{lin2019competitive} and time coupling constraints in networked distributed energy systems \cite{fan2020online}. For a known constraint set, projecting the decisions back to the constraint set is a natural way to incorporate constraints in OCO with projected OGD for general cost functions achieving $O(\sqrt{T})$ static regret. However, for complex constraints, projection can induce computational burden. An early work by \cite{hazan2012projection} in this space, solves the constrained optimization problem by replacing the quadratic convex program with simpler linear program using Frank-Wolfe. For understanding the following references better, let $\mathcal{X} = \{\xb \in \mathrm{X} \subseteq \mathrm{R}^d: g(\xb) \leq \mathbf{0}_n\}$ where $g(\xb) = (g_1(\xb), g_2(\xb),\dots,g_n(\xb))^\top$ is vector formed by $n$ convex constraints, and $\mathrm{X}$ is a closed convex set. The work by \cite{mahdavi2012trading} proposed to use a simpler closed form projection in place of true desired projection attaining $O(\sqrt{T})$ static regret with $\sum_{t=1}^T g_i\left(\xb_t\right) \leq O(T^{3/4})$ constraint violation $\forall i\in[n]$. Thus, their method achieves optimal regret with lesser computation burden at the cost incurring constraint violations. The follow up work by \cite{jenatton2016adaptive}, proposed adaptive step size variant of \cite{mahdavi2012trading} with $O\left(T^{\max \{\beta, 1-\beta\}}\right)$ static regret and $O\left(T^{ 1-\beta/2}\right)$ constraint violation for $\beta \in (0,1)$. These bounds were further improved in \cite{yu2020low} with a static regret bound of $O(\sqrt{T})$ and constraint violation bound of $O(T^{1/4})$. Here, the constraint violation is further reduced to $O(1)$ when $g_i(\xb)$ satisfy Slater condition. {The work by \cite{yuan2018online} considers stricter `cumulative' constraint violations of the form $
\sum_{t=1}^T\left[g_i\left(\xb_t\right)\right]_+~\forall i\in[n]$ and proposes algorithms with $O\left(T^{\max \{\beta, 1-\beta\}}\right)$ static regret and $O\left(T^{ 1-\beta/2}\right)$ `cumulative' constraint violation for $\beta \in (0,1)$.} For strongly convex functions, \cite{yuan2018online} gives $O(\log(T))$ static regret and constraint violations of respective forms is $O(\sqrt{\log(T)T})$. 
More recently, the work by \cite{yi2021regret} proposed an algorithm with $O\left(T^{\max \{\beta, 1-\beta\}}\right)$ regret and { $\sum_{t=1}^T\left\|\left[g\left(\xb_t\right)\right]_{+}\right\| \leq O\left(T^{(1-\beta)/2}\right)$} `cumulative' constraint violation. For strongly convex functions, \cite{yi2021regret} reduce both static regret and constraint violation bounds to $O(\log(T))$. Further, \cite{yi2021regret} give dynamic bounds of $O(\sqrt{T(1+C_T)})$ for regret and $O(\sqrt{T})$ `cumulative' constraint violation. The algorithms in \cite{mahdavi2012trading, jenatton2016adaptive, yu2020low, yuan2018online, yi2021regret} employ some flavour of online primal-dual algorithms. A series of recent works \cite{sun2017safety, chen2017online, neely2017online, yu2017online, cao2018online, liu2022simultaneously} have also dealt with time-varying constraints. \cite{yu2017online} specifically works with `stochastic' time varying constraints. 

More recently works in \cite{yuan2017adaptive, yuan2020distributed, yuan2021distributed, yi2022regret} have looked at distributed OCO with long term constraints. The work by \cite{yuan2017adaptive} proposes a consensus based primal-dual sub-gradient algorithm with $O(T^{1/2 + \beta_0})$ regret and $O(T^{1-\beta_0/2})$ constraint violation for $\beta_0 \in (0,0.5)$. Single constraint function is considered in \cite{yuan2017adaptive} and constraint violation is of the form $[\sum_{t=1}^T g_i\left(\xb_{j, t}\right)]_{+},~i=1,\forall j\in[m]$. In \cite{yuan2020distributed}, the authors propose algorithms for distributed online linear regression with $O(\sqrt{T})$ regret and $O(T^{3/4})$ constraint violation. Here, constraint violation takes the form $\sum_{t=1}^T \sum_{j=1}^m \sum_{i=1}^n\left[\ab_i^\top\mathbf{\xb}_{j,t}\right]_{+}$, where $\ab_i, i\in[n]$ is a constraint vector. Another primal-dual algorithm is presented in \cite{yuan2021distributed} with $O(T^{\max\{1-\beta, \beta\}})$ regret and $O(T^{1-\beta/2})$ constraint violation of the form $\sum_{t=1}^T \sum_{j=1}^m \sum_{i=1}^n\left[g_i\left(\mathbf{\xb}_{j,t}\right)\right]_{+}$ for $\beta \in (0,1)$. In all of \cite{yuan2017adaptive, yuan2020distributed, yuan2021distributed} constraint functions are known a priori. Most recently, \cite{yi2022regret} propose algorithms for distributed OCO with time-varying constraints, and for stricter `network' constraint violation metric of the form $\frac{1}{m} \sum_{i=1}^m \sum_{t=1}^T\left\|\left[g_t\left(\xb_{i, t}\right)\right]_{+}\right\|$. The algorithm in \cite{yi2022regret} gives a dynamic regret of $O((\alpha_0^2 T^{1-\beta} + T^\beta(1+C_T))/\alpha_0)$ with $O(\sqrt{(\alpha_0+1)T^{2-\beta}})$ constraint violation for $\alpha_0>1$ and $\beta \in (0,1)$. Additionally, constrained distributed OCO with coupled inequality constraints is considered in \cite{yi2020distributed, yi2021distributed}; with bandit feedback on cost function is considered in \cite{li2020online}; with partial constraint feedback in studied in \cite{sharma2021distributed}. For more references in this problem space, please see the survey in \cite{li2022survey}. 

\paragraph{IV) Safe Online Optimization:} Safe optimization is a fairly nascent field with only a few works studying per-time safety in optimization problems. In \cite{amani2019linear, khezeli2020safe} study the problem of safe linear bandits giving $O(\log(T)\sqrt{T})$ regret with no constraint violation, albeit under an assumption that a lower bound on the distance between the optimal action and safe set's boundary is known. Without knowledge such a lower bound, \cite{amani2019linear} show $O(\log(T)T^{2/3})$ regret. Safe convex and non-convex optimization is studied in \cite{usmanova2019safe, fereydounian2020safe}.
Safety in the context of online convex optimization is studied in \cite{chaudhary2021safe} with a regret of {$O(\sqrt{\log(T)}T^{2/3})$}. 

\begin{remark}
Different from the works listed above, we study the problem of safe \textit{distributed} online optimization with unknown linear constraints. We consider both convex and \textit{non-convex} cost functions. 
\end{remark}
\section{Preliminaries}
\subsection{Notation}
{\small
\begin{tabular}{|c||l|}
    \hline
    $[m]$ & The set $\{1,2,\ldots,m\}$ for any integer $m$ \\
    \hline
    $\norm{\cdot}_F$ & Frobenius norm of a matrix\\
    \hline
    $\norm{\cdot}_{\Vb}$ & $\sqrt{\xb^{\top}\Vb\xb}$, for $\xb\in \mathrm{R}^d$ and a positive-definite $\Vb$\\
    \hline
    $\Pi_{\Xc}[\cdot]$ & The operator for the projection to set $\Xc$ \\
    \hline
    $[\Ab]_{ij}$ & The entry in the $i$-th row and $j$-th column of $\Ab$\\
    \hline
    $[\Ab]_{:,j}$ & The $j$-th column of $\Ab$\\
    \hline
    $\mathbf{1}$ & The vector of all ones\\
    \hline
    $\eb_i$ & The $i$-th basis vector\\
    \hline
    $J_f(\xb)$ & The Jacobian of a mapping $f(\cdot)$ at $\xb$\\
    \hline
\end{tabular}}
\subsection{Strong Convexity and Bregman Divergence}
\begin{definition}
A function $f:\Xc\to \mathrm{R}$ is $\mu$-strongly convex ($\mu>0$) over the convex set $\Xc$ if $$f(\xb)\geq f(\yb)+\nabla f(\yb)^\top(\xb-\yb) +\frac{\mu}{2}\norm{\xb-\yb}^2,\: \forall\xb,\yb \in \Xc.$$
\end{definition}
\begin{definition}
For a strongly convex function $\phi(\cdot)$, the Bregman divergence w.r.t. $\phi(\cdot)$ over $\Xc$ is defined as
$$\mathcal{D}_{\phi}(\xb,\yb)\triangleq \phi(\xb) - \phi(\yb) - \nabla \phi(\yb)^{\top}(\xb - \yb), \xb,\yb\in\Xc.$$
\end{definition}

\subsection{Network Structure}\label{sec:net}
The underlying network topology is governed by a symmetric doubly stochastic matrix $\Pb$, i.e., $[\Pb]_{ij}\geq 0, \forall i,j \in [m]$, and each row (or column) is summed to one. If $[\Pb]_{ij}> 0$, agents $i$ and $j$ are considered neighbors, and agent $i$ assigns the weight $[\Pb]_{ij}$  to agent $j$ when they communicate with each other. We assume that the graph structure captured by $\Pb$ is connected, i.e., there is a (potentially multi-hop) path from any agent $i$ to another agent $j\neq i$. Each agent is considered as a neighbor of itself, i.e., $[\Pb]_{ii}> 0$ for any $i\in [m]$. These constraints on the communication protocol implies a geometric mixing bound for $\Pb$ \cite{liu2008monte}, such that $\sum_{j=1}^m \left|[\Pb^k]_{ji}-1/m\right|\leq \sqrt{m}\beta^k,$ for any $i\in [m]$,
where $\beta$ is the second largest singular value of $\Pb$. For a connected graph, $D_G$, the graph diameter, is defined as the number of the links of the longest of the shortest paths connecting any two agents ($D_G$ is at most $m-1$).

\begin{remark}
In all of the algorithms proposed in the paper, we will see $\Pb$ as an input. This does not contradict the decentralized nature of the algorithms, as agent $i$ only requires the knowledge of $[\Pb]_{ji}>0$ for any $j$ in its neighborhood. The knowledge of $\Pb$ is not global, and each agent only has local information about it.
\end{remark}
\section{Safe Set Estimation}
In our problem setup, the algorithm receives noisy observations of the form $\hat{\xb}_{i,t}= \Ab\xb_{i,t} + \wb_{i,t}~\forall i\in[m]$ at every timestep $t$, where the nature of noise $\wb_{i,t}$ is described below. Here $\Ab \in \mathrm{R}^{n \times d}, \bb \in \mathrm{R}^{n}$ and $n$ is the number of constraints. Note that all agent updates are synchronous. 

\subsection{Assumptions}
To study the problem of safe distributed online optimization, we make the following assumptions common to both the convex and the non-convex problem settings. 
\begin{assumption}\label{A1} The set  $\mathcal{X}^s$ is a closed polytope, hence,  convex and compact. Also, $\norm{\xb} \leq L, \forall \xb \in \mathcal{X}^s$, and $\max_{i \in [n]} \norm{\ab_{i}}_{2} \leq L_{A}$, where $\ab_{i}$ denotes the $i$-th row of $\Ab$. 
\end{assumption}

\begin{assumption}\label{A2} The constraint noise sequence $\{\wb_{i,t}, t \in [T] \}$ is $R$-sub-Gaussian with respect to a filtration $\{\mathcal{F}_{i,t}, t \in [T]\}$, i.e., $\forall t \in [T], \forall i \in [m]$, $\mathbb{E}[\wb_{i,t}|\mathcal{F}_{i,t-1}] = 0$ and for any $\xb\in\mathrm{R}^d$, $\langle\xb, \wb_{i,t}\rangle$ is $R\norm{\xb}$ sub-Gaussian:
\begin{equation*}
    \mathbb{E}[\exp(\sigma \xb^{\top}\wb_{i,t})\,|\,\mathcal{F}_{t-1}] \leq \exp(\sigma^2 (R\norm{\xb})^2/2).   
\end{equation*}
\end{assumption}

\begin{assumption}\label{A3}
    Every agent has knowledge of a \textit{safe baseline action} $\xb^{s} \in \mathcal{X}^{s}$ such that $\Ab \xb^{s}  =\bb^{s} < \bb$. The agents are aware of $\xb^{s}$ and $\bb^{s}$ and thus, the safety gap $\Delta^{s} = \min_{i\in [n]} (b_{i} - b^{s}_{i})$, where $b_i$ (respectively, $b^s_i$) denotes the $i$-th element of $\bb$ (respectively, $\bb^s$).
\end{assumption}

The first two assumptions are typical to online optimization. The third assumption stems from the requirement to be absolutely safe at every time step. The assumption warrants need for a safe starting point which is readily available in most practical problems of interest. Similar assumptions can be found in previous literature on safe linear bandits \cite{amani2019linear, khezeli2020safe}, safe convex and non-convex optimization \cite{usmanova2019safe, fereydounian2020safe}, and safe online convex optimization \cite{chaudhary2021safe}. 

\subsection{Explore and Estimate}
In this section we present an algorithmic subroutine, Alg. \ref{alg:Constraint estimation}, for agents to obtain sufficiently good local estimates of $\mathcal{X}^s$ before beginning to perform OGD. For the first $T_0$ timesteps, each agent safely explores around the baseline action $\xb^s$. Each exploratory action is a $\gamma$-weighted combination of the baseline action and an i.i.d random vector $\zeta_{i,t}$. Here, for the agent $i \in [m]$ at timestep $t \in [T_0]$, $\gamma \in [0,1)$ and $\zeta_{i,t}$ is zero mean i.i.d random vector with $\|\zeta_{i,t}\| \leq L$ and $Cov(\zeta_{i,t}) = \sigma_{\zeta}^2 \Ib.$ Performing exploration in this manner ensures per time step safety requirement as noted in the Lemma \ref{L:safe_exploration}. 

\begin{lemma}\label{L:safe_exploration}
    Let Assumptions \ref{A1} and \ref{A3} hold. With $\gamma=\frac{\Delta^s}{LL_A}$, $\Ab \xb_{i,t} \leq \bb$ for each $\xb_{i,t}= (1-\gamma)\xb^s + \gamma\zeta_{i,t}~\forall i \in [1,m], t \in [1,T_0]$.  
\end{lemma}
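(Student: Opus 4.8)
The plan is to establish the vector inequality $\Ab\xb_{i,t}\le\bb$ coordinate by coordinate, and to do so \emph{deterministically}: the exploration design guarantees $\norm{\zeta_{i,t}}\le L$ for \emph{every} realization, so no concentration or high-probability argument is needed here, in contrast to the estimation-error analysis that comes later. Accordingly, I fix an agent $i\in[m]$, a round $t\in[T_0]$, and a constraint index $k\in[n]$, writing $\ab_k$ for the $k$-th row of $\Ab$ and $b_k,b^s_k$ for the $k$-th entries of $\bb,\bb^s$. It then suffices to prove $\ab_k^\top\xb_{i,t}\le b_k$ for each such $k$.

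First I would substitute $\xb_{i,t}=(1-\gamma)\xb^s+\gamma\zeta_{i,t}$ and use $\Ab\xb^s=\bb^s$ from \cref{A3} to linearize the left-hand side,
\begin{equation*}
\ab_k^\top\xb_{i,t}=(1-\gamma)\,\ab_k^\top\xb^s+\gamma\,\ab_k^\top\zeta_{i,t}=(1-\gamma)\,b^s_k+\gamma\,\ab_k^\top\zeta_{i,t}.
\end{equation*}
The only term not already controlled by the safe baseline is $\gamma\,\ab_k^\top\zeta_{i,t}$, which I would bound by Cauchy--Schwarz together with the uniform norm bounds of \cref{A1}: $\ab_k^\top\zeta_{i,t}\le\norm{\ab_k}\,\norm{\zeta_{i,t}}\le L_A L$ (and, in the same spirit, $b^s_k=\ab_k^\top\xb^s\le L_A L$ to control the sign of the coefficient of $\gamma$). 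This reduces the claim to the scalar inequality $(1-\gamma)b^s_k+\gamma L_A L\le b_k$, equivalently $\gamma\,(L_A L-b^s_k)\le b_k-b^s_k$.

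The decisive step is the calibration of the exploration weight: the choice $\gamma=\Delta^s/(L L_A)$ makes $\gamma L_A L=\Delta^s$, and by the definition of the safety gap in \cref{A3} we have $b_k-b^s_k\ge\Delta^s$ for every $k$, so the $\gamma$-scaled perturbation is dominated by the per-coordinate slack $b_k-b^s_k$ and the coordinate inequality closes. Taking the minimum over $k$ is exactly what $\Delta^s=\min_{k\in[n]}(b_k-b^s_k)$ encodes, and since $L$ and $L_A$ are uniform over agents, rounds, and constraints, the same bound holds for all $i\in[m]$ and $t\in[T_0]$ simultaneously, yielding $\Ab\xb_{i,t}\le\bb$. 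I expect the main obstacle to be precisely this constant matching rather than any structural difficulty: the crude bound $\norm{\zeta_{i,t}-\xb^s}\le 2L$ obtained by lumping $\xb^s$ and $\zeta_{i,t}$ together loses a factor of two and would force a smaller $\gamma$, so the argument must keep the baseline term $(1-\gamma)b^s_k$ separated from the perturbation term and exploit $b^s_k=\ab_k^\top\xb^s$ directly, so that the stated value $\gamma=\Delta^s/(L L_A)$ is exactly what is needed.
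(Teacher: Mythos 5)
Your proposal is correct and takes essentially the same route as the paper: the paper's own argument (it appears inline as the first display in the proof of \cref{T: Dynamic regret bound}) uses the identical row-wise decomposition $\ab_k^\top\xb_{i,t}=(1-\gamma)b^s_k+\gamma\,\ab_k^\top\zeta_{i,t}$, the same Cauchy--Schwarz bound $\gamma\,\ab_k^\top\zeta_{i,t}\le\gamma L_AL=\Delta^s$, and the same final step $\Delta^s\le b_k-b^s_k$. One shared caveat worth noting (it is present in the paper's chain as well, not introduced by you): the concluding inequality $(1-\gamma)b^s_k+(b_k-b^s_k)\le b_k$ silently uses $b^s_k\ge 0$, which is not guaranteed by \cref{A1} or \cref{A3}.
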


Once the exploration phase is finished, agents construct local functions $l_i(\Ab)$ of the form $l_i(\Ab)\triangleq\sum_{t=1}^{T_0}\norm{\Ab\xb_{i,t} - \hat{\xb}_{i,t}}^2 + \frac{\lambda}{m}\norm{\Ab}_F^2.$ Then, for timesteps $t \in [T_0+1, T_0+T_1]$, Alg. \textbf{EXTRA} \cite{shi2015extra} is used to solve the global Least Squares (LS) estimation problem $\sum_{i=1}^m l_i(\Ab)$ in a distributed fashion. $\alpha$ is chosen based on EXTRA. 

\begin{lemma}\label{L: Bound for estimation error}
Suppose Assumptions \ref{A1} and \ref{A2} hold. By running Algorithm \ref{alg:Constraint estimation} with $T_0 = \Theta(\frac{L^2}{m\gamma^2\sigma_{\zeta}^2}\log (\frac{d}{\delta}))$ for data collection and $T_1=O(\log T^{\rho})$ where $\rho$ is a positive constant, then with probability at least $(1-2\delta)$, we have $\forall k \in [n]$ and $\forall i,j \in[m]$
\begin{equation}
\begin{split}
    \norm{\hat{\ab}^i_{k}-\ab_k} &\leq \frac{1}{T^{\rho}} + \frac{R\sqrt{d\log \big(\frac{1+mT_0L^2/\lambda}{\delta/n}\big)} + \sqrt{\lambda}L_A}{\sqrt{m\gamma^2\sigma_{\zeta}^2T_0}},\\
    \norm{\hat{\ab}^i_{k}-\hat{\ab}^j_{k}} &\leq \frac{2}{T^{\rho}},
\end{split}
\end{equation}
where $\hat{\ab}^i_{k}$ and $\ab_k$ are the $k$-th rows of agent $i$'s estimate and the true parameters $\Ab$, respectively.
\end{lemma}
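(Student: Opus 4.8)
The plan is to relate the distributed EXTRA iterates to the exact centralized ridge estimator and then bound the optimization error and the statistical error separately. First I would note that the aggregate objective $\sum_{i=1}^m l_i(\Ab)=\sum_{i,t}\norm{\Ab\xb_{i,t}-\hat{\xb}_{i,t}}^2+\lambda\norm{\Ab}_F^2$ is a $\lambda$-regularized least-squares problem (the per-agent $\frac{\lambda}{m}\norm{\Ab}_F^2$ sum to $\lambda\norm{\Ab}_F^2$) that decouples across the rows of $\Ab$. Its unique minimizer $\bar{\Ab}$ has $k$-th row $\bar{\ab}_k=\Vb^{-1}\sum_{i,t}\xb_{i,t}\hat{x}_{k,i,t}$, where $\Vb\triangleq\sum_{i,t}\xb_{i,t}\xb_{i,t}^\top+\lambda\Ib$ is the regularized Gram matrix and $\hat{x}_{k,i,t}$ is the $k$-th coordinate of $\hat{\xb}_{i,t}$. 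I would then use the triangle inequality $\norm{\hat{\ab}^i_k-\ab_k}\le\norm{\hat{\ab}^i_k-\bar{\ab}_k}+\norm{\bar{\ab}_k-\ab_k}$ and treat the two terms independently.

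For the statistical term $\norm{\bar{\ab}_k-\ab_k}$ I would invoke a self-normalized concentration (confidence-ellipsoid) inequality for regularized least squares, which applies under Assumption \ref{A2}: stacking the $m$ agents' synchronous observations into a single sequence, the noise coordinates $\langle\eb_k,\wb_{i,t}\rangle$ form a conditionally $R$-sub-Gaussian martingale-difference sequence while the exploratory actions $\xb_{i,t}$ are predictable. This yields, with probability at least $1-\delta/n$, the bound $\norm{\bar{\ab}_k-\ab_k}_{\Vb}\le R\sqrt{d\log((1+mT_0L^2/\lambda)/(\delta/n))}+\sqrt{\lambda}\norm{\ab_k}$, using $\norm{\xb_{i,t}}\le L$. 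Bounding $\norm{\ab_k}\le L_A$ via Assumption \ref{A1} and converting the $\Vb$-norm through $\norm{\bar{\ab}_k-\ab_k}\le\norm{\bar{\ab}_k-\ab_k}_{\Vb}/\sqrt{\lambda_{\min}(\Vb)}$ produces the second summand, provided $\lambda_{\min}(\Vb)$ is controlled from below.

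The key step, and the one I expect to be the main obstacle, is the lower bound on $\lambda_{\min}(\Vb)$, which is exactly where the prescribed $T_0$ enters. Since $\zeta_{i,t}$ is zero-mean with covariance $\sigma_\zeta^2\Ib$ and independent of the noise, $\E[\xb_{i,t}\xb_{i,t}^\top]=(1-\gamma)^2\xb^s(\xb^s)^\top+\gamma^2\sigma_\zeta^2\Ib\succeq\gamma^2\sigma_\zeta^2\Ib$. Summing the $mT_0$ independent bounded rank-one terms and applying a matrix Chernoff bound, the choice $T_0=\Theta(\frac{L^2}{m\gamma^2\sigma_\zeta^2}\log(\frac{d}{\delta}))$ guarantees, with probability at least $1-\delta$, that $\lambda_{\min}(\Vb)=\Omega(m\gamma^2\sigma_\zeta^2T_0)$; substituting this into the previous display gives precisely the stated statistical term with denominator $\sqrt{m\gamma^2\sigma_\zeta^2T_0}$. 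The difficulty here is twofold: exploiting that the full-rank component $\gamma^2\sigma_\zeta^2\Ib$ of the expected design survives alongside the rank-one baseline contribution, and establishing a deviation bound that holds uniformly across all $mT_0$ bounded samples.

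Finally, for the optimization error $\norm{\hat{\ab}^i_k-\bar{\ab}_k}$ and the consensus error $\norm{\hat{\ab}^i_k-\hat{\ab}^j_k}$, I would invoke the linear (geometric) convergence of EXTRA \cite{shi2015extra} applied to the strongly convex, smooth objective $\sum_i l_i(\Ab)$, with strong convexity supplied by the $\lambda\norm{\Ab}_F^2$ term. Geometric contraction of the iterate error means that running $T_1=O(\log T^{\rho})$ iterations drives both the distance of each local iterate to the common minimizer $\bar{\ab}_k$ and the inter-agent disagreement below $1/T^{\rho}$; the consensus bound $\norm{\hat{\ab}^i_k-\hat{\ab}^j_k}\le 2/T^{\rho}$ then follows by the triangle inequality through $\bar{\ab}_k$. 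Combining the optimization and statistical terms and taking a union bound over the $n$ rows (total probability cost $\delta$) together with the minimum-eigenvalue event (cost $\delta$) yields both claimed bounds with probability at least $1-2\delta$.
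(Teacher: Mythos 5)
Your proposal follows essentially the same route as the paper's proof: a triangle-inequality decomposition through the centralized ridge minimizer, the self-normalized confidence ellipsoid of \cite{abbasi2011improved} for the statistical term, a matrix Chernoff bound exploiting $\E[\xb_{i,t}\xb_{i,t}^\top]\succeq\gamma^2\sigma_\zeta^2\Ib$ to lower-bound $\lambda_{\min}(\Vb)$, and the geometric convergence of EXTRA for the optimization and consensus errors. The argument and the union-bound accounting are correct and match the paper's.
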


\begin{algorithm}[!h]
\caption{Distributed Constraint Parameter Estimation}
\label{alg:Constraint estimation}
\begin{algorithmic}[1]
    \STATE {\bf Require:} number of agents $m$, doubly stochastic matrix $\Pb\in \mathrm{R}^{m\times m}$, $\Tilde{\Pb}\triangleq\frac{\Ib+\Pb}{2}$, hyper-parameters $\alpha$, $\gamma$ and $\lambda$, data-collection duration $T_0$, constraint-estimation duration $T_1$, a strictly feasible point $\xb^s$.
    \STATE {\bf Explore around baseline action:}
    \FOR{$t=1,2,\ldots,T_0$}
        \FOR{$i=1,2,\ldots,m$} 
            \STATE Select action $\xb_{i,t}= (1-\gamma)\xb^s + \gamma\zeta_{i,t}$
            \STATE Receive noisy observation $\hat{\xb}_{i,t}= \Ab\xb_{i,t} + \wb_{i,t}$
        \ENDFOR   
    \ENDFOR
    \STATE {\bf Form local functions using explored data:}
    \begin{equation*}
      l_i(\Ab)\triangleq\sum_{t=1}^{T_0}\norm{\Ab\xb_{i,t} - \hat{\xb}_{i,t}}^2 + \frac{\lambda}{m}\norm{\Ab}_F^2.
    \end{equation*}
    \STATE \textbf{Use EXTRA} \cite{shi2015extra} \textbf{to solve global LS problem $\sum_{i=1}^m l_i(\Ab)$:}
    \STATE Randomly generate $\widehat{\Ab}^0_i \in \mathrm{R}^{n \times d}$ for all $i\in [m]$. 
    \STATE $\forall i\in [m],\:\widehat{\Ab}^{T_0+1}_i = \sum_{j=1}^m [\Pb]_{ji}\widehat{\Ab}^0_j - \alpha\nabla l_i(\widehat{\Ab}^0_i).$
    \FOR{$t=T_0,\ldots,T_0+T_1-2$}
        \FOR{$i=1,2,\ldots,m$}
            \STATE $\widehat{\Ab}^{t+2}_i = \sum_{j=1}^m 2 [\Tilde{\Pb}]_{ji}\widehat{\Ab}^{t+1}_j - \sum_{j=1}^m [\Tilde{\Pb}]_{ji}\widehat{\Ab}^{t}_j - \alpha[\nabla l_i(\widehat{\Ab}^{t+1}_i)-\nabla l_i(\widehat{\Ab}^t_i)].$
        \ENDFOR
    \ENDFOR
\end{algorithmic}
\end{algorithm}

Let us now define the estimated safe sets for each agent $i \in [m]$. Let the parameter estimate for agent $i \in [m]$ returned by \textbf{EXTRA} at the end of $T_0+T_1$ timestep be denoted by $\widehat{\Ab}_{i}$. For each row $k \in [n]$ of $\widehat{\Ab}_{i}$, a ball centered at $\hat{\ab}_{k}^i$ with a radius of $\mathcal{B}_r$ can be defined as follows
\begin{equation}\label{Eq: The ball set w.r.t. an estimate}
    \mathcal{C}_{i,k}\triangleq\{\ab_k\in \mathbb{R}^d:\norm{\ab_k - \hat{\ab}_{k}^i} \leq \mathcal{B}_r\},
\end{equation}
where $\mathcal{B}_r = \frac{1}{T^{\rho}} + \frac{R\sqrt{d\log \big(\frac{1+mT_0L^2/\lambda}{\delta/n}\big)} + \sqrt{\lambda}L_A}{\sqrt{\frac{1}{2}m\gamma^2\sigma_{\zeta}^2T_0}}$. The true parameter $\ab_k$ lies inside the set $\mathcal{C}_{i,k}$ with a high probability of $(1 - \delta/n)$. 
Now, using \eqref{Eq: The ball set w.r.t. an estimate}, safe estimated set for agent $i \in [m]$ can be constructed as follows:
\begin{equation}\label{Eq: Agent's feasible set}
    \widehat{\mathcal{X}}^s_i\triangleq\{\xb\in\mathbb{R}^d: \Tilde{\ab}^{\top}_k\xb\leq \bb_k,\;\forall \Tilde{\ab}_k \in \mathcal{C}_{i,k},\;\forall k \in [n]\}.
\end{equation}


\section{Dynamic Regret Analysis for the Convex Setting}
During the $T_0+T_1$ steps in \cref{alg:Constraint estimation} agents do not expend any effort to minimize the regret. This is due to the fact that without the knowledge of the feasible set, they cannot perform any projection. In this section, we propose D-Safe-OGD, which allows agents to carry out a safe distributed online optimization, and we analyze D-Safe-OGD in the \emph{convex} setting.

D-Safe-OGD is summarized in Algorithm \ref{alg:D-safe-ogd-linear-constraint}, where in the exploration phase, all agents collaboratively estimate the constraint parameters based on Algorithm \ref{alg:Constraint estimation}, and then each agent constructs the feasible set based on its own estimate. In the optimization phase, the network applies distributed OGD, where all agents first perform gradient descent with their local gradients, and then they communicate their iterates with neighbors based on the network topology imposed by $\Pb$. We note that the projection operator of each agent is defined w.r.t. the local estimated feasible set (line 8 of \cref{alg:D-safe-ogd-linear-constraint}), thereby making the feasible sets close enough but slightly different from each other. Therefore, previous regret bounds for distributed online optimization over a common feasible set (e.g., \cite{yan2012distributed,hosseini2016online,shahrampour2018distributed,eshraghi2022improving}) are not immediately applicable. We tackle this challenge using the geometric property of linear constraints \cite{fereydounian2020safe}, and we present an upper bound on the dynamic regret in terms of the path-length regularity measure.

We adhere to the following standard assumption in the context of OCO:  
\begin{assumption}\label{A4}
    The  cost functions $f_{i,t}$ are convex  $\forall i \in [m]$ and $\forall t \in [T]$, and they have a bounded gradient, i.e., $\norm{\nabla f_{i,t}(\xb)} \leq G$ for any  $\xb\in \mathcal{X}^s$.
\end{assumption}

\begin{theorem}\label{T: Dynamic regret bound}
Suppose Assumptions \ref{A1}-\ref{A3} and \ref{A4} hold and $T=\Omega\left(\big(\frac{L^2}{m\gamma^2\sigma_{\zeta}^2}\log (\frac{d}{\delta})\big)^{3/2}\right)$. By running Algorithm \ref{alg:D-safe-ogd-linear-constraint} with $\gamma\leq \frac{\Delta^s}{LL_A}$, $\eta=\Theta(T^{-1/3})$, $T_0 = \Theta(T^{2/3})$ and $T_1 = \Theta(\log T^{\rho})$, we have with probability at least $(1-2\delta)$
\begin{equation*}
    \xb_{i,t}\in \mathcal{X}^s,\;\forall i\in [m], t\in[T],~~~\text{and}
\end{equation*}
\begin{equation*}
    \mathbf{Reg}^d_{i,T} = O\left({\color{black} \frac{\beta}{\left(1-\beta\right)}T^{2/3}\sqrt{\log T}} + T^{1/3}C^*_T\right),\;\forall i \in [m].  
\end{equation*}
\end{theorem}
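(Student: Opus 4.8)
The plan is to prove the two claims (per-step safety and the dynamic regret bound) separately, re-indexing the optimization phase to run over $t\in[T_0+T_1+1,T]$ of length $T'=T-T_0-T_1$. For \textbf{safety}, during exploration (\;$t\le T_0+T_1$\;) \cref{L:safe_exploration} already gives $\Ab\xb_{i,t}\le\bb$, since each action is the prescribed convex combination of $\xb^s$ and $\zeta_{i,t}$ with $\gamma\le\Delta^s/(LL_A)$. During optimization I would invoke \cref{L: Bound for estimation error}: with probability at least $1-2\delta$, the true row $\ab_k$ lies in every ball $\mathcal{C}_{i,k}$ of \eqref{Eq: The ball set w.r.t. an estimate}. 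By the very definition \eqref{Eq: Agent's feasible set}, any $\xb\in\widehat{\mathcal{X}}^s_i$ satisfies $\tilde{\ab}_k^\top\xb\le b_k$ for \emph{all} $\tilde{\ab}_k\in\mathcal{C}_{i,k}$; choosing $\tilde{\ab}_k=\ab_k$ gives $\Ab\xb\le\bb$, so $\widehat{\mathcal{X}}^s_i\subseteq\mathcal{X}^s$ and every projected iterate is safe.

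For the \textbf{regret}, I would first dispose of the exploration phase: each $f_t$ is $mG$-Lipschitz on the compact set $\mathcal{X}^s$, so $|f_t(\xb_{i,t})-f_t(\xb_t^*)|\le 2mGL$ and this phase contributes $O((T_0+T_1)mGL)=O(T^{2/3})$. The crux is the optimization phase. Because agent $i$ projects onto its own conservative set $\widehat{\mathcal{X}}^s_i$, the minimizer $\xb_t^*$ need not be feasible for any agent, so I cannot telescope against $\xb_t^*$ directly. To fix this I construct a single proxy comparator $\tilde\xb_t=(1-\epsilon)\xb_t^*+\epsilon\xb^s$ with $\epsilon=2\mathcal{B}_rL/\Delta^s$. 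Using $\ab_k^\top\xb_t^*\le b_k$, $\ab_k^\top\xb^s=b_k^s\le b_k-\Delta^s$, and $\norm{\tilde{\ab}_k-\ab_k}\le 2\mathcal{B}_r$ for any $\tilde{\ab}_k\in\mathcal{C}_{i,k}$, a short computation gives $\tilde{\ab}_k^\top\tilde\xb_t\le b_k-\epsilon\Delta^s+2\mathcal{B}_rL\le b_k$ for every $k$ and every agent $i$; hence $\tilde\xb_t\in\widehat{\mathcal{X}}^s_i$ for all $i$ simultaneously, while $\norm{\tilde\xb_t-\xb_t^*}\le 2L\epsilon=O(\mathcal{B}_r)$. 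This uniform feasibility is the geometric property of linear constraints that lets the near-feasible proxy serve all agents at once.

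With this in hand I split, for the network average $\bar\xb_t=\tfrac1m\sum_i\xb_{i,t}$,
$$f_t(\xb_{i,t})-f_t(\xb_t^*)=\big[f_t(\xb_{i,t})-f_t(\bar\xb_t)\big]+\big[f_t(\bar\xb_t)-f_t(\tilde\xb_t)\big]+\big[f_t(\tilde\xb_t)-f_t(\xb_t^*)\big].$$
The third (proxy-approximation) term is $\le mG\norm{\tilde\xb_t-\xb_t^*}=O(mG\mathcal{B}_r)$ per step; since $\mathcal{B}_r=O(\sqrt{\log T}/T^{1/3})$ when $T_0=\Theta(T^{2/3})$, its sum is $O(T\mathcal{B}_r)=O(T^{2/3}\sqrt{\log T})$. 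The first (consensus) term is $\le mG\norm{\xb_{i,t}-\bar\xb_t}$, and the geometric mixing bound $\sum_j|[\Pb^k]_{ji}-1/m|\le\sqrt m\,\beta^k$ yields $\norm{\xb_{i,t}-\bar\xb_t}=O(\tfrac{\beta}{1-\beta}\eta G)$, giving a time-sum of $O(\tfrac{\beta}{1-\beta}\eta G T')=O(\tfrac{\beta}{1-\beta}T^{2/3})$. For the middle term I use convexity and projection nonexpansiveness: since $\tilde\xb_t$ is feasible for every $\widehat{\mathcal{X}}^s_i$, convexity of $\norm{\cdot}^2$ gives $\norm{\bar\xb_{t+1}-\tilde\xb_t}^2\le\tfrac1m\sum_i\norm{\yb_{i,t+1}-\tilde\xb_t}^2$, where $\yb_{i,t+1}$ is the pre-projection iterate; expanding and telescoping produces the standard bound $\tfrac{1}{2\eta}\big(D^2+2D\sum_t\norm{\tilde\xb_{t+1}-\tilde\xb_t}\big)+O(\eta G^2T')$. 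Crucially, since $\epsilon$ and $\xb^s$ are time-independent, $\sum_t\norm{\tilde\xb_{t+1}-\tilde\xb_t}=(1-\epsilon)C_T^*\le C_T^*$, so with $\eta=\Theta(T^{-1/3})$ this term is $O(T^{1/3}+T^{1/3}C_T^*+T^{2/3})$.

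Adding the four contributions and substituting $\eta=\Theta(T^{-1/3})$, $T_0=\Theta(T^{2/3})$, $T_1=\Theta(\log T^\rho)$ yields $\mathbf{Reg}^d_{i,T}=O\!\big(\tfrac{\beta}{1-\beta}T^{2/3}\sqrt{\log T}+T^{1/3}C_T^*\big)$, and the hypothesis $T=\Omega((\tfrac{L^2}{m\gamma^2\sigma_\zeta^2}\log(\tfrac{d}{\delta}))^{3/2})$ guarantees $T_0=\Theta(T^{2/3})$ is large enough for \cref{L: Bound for estimation error} to hold. I expect the \emph{main obstacle} to be the middle term: because each agent projects onto a \emph{different} set, the network average is not itself a projected gradient step, so the one-line nonexpansiveness argument used for a common feasible set fails. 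The resolution — the technical heart of the proof — is the uniform feasibility of the single proxy $\tilde\xb_t$ across all $\widehat{\mathcal{X}}^s_i$, which restores per-agent nonexpansiveness and, after averaging via convexity of the squared norm, recovers the centralized telescoping up to the controllable consensus and proxy-approximation slacks above.
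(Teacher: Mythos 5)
Your proof is correct and follows the same overall architecture as the paper's: dispose of the exploration phase by Lipschitzness, replace $\xb_t^*$ by a proxy comparator that lies in \emph{every} agent's estimated set $\widehat{\mathcal{X}}^s_i$ and is $O(\mathcal{B}_r)$-close to $\xb_t^*$, and then run the standard distributed OGD telescoping against that common proxy (projection nonexpansiveness is restored precisely because the proxy is mutually feasible, and convexity of $\norm{\cdot}^2$ handles the consensus averaging). The one genuinely different ingredient is the proxy itself. The paper takes $\Tilde{\xb}^*_t=\Pi_{\mathcal{X}^s_{\textnormal{in}}}(\xb_t^*)$, the projection onto the shrunk polytope with $\tau_{\textnormal{in}}=2\mathcal{B}_rL$, and invokes Lemma~1 of \cite{fereydounian2020safe} to bound $\norm{\Tilde{\xb}^*_t-\xb_t^*}\le 2\sqrt{d}L\mathcal{B}_r/C(\Ab,\bb)$, at the cost of the opaque polytope-dependent constant $C(\Ab,\bb)$. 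You instead take the explicit convex combination $\tilde\xb_t=(1-\epsilon)\xb_t^*+\epsilon\xb^s$ with $\epsilon=2\mathcal{B}_rL/\Delta^s$, exploiting the safe baseline of Assumption~\ref{A3}; this is self-contained, yields the explicit constant $4L^2\mathcal{B}_r/\Delta^s$ in place of $2\sqrt{d}L\mathcal{B}_r/C(\Ab,\bb)$, and makes the path-length comparison exact, $\sum_t\norm{\tilde\xb_{t+1}-\tilde\xb_t}=(1-\epsilon)C_T^*$ (the paper needs nonexpansiveness of projection for the analogous step). Both routes give the same order.

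One inaccuracy worth fixing: your consensus bound $\norm{\xb_{i,t}-\bar\xb_t}=O\big(\tfrac{\beta}{1-\beta}\eta G\big)$ is not quite right when agents project onto \emph{different} sets. The averaged point $\sum_j[\Pb]_{ji}\yb_{j,t-1}$ need not lie in $\widehat{\mathcal{X}}^s_i$, so the per-step perturbation entering the mixing argument is the projection residual, which is $O(\eta G+\mathcal{B}_r)$ rather than $O(\eta G)$ (this is exactly what the paper's Lemmas \ref{L: Similarity property of different agents' feasible sets} and \ref{L: consecutive distance} quantify). The correct consensus error is $O\big(\tfrac{\beta}{1-\beta}(\eta G+\mathcal{B}_r)\big)$; its time-sum $O\big(\tfrac{\beta}{1-\beta}T\mathcal{B}_r\big)=O\big(\tfrac{\beta}{1-\beta}T^{2/3}\sqrt{\log T}\big)$ is still absorbed into the claimed bound, so your conclusion stands, but the step as written omits this term.
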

\cref{T: Dynamic regret bound} establishes a dynamic regret bound for D-Safe-OGD that is at least $O(T^{2/3}{\color{black} \sqrt{\log T}})$, and for a large enough path-length the bound becomes $O(T^{1/3}C^*_T)$. We can also see the impact of network topology through $\beta$, the second largest singular value of $\Pb$. When the network connectivity is stronger (i.e., $\beta$ is smaller), the regret bound is tighter.

\begin{corollary}
Suppose that the comparator sequence is fixed over time, i.e., $\xb^*_t = \xb^*,\; \forall t\in [T]$. Then, the individual regret bound is $O(T^{2/3})$, which recovers the static regret bound of the centralized case in \cite{chaudhary2021safe} in terms of order.
\end{corollary}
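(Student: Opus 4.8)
The plan is to specialize \cref{T: Dynamic regret bound} directly, since the corollary concerns the degenerate case in which all per-round minimizers coincide. First I would observe that a time-invariant comparator $\xb^*_t = \xb^*$ collapses the path-length: from its definition in \eqref{eq: path length start},
\begin{equation*}
    C_T^* = \sum_{t=2}^T \norm{\xb^*_t - \xb^*_{t-1}} = \sum_{t=2}^T \norm{\xb^* - \xb^*} = 0,
\end{equation*}
so the regularity term that distinguishes the dynamic bound from a static one contributes nothing. All the parameter choices ($\eta=\Theta(T^{-1/3})$, $T_0 = \Theta(T^{2/3})$, $T_1 = \Theta(\log T^{\rho})$) and the per-step safety guarantee $\xb_{i,t}\in\mathcal{X}^s$ from \cref{T: Dynamic regret bound} are inherited verbatim, as none of them depends on the comparator sequence.

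Substituting $C_T^* = 0$ into the bound of \cref{T: Dynamic regret bound} removes the path-length term $T^{1/3}C_T^*$ and leaves
\begin{equation*}
    \mathbf{Reg}^d_{i,T} = O\!\left(\frac{\beta}{1-\beta}\, T^{2/3}\sqrt{\log T}\right).
\end{equation*}
Because the network matrix $\Pb$, and hence its second-largest singular value $\beta$, is fixed throughout the horizon, the prefactor $\beta/(1-\beta)$ is an absolute constant; the bound is therefore $O(T^{2/3}\sqrt{\log T})$, i.e.\ $O(T^{2/3})$ up to the logarithmic factor that is also present in the static bound of \citet{chaudhary2021safe}.

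To close the argument I would make explicit the reduction from dynamic to static regret. The hypothesis $\xb^*_t = \xb^*$ means $\xb^*$ minimizes every instantaneous loss $f_t$ over $\mathcal{X}^s$, so it also minimizes the cumulative loss; hence $\sum_{t=1}^T f_t(\xb^*) = \min_{\xb\in\mathcal{X}^s}\sum_{t=1}^T f_t(\xb)$ and the dynamic regret \eqref{eq: definition of dynamic regret} coincides exactly with the static regret \eqref{eq:static}. Further specializing to the centralized case $m=1$ reduces the distributed problem to the single-agent safe OCO setting of \citet{chaudhary2021safe}, and the order $O(T^{2/3}\sqrt{\log T})$ matches their static bound. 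Since the corollary is an immediate consequence of \cref{T: Dynamic regret bound}, no genuine obstacle arises; the only points needing care are confirming that $C_T^*$ is exactly zero (not merely small) under a fixed comparator, and verifying that the dynamic-to-static reduction is exact — which relies crucially on the fixed point being a \emph{common} minimizer of the $\{f_t\}$ rather than an arbitrary feasible point.
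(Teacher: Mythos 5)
Your proposal is correct and follows exactly the route the paper intends: the corollary is an immediate specialization of \cref{T: Dynamic regret bound} obtained by noting $C_T^*=0$ for a fixed comparator, leaving the $O(T^{2/3}\sqrt{\log T})$ term, which matches the $O(\sqrt{\log T}\,T^{2/3})$ static bound of \cite{chaudhary2021safe} in order. Your added care about the dynamic-to-static reduction requiring $\xb^*$ to be a \emph{common} minimizer of all $f_t$ is a valid and worthwhile clarification, but it does not change the argument.
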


\begin{remark}
Note that when $\Ab$ is known, there is no estimation error, and the trade-off in terms of $\eta$ and $T_0$ no longer exists. In other words, the agents do not incur the initial regret of $T_0+T_1=O(T^{2/3})$, caused by estimation. Then, by choosing $\eta=\Theta(\frac{1}{\sqrt{T}})$, the resulting bound is $O(\sqrt{T}(1+C^*_T)$, which recovers the result of \cite{shahrampour2018distributed} in the order sense.
\end{remark}


\begin{algorithm}[!h]
\caption{Distributed Safe OGD with linear constraints}
\label{alg:D-safe-ogd-linear-constraint}
\begin{algorithmic}[1]
    \STATE {\bf Require:} number of agents $m$, doubly stochastic matrix $\Pb\in \mathrm{R}^{m\times m}$, hyper-parameters $\gamma,\eta,\delta,\lambda$, time horizon $T$, a strictly feasible point $\xb^s$.
    \STATE Specify $T_0$ and $T_1$ based on given hyper-parameters and run Algorithm \ref{alg:Constraint estimation} to learn agents estimates $\{\hat{\Ab}_i\}_{i\in[m]}$ in a distributed fashion.
    \STATE For all $i\in[m]$, construct the safe set $\widehat{\mathcal{X}}^s_i$ from the estimate $\widehat{\Ab}_i$.
    \STATE {\bf Distributed online gradient descent over different feasible sets:}
    \STATE Let $T_s\triangleq(T_0+T_1+1)$.
    \FOR{$t=T_s,\ldots,T$}
        \FOR{$i=1,2,\ldots,m$}
         \STATE   \begin{equation*}
                \yb_{i,t} = \Pi_{\widehat{\mathcal{X}}^s_i}\left[\xb_{i,t} - \eta\nabla f_{i,t}(\xb_{i,t})\right].    
            \end{equation*}
        \ENDFOR
        \STATE For all $i\in[m]$, 
            \STATE
            \begin{equation*}
                \xb_{i,t+1} = \sum\limits_{j=1}^m[\Pb^{}]_{ji}\yb_{j,t}.    
            \end{equation*}
    \ENDFOR
\end{algorithmic}
\end{algorithm}

\section{Dynamic Regret Analysis for the Non-convex Setting}
 In this section, we study the \emph{non-convex} setting for safe distributed online optimization. Even for offline optimization in the non-convex setting, the standard metric for the convergence analysis is often stationarity, i.e., characterizing the decay rate of the gradient norm. In online optimization, we can also expect that standard regret notions, used in the convex setting, may not be tractable for understanding the algorithm performance. However, in a recent work by \cite{ghai2022non}, the authors studied an algorithmic equivalence property between OGD and OMD for certain problem geometries (Assumptions \ref{A: Geometric relation between phi and q}-\ref{A: Properties of mapping q}), in the sense that OGD applied to non-convex problems can be approximated by OMD applied to convex functions under reparameterization, using which a sub-linear static regret bound is guaranteed.

More specifically, for a centralized problem, suppose that there is a non-linear mapping $q$, such that $f_t(\xb) = \Tilde{f}_t(q(\xb))$, where $\Tilde{f}_t(\cdot)$ is convex, and consider the OGD and OMD updates\\ 
{\bf OGD:}
\begin{equation}\label{Eq: Centralized OGD}
\resizebox{.47\textwidth}{!}{$
    \xb_{t+1} = \argmin_{\xb\in\mathcal{X}} \left\{\nabla f_t(\xb_t)^{\top}(\xb - \xb_t) + \frac{1}{2\eta}\norm{\xb - \xb_t}^2\right\},$}
\end{equation}
{\bf OMD:}
\begin{equation}\label{Eq: Centralized OMD}
\resizebox{.47\textwidth}{!}{$
    \ub_{t+1} = \argmin_{\ub\in\mathcal{X}^{\prime}} \left\{\nabla \Tilde{f}_t(\ub_t)^{\top}(\ub - \ub_t) + \frac{1}{\eta}\mathcal{D}_{\phi}(\ub,\ub_t)\right\},$}
\end{equation}
where $\ub_{t} = q(\xb_{t})$ and $\Xc'$ is the image of $\Xc$ under the mapping $q$. By quantifying the deviation $\norm{\ub_{t+1} - q(\xb_{t+1})}$ between OGD and OMD, \citet{ghai2022non} proved a sublinear static regret bound for the online non-convex optimization using OGD. 

In this work, we establish this equivalence for ``distributed" variants of OGD and OMD under the additional complexity that the constraint set is unknown, and it can only be approximated via \cref{alg:Constraint estimation}. We also point out that as opposed to the convex case, for the non-convex algorithm we apply max-consensus \cite{nejad2009max} after the exploration phase, where at each communication step, each agent keeps the estimate with the maximum norm among the estimates collected from its neighbors (including its own estimate). In \cite{nejad2009max}, it was shown that for a connected graph, it takes $D_G$ communication steps for all agents to reach consensus on the estimate. Then, all agents construct the feasible set based on that estimate, which changes line 4 of \cref{alg:D-safe-ogd-linear-constraint} to a distributed OGD on a common feasible set. For the technical analysis of the non-convex setting, we use the following assumptions.


\begin{assumption}\label{A: Geometric relation between phi and q}
    There exists a bijective mapping $q: \mathcal{X}^s \to \mathcal{X}^{s\prime}$ such that $[\nabla^2 \phi(\ub)]^{-1}=J_q(\xb)J_q(\xb)^{\top}$ where $\ub=q(\xb)$.
\end{assumption}
Examples that satisfy Assumption \ref{A: Geometric relation between phi and q} are provided in Section 3.1 of \cite{ghai2022non}. For example, if $\phi$ is the negative entropy (respectively, log barrier), we can use quadratic (respectively, exponential) reparameterization for $q$. \citet{amid2020reparameterizing} showed that in the continuous-time setup when Assumption \ref{A: Geometric relation between phi and q} holds, the mirror descent regularization induced by $\phi$ can be transformed back to the Euclidean regularization by $q^{-1}$, which implies the equivalence between OMD for convex functions and OGD for non-convex functions. Though in the discrete-time case, the exact equivalence does not hold, \citet{ghai2022non} showed that OGD for non-convex functions can still be approximated as OMD for convex functions, and the corresponding static regret bound is $O(T^{2/3})$.

\begin{assumption}\label{A: Properties of mapping q}
    Properties of the mapping $q(\cdot)$:
    \begin{itemize}
        \item There exists a mapping $q(\cdot)$ such that $f_{i,t}(\xb) = \Tilde{f}_{i,t}(q(\xb))$, where $\Tilde{f}_{i,t}(\cdot)$ is convex.
        \item $q(\cdot)$ is a $C^3$-diffeomorphism, and $J_{q}(\xb)$ is diagonal.
        \item {\color{black} For any $\mathcal{X}\subset \mathcal{X}^s$ which is compact and convex, $\Xc^{\prime}\triangleq q(\mathcal{X})$ is convex and compact.}
    \end{itemize}
\end{assumption}
We again refer the reader to Section 3.1 of \cite{ghai2022non} for examples related to \cref{A: Properties of mapping q}. In general, Assumptions \ref{A: Geometric relation between phi and q}-\ref{A: Properties of mapping q} provide sufficient conditions for the analysis.  
In this work, the focus is on analyzing the effect of (i) the constraint estimation as well as (ii) the distributed setup in non-convex online learning, and we also generalize the analysis of \cite{ghai2022non} to the dynamic regret.

\begin{assumption}\label{A: Several Lipschitz assumptions}
    Let $W>1$ be a constant. Assume that $q(\cdot)$ is $W$-Lipschitz, $\phi(\cdot)$ is $1$-strongly convex and smooth with its first and third derivatives upper bounded by $W$. The first and second derivatives of $q^{-1}(\cdot)$ are also bounded by $W$. For all $\ub \in \mathcal{X}^{s\prime}$, $\mathcal{D}_{\phi}(\ub, \cdot)$ is $W$-Lipschitz over $\mathcal{X}^{s\prime}$.
\end{assumption}

\begin{assumption}\label{A: Gradient and diameter bounds}
    $\norm{\nabla \Tilde{f}_{i,t}(\ub)}\leq G_F$ for all $\ub \in \mathcal{X}^{s\prime}$ and $\sup_{\ub,\zb\in\mathcal{X}^{s\prime}}\mathcal{D}_{\phi}(\ub,\zb)\leq D^{\prime}$.
\end{assumption}

\begin{assumption}\label{A: Jensen's inequality for the Bregman divergence}
    Let $\xb$ and $\{\yb_i\}^m_{i=1}$ be vectors in $\mathrm{R}^d$. The Bregman divergence satisfies the separate convexity in the following sense
    \begin{equation*}
        \mathcal{D}_{\phi}(\xb, \sum_{i}^m\alpha_i \yb_i)\leq \sum_{i}^m\alpha_i \mathcal{D}_{\phi}(\xb, \yb_i),
    \end{equation*}
    where $\alpha\in \Delta_m$ is on the $m-$dimensional simplex.
\end{assumption}
This assumption is satisfied by commonly used Bregman divergences, e.g., Euclidean distance and KL divergence. We refer interested readers to \cite{bauschke2001joint,shahrampour2018distributed} for more information.

In the following theorem, we prove that with high probability, the dynamic regret bound of D-Safe-OGD is $O(T^{2/3}{\color{black} \sqrt{\log T}} + T^{2/3}C^*_T)$.

\begin{theorem}\label{T: Dynamic regret bound (non-convex)}
    Suppose Assumptions \ref{A1}-\ref{A3} and \ref{A: Geometric relation between phi and q}-\ref{A: Jensen's inequality for the Bregman divergence} hold and $T=\Omega\left(\big(\frac{L^2}{m\gamma^2\sigma_{\zeta}^2}\log (\frac{d}{\delta})\big)^{3/2}\right)$. By running Algorithm \ref{alg:D-safe-ogd-linear-constraint} with $\gamma\leq \frac{\Delta^s}{LL_A}$, $\eta=\Theta(T^{-2/3})$, $T_0 = \Theta(T^{2/3})$ and $T_1 = \Theta(\log T^{\rho})$, we have with probability at least $(1-2\delta)$
    \begin{equation*}
        \xb_{i,t}\in \mathcal{X}^s,\;\forall i\in [m], t\in[T],~~~\text{and}
    \end{equation*}
    \begin{equation*}
        \mathbf{Reg}^d_{i,T}  = O(T^{2/3}{\color{black} \sqrt{\log T}} + T^{2/3}C^*_T),\;\forall i \in [m].  
    \end{equation*}
\end{theorem}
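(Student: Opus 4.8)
First I would establish per-step safety. Since the non-convex variant applies max-consensus after exploration, all agents converge (in $D_G$ steps) to a common estimate $\widehat{\Ab}$ and hence build the \emph{same} feasible set $\widehat{\mathcal{X}}^s$. By \cref{L: Bound for estimation error} and the definition of the ball radius $\mathcal{B}_r$, each true row $\ab_k$ lies in its ball $\mathcal{C}_{i,k}$ with probability at least $1-\delta/n$; a union bound over the $n$ rows gives $\widehat{\mathcal{X}}^s\subseteq\mathcal{X}^s$ with probability at least $1-2\delta$. Because every optimization-phase iterate is a Euclidean projection onto $\widehat{\mathcal{X}}^s$ followed by a doubly-stochastic average of such projections, all actions are safe. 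I would then split $\mathbf{Reg}^d_{i,T}$ into the exploration block $t<T_s$ and the optimization block $t\geq T_s$; the former contributes $O(T_0+T_1)=O(T^{2/3})$ because the domain is compact and the gradients are bounded (Assumptions \ref{A1} and \ref{A: Gradient and diameter bounds}).

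\textbf{Reparameterization and the distributed OGD--OMD bridge.} For the optimization block I would move to the $\ub$-space by setting $\ub_{i,t}=q(\xb_{i,t})$ and $\ub_t^*=q(\xb_t^*)$, so that $f_t(\xb_{i,t})-f_t(\xb_t^*)=\tilde f_t(\ub_{i,t})-\tilde f_t(\ub_t^*)$ with $\tilde f_t=\sum_i\tilde f_{i,t}$ convex (Assumption \ref{A: Properties of mapping q}). The core step is to show that the distributed non-convex OGD iterates, pushed through $q$, shadow an \emph{ideal} distributed OMD sequence $\{\vb_{i,t}\}$ that runs exact mirror descent with map $\phi$ on the convex losses $\tilde f_{i,t}$. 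Using Assumption \ref{A: Geometric relation between phi and q}, which identifies $[\nabla^2\phi(\ub)]^{-1}$ with $J_q(\xb)J_q(\xb)^\top$, together with the smoothness and Lipschitz bounds of Assumption \ref{A: Several Lipschitz assumptions}, a second-order expansion shows each local gradient step agrees with the corresponding local mirror step up to $O(\eta^2)$, exactly as in \cite{ghai2022non}. The new wrinkle is the communication step: it is a doubly-stochastic average in $\xb$-space but only an approximate average in $\ub$-space, so I would bound the discrepancy $\norm{q(\sum_j[\Pb]_{ji}\yb_{j,t})-\sum_j[\Pb]_{ji}q(\yb_{j,t})}$ using the $W$-Lipschitzness of $q$ and $q^{-1}$. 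Accumulating these per-step deviations over the horizon and multiplying by the gradient bound $G_F$ bounds the regret gap between the true iterates and the ideal OMD sequence.

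\textbf{Dynamic regret of the ideal OMD and balancing.} For $\{\vb_{i,t}\}$ I would run the standard Bregman-divergence descent analysis: the one-step inequality, telescoped in time, yields a drift term $\mathcal{D}_\phi(\ub_{t+1}^*,\cdot)-\mathcal{D}_\phi(\ub_t^*,\cdot)$ that I would control by the path-length via the $W$-Lipschitzness of $\mathcal{D}_\phi(\cdot,\cdot)$ (Assumption \ref{A: Several Lipschitz assumptions}), producing $O((D'+C_T(\ub^*))/\eta+\eta TG_F^2)$; the network/consensus error is absorbed through the geometric mixing bound $\sum_j|[\Pb^k]_{ji}-1/m|\leq\sqrt{m}\beta^k$ together with the separate convexity of the Bregman divergence (Assumption \ref{A: Jensen's inequality for the Bregman divergence}). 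Since the comparator $\xb_t^*$ lies in $\mathcal{X}^s$ while the algorithm only optimizes over $\widehat{\mathcal{X}}^s$, I would add a comparator-gap term: exploiting the geometric property of linear constraints \cite{fereydounian2020safe}, the nearest feasible surrogate to $\xb_t^*$ in $\widehat{\mathcal{X}}^s$ is within $O(\mathcal{B}_r)$, so the per-step gap is $O(G\mathcal{B}_r)$ and the total is $O(T\mathcal{B}_r)=O(T^{2/3}\sqrt{\log T})$ under $T_0=\Theta(T^{2/3})$. Collecting the exploration cost $O(T^{2/3})$, the comparator gap $O(T^{2/3}\sqrt{\log T})$, the OMD bound $O(T^{2/3}(1+C_T(\ub^*)))$ under $\eta=\Theta(T^{-2/3})$, and using $C_T(\ub^*)\leq WC_T^*$, yields the claimed $O(T^{2/3}\sqrt{\log T}+T^{2/3}C_T^*)$.

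\textbf{Main obstacle.} The hard part is the distributed OGD--OMD shadowing: \cite{ghai2022non} treats only the centralized, static case, whereas here the $O(\eta^2)$ per-step approximation must be propagated through both a time-varying comparator and, more delicately, the consensus averaging, which is linear in $\xb$ but nonlinear in $\ub$. Controlling this accumulated deviation is exactly what forces the smaller step size $\eta=\Theta(T^{-2/3})$ (versus $\Theta(T^{-1/3})$ in the convex case), which is the reason the coefficient of $C_T^*$ degrades from $T^{1/3}$ to $T^{2/3}$.
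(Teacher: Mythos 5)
Your overall route matches the paper's: the same three-way decomposition (exploration block, optimization block against a projected comparator, comparator gap via the shrunk polytope of \cite{fereydounian2020safe}), the same use of max-consensus to obtain a common feasible set, the same passage to $\ub$-space with a distributed OMD shadow sequence, and the same telescoping Bregman analysis with the drift controlled by the path-length. The one substantive issue is your quantitative claim that the local gradient step agrees with the local mirror step ``up to $O(\eta^2)$, exactly as in \cite{ghai2022non}.'' That is not what \cite{ghai2022non} proves: their Lemma 4 (reproduced as \cref{L: Update deviation between OGD and OMD} here) gives a per-step deviation of $O(W^4 G_F^{3/2}\eta^{3/2})$, and it is the consensus-nonlinearity term $\norm{q(\sum_j[\Pb]_{ji}\yb_{j,t})-\sum_j[\Pb]_{ji}q(\yb_{j,t})}$ that is $O(\eta^2)$ (via a Taylor expansion over a convex hull of diameter $O(\eta)$, see \cref{L: Update deviation between OGD and OMD (distributed)}), so the $\eta^{3/2}$ term dominates.

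This matters because the deviation enters the one-step inequality before division by $\eta$, contributing $O(W\eta^{1/2})$ per step and hence $O(T\sqrt{\eta})$ in total; balancing $T\sqrt{\eta}$ against $1/\eta$ is exactly what forces $\eta=\Theta(T^{-2/3})$ and the $T^{2/3}C_T^*$ coefficient. Under your claimed $O(\eta^2)$ shadowing the accumulated deviation would be $O(T\eta)$, the balance would permit $\eta=\Theta(T^{-1/2})$, and the $C_T^*$ coefficient would be $\sqrt{T}$ rather than $T^{2/3}$ --- so your own ``main obstacle'' narrative (that the deviation control forces the smaller step size) is inconsistent with the rate you state. The fix is simply to import the correct $O(\eta^{3/2})$ bound; with that correction your argument goes through and coincides with the paper's. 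A minor additional omission: you should also note that the exploration-phase actions themselves are safe (via $\gamma\le \Delta^s/(LL_A)$ and \cref{L:safe_exploration}), since the theorem asserts $\xb_{i,t}\in\mathcal{X}^s$ for all $t\in[T]$, not only during the optimization phase.
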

\begin{corollary}
Suppose that the comparator sequence is static over time, i.e., $\xb^*_t = \xb^*,\; \forall t\in [T]$. Then, the individual regret bound becomes $O(T^{2/3}{\color{black} \sqrt{\log T}})$, which recovers the static regret bound of \cite{ghai2022non} {\color{black} up to log factor}.
\end{corollary}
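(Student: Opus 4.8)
### Proof Proposal for the Corollary

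The plan is to specialize Theorem~\ref{T: Dynamic regret bound (non-convex)} to the static-comparator regime and verify that the path-length term vanishes cleanly, leaving only the leading $O(T^{2/3}\sqrt{\log T})$ term. The essential observation is that the corollary is a direct consequence of the theorem: since the theorem's bound is $O(T^{2/3}\sqrt{\log T} + T^{2/3}C_T^*)$ and holds for an arbitrary minimizer sequence $\{\xb_t^*\}$, I would first instantiate it with the constant sequence $\xb_t^* = \xb^*$ for all $t \in [T]$. Under this choice, the path-length regularity measure defined in \eqref{eq: path length start} satisfies
\begin{equation*}
    C_T^* = \sum_{t=2}^T \norm{\xb_t^* - \xb_{t-1}^*} = \sum_{t=2}^T \norm{\xb^* - \xb^*} = 0,
\end{equation*}
so the $T^{2/3}C_T^*$ contribution collapses entirely, yielding $\mathbf{Reg}^d_{i,T} = O(T^{2/3}\sqrt{\log T})$ for all $i \in [m]$.

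The remaining work is to confirm that this specialized bound coincides with the static regret guarantee of \cite{ghai2022non} up to the logarithmic factor, as the corollary claims. Here I would reduce the distributed, safe setting to the centralized, known-constraint setting that \cite{ghai2022non} analyzes. With a single agent ($m=1$), there is no consensus error and the network term governed by $\beta/(1-\beta)$ disappears; with $\Ab$ known, the estimation radius $\mathcal{B}_r$ in \eqref{Eq: The ball set w.r.t. an estimate} degenerates and the exploration phase of $T_0 + T_1 = O(T^{2/3})$ steps is no longer forced. In that reduced problem, D-Safe-OGD becomes precisely the OGD update \eqref{Eq: Centralized OGD} on a fixed feasible set, and the approximation-by-OMD machinery of \cite{ghai2022non} (built on Assumptions~\ref{A: Geometric relation between phi and q}--\ref{A: Jensen's inequality for the Bregman divergence}) delivers the $O(T^{2/3})$ static regret. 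The $\sqrt{\log T}$ overhead in our bound is an artifact of the high-probability constraint estimation (via \cref{L: Bound for estimation error}) that is absent when $\Ab$ is known, which is exactly why the corollary asserts recovery only ``up to log factor.''

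The main obstacle---though it is conceptual rather than computational---is ensuring that setting $C_T^* = 0$ does not inadvertently disturb the parameter choices that produce the theorem's bound. In particular, the step sizes $\eta = \Theta(T^{-2/3})$, $T_0 = \Theta(T^{2/3})$, and $T_1 = \Theta(\log T^\rho)$ are fixed inputs to the algorithm that do not depend on the comparator sequence, so the bound in Theorem~\ref{T: Dynamic regret bound (non-convex)} remains valid verbatim, and substituting $C_T^* = 0$ is legitimate. I would verify that the $T^{2/3}C_T^*$ term in the non-convex analysis arises from bounding $\sum_t \mathcal{D}_\phi(\ub_t^*, \ub_{t-1}^*)$-type drift quantities (the Bregman-divergence analogue of path-length under the reparameterization $\ub = q(\xb)$), and confirm via the $W$-Lipschitz continuity of $q$ and of $\mathcal{D}_\phi(\ub,\cdot)$ in Assumption~\ref{A: Several Lipschitz assumptions} that a constant $\xb^*$ maps to a constant $\ub^* = q(\xb^*)$, forcing every such drift term to zero. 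This makes the reduction airtight: no hidden dependence on comparator motion survives, and the bound reduces exactly as claimed.
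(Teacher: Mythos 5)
Your proposal is correct and matches the paper's (implicit) argument: the corollary follows immediately from Theorem~\ref{T: Dynamic regret bound (non-convex)} by substituting the static comparator, which makes $C_T^* = 0$ and leaves the $O(T^{2/3}\sqrt{\log T})$ term, with the $\sqrt{\log T}$ overhead relative to \cite{ghai2022non} attributable to the high-probability constraint estimation. Your additional verifications (that $\eta$, $T_0$, $T_1$ do not depend on the comparator, and that a constant $\xb^*$ maps to a constant $\ub^* = q(\xb^*)$) are sound but not needed beyond the direct substitution the paper intends.
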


It is worth noting that though in the convex case, the estimation of unknown constraints exacerbates the regret bound (due to $O(T^{2/3})$ time spent on exploration), for the non-convex case, the resulting bound still matches the static regret of \cite{ghai2022non}, where there is no estimation error. In other words, there is no trade-off in this case as the static regret (without estimation error) is $O(T^{2/3})$ \cite{ghai2022non}.

\section*{Conclusion}
In this work, we considered safe distributed online optimization with an unknown set of linear constraints. The goal of the network is to ensure that the action sequence selected by each agent, which only has partial information about the global
function, is competitive to the centralized minimizers in hindsight without violating the safety constraints. To address this problem, we proposed D-Safe-OGD, where starting from a safe region, it allows all agents to perform exploration to estimate the unknown constraints in a distributed fashion. Then, distributed OGD is applied over the feasible sets formed by agents estimates. For convex functions, we proved a dynamic regret bound of $O(T^{2/3}{\color{black} \sqrt{\log T}} + T^{1/3}C^*_T)$, which recovers the static regret bound of \citet{chaudhary2021safe} for the centralized case (single agent). Then, we showed that for the non-convex setting, the dynamic regret is upper bounded by $O(T^{2/3}{\color{black} \sqrt{\log T}} + T^{2/3}C^*_T)$, which recovers the static regret bound of \citet{ghai2022non} for the centralized case (single agent) {\color{black} up to log factor}. Possible future directions include improving the regret using adaptive techniques and/or deriving comprehensive regret bounds in terms of other variation measures, such as $V_T$.



\bibliographystyle{IEEEtran}
\bibliography{references}

\onecolumn

\newpage
\appendix

\section{Appendix}
In this section, we provide the proofs of our theoretical results. In Section \ref{sec:prelim}, we state the results we use in our analysis. Section \ref{sec:safe} includes the proof of estimation error bound in \cref{L: Bound for estimation error}. In Sections \ref{sec:conv} and \ref{sec:nonconv}, we provide the proofs for \cref{T: Dynamic regret bound} (convex case) and \cref{T: Dynamic regret bound (non-convex)} (non-convex case), respectively.

\subsection{Preliminaries:}\label{sec:prelim}
\begin{theorem}\label{T: Confidence set of OLS}(Theorem 2 in \cite{abbasi2011improved}).
Let $\{F_t\}_{t=0}^{\infty}$ be a filtration. Let $\{w_t\}_{t=1}^{\infty}$ be a real-valued stochastic process. Here $w_t$ is $F_t$-measurable and $w_t$ is conditionally $R$-sub Gaussian for some $R\geq 0$. Let $\{\xb_t\}_{t=1}^{\infty}$ be an $\mathrm{R}^d$-valued stochastic process such that $\xb_t$ is $F_{t-1}$-measurable. Let $\Vb_T\triangleq \sum_{t=1}^T \xb_t\xb_t^{\top} + \lambda\Ib$ where $\lambda>0$. Define $y_t$ as $\ab^{\top}\xb_t + w_t$, then $\hat{\ab}_T = \Vb_T^{-1} \sum_{t=1}^T y_t \xb_t$ is the $l_2$-regularized least squares estimate of $\ab$. Assume $\norm{\ab}\leq L_A$ and $\norm{\xb_t}\leq L,\; \forall t$. Then for any $\delta > 0$, with probability $(1-\delta)$, the true parameter $\ab$ lies in the following set:
\begin{equation*}
    \left\{ \ab\in\mathrm{R}^d: \norm{\ab - \hat{\ab}_T}_{\Vb_T} \leq R\sqrt{d\log \big(\frac{1+TL^2/\lambda}{\delta}\big)} + \sqrt{\lambda}L_A \right\},
\end{equation*}
for all $T\geq 1$.
\end{theorem}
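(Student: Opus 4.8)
The plan is to reduce the claim to a self-normalized tail bound for the martingale noise and then establish that bound via the method of mixtures. First I would rewrite the estimation error. Writing $\Sb_T \triangleq \sum_{t=1}^T w_t \xb_t$ and using $\sum_{t=1}^T \xb_t\xb_t^\top = \Vb_T - \lambda\Ib$, the normal equations give
\begin{equation*}
\hat{\ab}_T = \Vb_T^{-1}\sum_{t=1}^T (\ab^\top\xb_t + w_t)\xb_t = \ab - \lambda\Vb_T^{-1}\ab + \Vb_T^{-1}\Sb_T,
\end{equation*}
so that $\norm{\hat{\ab}_T-\ab}_{\Vb_T} = \norm{\Sb_T-\lambda\ab}_{\Vb_T^{-1}} \leq \norm{\Sb_T}_{\Vb_T^{-1}} + \lambda\norm{\ab}_{\Vb_T^{-1}}$. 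Since $\Vb_T\succeq\lambda\Ib$ forces $\Vb_T^{-1}\preceq\lambda^{-1}\Ib$, the deterministic term obeys $\lambda\norm{\ab}_{\Vb_T^{-1}}\leq\sqrt{\lambda}\,\norm{\ab}\leq\sqrt{\lambda}L_A$. It therefore remains to show $\norm{\Sb_T}_{\Vb_T^{-1}}\leq R\sqrt{d\log((1+TL^2/\lambda)/\delta)}$ with probability at least $(1-\delta)$.

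For the self-normalized term, for each fixed $\nub\in\R^d$ I would introduce the exponential process
\begin{equation*}
M_t^{\nub}\triangleq\exp\!\Big(\sum_{s=1}^t\big(\nub^\top\xb_s\, w_s-\tfrac{1}{2}R^2(\nub^\top\xb_s)^2\big)\Big)=\exp\!\Big(\nub^\top\Sb_t-\tfrac{1}{2}R^2\,\nub^\top\big(\textstyle\sum_{s=1}^t\xb_s\xb_s^\top\big)\nub\Big).
\end{equation*}
Because $\xb_s$ is $F_{s-1}$-measurable and $w_s$ is conditionally $R$-sub-Gaussian, each factor has conditional mean at most one, so $M_t^{\nub}$ is a nonnegative supermartingale with $\E[M_0^{\nub}]=1$. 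I would then average over a Gaussian prior $\nub\sim\mathcal{N}(0,(\lambda R^2)^{-1}\Ib)$, setting $\bar{M}_t\triangleq\int_{\R^d}M_t^{\nub}\,dh(\nub)$; by Tonelli this mixture is again a nonnegative supermartingale with $\E[\bar{M}_0]=1$. Completing the square in the Gaussian integral — the covariance is chosen precisely so the quadratic form assembles into $R^2\Vb_t$ — yields the closed form
\begin{equation*}
\bar{M}_t=\Big(\frac{\det(\lambda\Ib)}{\det\Vb_t}\Big)^{1/2}\exp\!\Big(\frac{1}{2R^2}\norm{\Sb_t}_{\Vb_t^{-1}}^2\Big).
\end{equation*}

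Finally I would invoke a maximal inequality for nonnegative supermartingales (a stopping-time/Ville argument) to obtain $\P(\exists t:\bar{M}_t\geq 1/\delta)\leq\delta$. On the complementary event, taking logarithms in the displayed formula for $\bar{M}_T$ gives $\norm{\Sb_T}_{\Vb_T^{-1}}^2\leq 2R^2\log(1/\delta)+R^2\log(\det\Vb_T/\lambda^d)$. I would bound the log-determinant using $\norm{\xb_t}\leq L$: the eigenvalues of $\Vb_T$ sum to at most $d\lambda+TL^2$, so by AM--GM $\det\Vb_T/\lambda^d\leq(1+TL^2/(\lambda d))^d\leq(1+TL^2/\lambda)^d$, and absorbing the $\log(1/\delta)$ term delivers $\norm{\Sb_T}_{\Vb_T^{-1}}\leq R\sqrt{d\log((1+TL^2/\lambda)/\delta)}$; combining with the deterministic term finishes the proof. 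The main obstacle is this self-normalized step: neither a fixed-$\nub$ Chernoff bound nor a naive union bound suffices because $\Vb_T$ is itself random and adapted, and it is precisely the mixture that converts the family $\{M_t^{\nub}\}$ into a single pivot $\bar{M}_t$ whose maximal inequality holds uniformly in time and direction.
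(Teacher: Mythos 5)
Your proposal is correct, but note that the paper itself states this theorem without proof, importing it verbatim from \cite{abbasi2011improved}; your reconstruction is essentially the original argument of that reference (its Theorems 1 and 2): the same error decomposition $\hat{\ab}_T-\ab = \Vb_T^{-1}(\Sb_T-\lambda\ab)$, the same Gaussian method-of-mixtures supermartingale with Ville's maximal inequality, and the same trace/AM--GM determinant bound. The only (inherited) blemish is the final absorption $2\log(1/\delta)\leq d\log(1/\delta)$, which tacitly assumes $d\geq 2$, exactly as in the cited source.
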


\begin{theorem}\label{T: Matrix concentration inequality}(Theorem 5.1.1 in \cite{tropp2015introduction}).
Consider a finite sequence $\{\Xb_t\}$ of independent, random and positive semi-definite matrices $\in \mathrm{R}^{d\times d}$. Assume that $\lambda_{\max}(\Xb_t)\leq L, \;\forall t$. Define $\Yb\triangleq \sum_{t}\Xb_t$ and denote $\lambda_{\min}(\mathbb{E}[\Yb])$ as $\mu$. Then we have
\begin{equation*}
    \mathbb{P}(\lambda_{\min}(\Yb)\leq \epsilon\mu) \leq d\exp\big(-(1-\epsilon)^2\frac{\mu}{2L}\big),\;\text{for any }\epsilon \in (0,1).
\end{equation*}
\end{theorem}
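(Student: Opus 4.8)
The plan is to prove this by the matrix Laplace transform method (the matrix Chernoff technique). The first step is to convert the minimum-eigenvalue tail into a trace-exponential bound. For any $\theta > 0$, since $\lambda_{\min}(\Yb) = -\lambda_{\max}(-\Yb)$ and $x\mapsto e^{-\theta x}$ is decreasing, the event $\{\lambda_{\min}(\Yb)\leq\epsilon\mu\}$ coincides with $\{\lambda_{\max}(e^{-\theta\Yb})\geq e^{-\theta\epsilon\mu}\}$ by the spectral mapping theorem. Applying Markov's inequality and bounding the top eigenvalue of a positive semi-definite matrix by its trace gives
\[
\P(\lambda_{\min}(\Yb)\leq\epsilon\mu)\;\leq\; e^{\theta\epsilon\mu}\,\E[\tr{e^{-\theta\Yb}}].
\]

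The crux is to control $\E[\tr{\exp(-\theta\sum_t \Xb_t)}]$. Because the $\Xb_t$ are independent, I would invoke the subadditivity of the matrix cumulant generating function (a consequence of Lieb's concavity theorem), which yields $\E[\tr{\exp(-\theta\sum_t \Xb_t)}] \leq \tr{\exp(\sum_t \log \E[e^{-\theta\Xb_t}])}$. This is the single genuinely deep ingredient and is exactly the step I expect to be the main obstacle if one proved it from scratch; everything downstream is operator-inequality bookkeeping. To bound each term, note $\0\preceq\Xb_t\preceq L\Ib$, so the scalar chord inequality $e^{-\theta x}\leq 1-\tfrac{1-e^{-\theta L}}{L}x$ on $[0,L]$ (valid since $e^{-\theta x}$ is convex) transfers to the operator inequality $\E[e^{-\theta\Xb_t}]\preceq\Ib-\tfrac{1-e^{-\theta L}}{L}\E[\Xb_t]$. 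Using operator monotonicity of the logarithm together with its matrix form $\log(\Ib-\Cb)\preceq-\Cb$ (well-defined because $\tfrac{1-e^{-\theta L}}{L}\E[\Xb_t]\prec\Ib$), summing over $t$ gives $\sum_t \log\E[e^{-\theta\Xb_t}]\preceq-\tfrac{1-e^{-\theta L}}{L}\E[\Yb]$.

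Next I would combine these bounds. Since $\Ab\preceq\Bb$ implies $\tr{e^{\Ab}}\leq\tr{e^{\Bb}}$, and the eigenvalues of $\E[\Yb]$ are nonnegative with smallest value $\mu$, I obtain
\[
\tr{\exp\!\Big(-\tfrac{1-e^{-\theta L}}{L}\E[\Yb]\Big)}=\sum_{i=1}^d \exp\!\Big(-\tfrac{1-e^{-\theta L}}{L}\lambda_i(\E[\Yb])\Big)\leq d\exp\!\Big(-\tfrac{1-e^{-\theta L}}{L}\mu\Big),
\]
the last step holding because the exponential is largest at the smallest eigenvalue. Assembling the pieces produces $\P(\lambda_{\min}(\Yb)\leq\epsilon\mu)\leq d\exp\big(\tfrac{\mu}{L}(\theta L\epsilon-(1-e^{-\theta L}))\big)$.

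Finally I would optimize over $\theta$. Writing $s=\theta L$, the exponent is $\tfrac{\mu}{L}(s\epsilon-1+e^{-s})$, and minimizing $g(s)=s\epsilon-1+e^{-s}$ gives $e^{-s}=\epsilon$, i.e. $s=\log(1/\epsilon)>0$ for $\epsilon\in(0,1)$, with optimal value $g(s)=\epsilon-1-\epsilon\log\epsilon$. To reach the stated clean form it remains to apply the elementary inequality $\epsilon-1-\epsilon\log\epsilon\leq-\tfrac{(1-\epsilon)^2}{2}$ on $(0,1)$, which follows by checking that $h(\epsilon)\triangleq\epsilon-1-\epsilon\log\epsilon+\tfrac{(1-\epsilon)^2}{2}$ satisfies $h(1)=h'(1)=0$ and $h''(\epsilon)=(\epsilon-1)/\epsilon<0$, so $h\leq 0$. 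This yields $\P(\lambda_{\min}(\Yb)\leq\epsilon\mu)\leq d\exp(-(1-\epsilon)^2\mu/(2L))$, as claimed.
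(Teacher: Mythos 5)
Your proof is correct and follows essentially the same route as the paper's source for this result: the paper states the theorem by citation to Tropp's monograph \cite{tropp2015introduction}, and your argument---the Laplace transform reduction $\P(\lambda_{\min}(\Yb)\leq\epsilon\mu)\leq e^{\theta\epsilon\mu}\,\E[\tr{e^{-\theta\Yb}}]$, Lieb-based subadditivity of the matrix cumulant generating function, the chord transfer $e^{-\theta x}\leq 1-\tfrac{1-e^{-\theta L}}{L}x$ combined with operator monotonicity of $\log$, optimization at $\theta L=\log(1/\epsilon)$, and the final scalar simplification $\epsilon-1-\epsilon\log\epsilon\leq-\tfrac{(1-\epsilon)^2}{2}$---is precisely the standard matrix Chernoff derivation given there. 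All the operator-inequality bookkeeping checks out, including the well-definedness of the matrix logarithm and the concavity verification of $h(\epsilon)$ at the last step.
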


\noindent
Definition of a {\bf shrunk} polytope:
\begin{align}
\label{eq:inner-polytope}
    \mathcal{X}^{s}_{\textnormal{in}} &= \{ \xb \in \mathbb{R}^{d}:  \ab_k^\top \xb + \tau_{\textnormal{in}} \leq b_k, \forall k \in [n]\},\; \textnormal{for some } \tau_{\textnormal{in}} > 0.
\end{align}

\begin{lemma}[Lemma 1 in \cite{fereydounian2020safe}]
\label{lem:fereydounian2020safe-lemma}
Consider a positive constant $\tau_{\textnormal{in}}$ such that $\mathcal{X}^{s}_{\textnormal{in}}$ is non-empty. Then, for any $\xb \in \mathcal{X}^{s}$,
\begin{align}
    \|\Pi_{\mathcal{X}^{s}_{\textnormal{in}}}(\xb) - \xb\| \leq \frac{\sqrt{d} \tau_{\textnormal{in}}}{C(\Ab, \bb)},
\end{align}
where $C(\Ab, \bb)$ is a positive constant that depends only on the matrix $\Ab$ and the vector $\bb$. 
\end{lemma}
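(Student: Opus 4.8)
The plan is to prove the bound through the standard reduction that a Euclidean projection onto a convex set is the nearest point: for any $\xb \in \mathcal{X}^{s}$ and \emph{any} feasible $\yb \in \mathcal{X}^{s}_{\textnormal{in}}$ we have $\norm{\Pi_{\mathcal{X}^{s}_{\textnormal{in}}}(\xb) - \xb} \le \norm{\yb - \xb}$. Hence it suffices to exhibit, for every $\xb \in \mathcal{X}^{s}$, one point $\yb \in \mathcal{X}^{s}_{\textnormal{in}}$ with $\norm{\yb - \xb}$ controlled by $\tau_{\textnormal{in}}$. Geometrically this is a one-sided (Hausdorff-type) deviation bound between the polytope $\mathcal{X}^{s} = \{\xb : \ab_k^\top \xb \le b_k,\ \forall k\}$ and its inward shift $\mathcal{X}^{s}_{\textnormal{in}} = \{\xb : \ab_k^\top \xb \le b_k - \tau_{\textnormal{in}},\ \forall k\}$, which share the same normals $\ab_k$ and differ only in the right-hand side by the uniform amount $\tau_{\textnormal{in}}$.

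The quickest way to obtain the correct \emph{order} is a Hoffman-type error bound for linear inequality systems. For $\xb \in \mathcal{X}^{s}$ the residual of $\xb$ against the inner system is $\max\{\ab_k^\top \xb - (b_k - \tau_{\textnormal{in}}),\, 0\} \le \tau_{\textnormal{in}}$ for every $k$, since $\ab_k^\top \xb \le b_k$. Hoffman's lemma then yields $\textnormal{dist}(\xb, \mathcal{X}^{s}_{\textnormal{in}}) \le H(\Ab)\, \norm{r(\xb)}$, where $r(\xb)$ is the residual vector and $H(\Ab)$ is a Hoffman constant depending only on $\Ab$ and $\bb$ (its finiteness uses that $\mathcal{X}^{s}_{\textnormal{in}}$ is nonempty). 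This already gives the desired linear-in-$\tau_{\textnormal{in}}$ dependence; the remaining task is to pin down the precise dimension factor $\sqrt{d}$ rather than $\sqrt{n}$.

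To extract $\sqrt{d}$ I would run a local analysis at $\yb^\star = \Pi_{\mathcal{X}^{s}_{\textnormal{in}}}(\xb)$. By the optimality (KKT) conditions for the projection, $\xb - \yb^\star$ is a nonnegative combination of the active normals $\{\ab_k : \ab_k^\top \yb^\star = b_k - \tau_{\textnormal{in}}\}$; on the support $S$ of the multipliers $\xb$ must \emph{violate} the corresponding inner constraints, so the residual entries $v_k = \ab_k^\top \xb - (b_k - \tau_{\textnormal{in}})$ lie in $[0,\tau_{\textnormal{in}}]$. Selecting a maximal linearly independent subset of these rows (of size $d' \le d$) into a full-row-rank submatrix $\Ab_S$, the displacement is recovered from the $d'$-dimensional active system via $\xb - \yb^\star = \Ab_S^{+} v$, whence $\norm{\xb - \yb^\star} \le \sigma_{\min}(\Ab_S)^{-1}\norm{v} \le \sigma_{\min}(\Ab_S)^{-1}\sqrt{d'}\,\tau_{\textnormal{in}} \le \sigma_{\min}(\Ab_S)^{-1}\sqrt{d}\,\tau_{\textnormal{in}}$. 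Taking $C(\Ab,\bb)$ to be the minimum of $\sigma_{\min}(\Ab_S)$ over the finitely many full-row-rank active submatrices of $\Ab$ gives exactly $\norm{\Pi_{\mathcal{X}^{s}_{\textnormal{in}}}(\xb) - \xb} \le \sqrt{d}\,\tau_{\textnormal{in}}/C(\Ab,\bb)$.

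The hard part will be the uniform, $\xb$-independent control of the conditioning constant $C(\Ab,\bb)$ together with the bookkeeping for degeneracy: when $\yb^\star$ lies on a low-dimensional face many constraints may be active, so one must argue that some full-rank selection of the active/support rows exists and that its smallest singular value is bounded below by a quantity depending only on $\Ab$ and $\bb$. This is precisely where the nonemptiness of $\mathcal{X}^{s}_{\textnormal{in}}$ and the finiteness of the collection of active sets enter, guaranteeing $C(\Ab,\bb) > 0$. As a fully self-contained sanity check that already yields the correct order $O(\tau_{\textnormal{in}})$, I would also note the elementary construction $\yb_\lambda = (1-\lambda)\xb + \lambda \xb_0$, where $\xb_0$ is a deep interior (Chebyshev) point of $\mathcal{X}^{s}$ with margin $\tau_0 > \tau_{\textnormal{in}}$; choosing $\lambda = \tau_{\textnormal{in}}/\tau_0$ forces $\yb_\lambda \in \mathcal{X}^{s}_{\textnormal{in}}$ and, using $\norm{\xb}\le L$ from Assumption~\ref{A1}, gives $\norm{\yb_\lambda - \xb} \le (2L/\tau_0)\,\tau_{\textnormal{in}}$, confirming the bound up to the exact form of the constant.
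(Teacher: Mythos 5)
This lemma is imported by the paper verbatim from \cite{fereydounian2020safe} (its Lemma~1) with no proof given in the appendix, so there is no in-paper argument to compare against; your proposal must be judged on its own. Its architecture (nearest-point reduction, KKT at $\yb^\star=\Pi_{\mathcal{X}^{s}_{\textnormal{in}}}(\xb)$, a minimum-singular-value constant over the finitely many linearly independent active row subsets) is the right one and does yield the stated bound. But there is one genuinely false step: the claim that on the support $S$ of the multipliers, $\xb$ must violate the inner constraints, so that $v_k=\ab_k^\top\xb-(b_k-\tau_{\textnormal{in}})\in[0,\tau_{\textnormal{in}}]$. Complementary slackness ties the multipliers to activity at $\yb^\star$, not to violation at $\xb$, and the sign claim fails even in the plane: take $\ab_1=(1,0)$, $\ab_2=(-0.9,\sqrt{0.19})$, inner right-hand sides $0$, $\yb^\star=\mathbf{0}$, and $\xb=\ab_1+0.01\,\ab_2$. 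Then $\xb-\yb^\star$ lies in the normal cone at $\yb^\star$ (so $\yb^\star$ is indeed the projection, with $\lambda_2=0.01>0$ and the representation unique), yet $v_2=\ab_2^\top\xb\approx-0.89<0$: constraint $2$ is active at $\yb^\star$ with a positive multiplier while $\xb$ strictly satisfies it. Consequently your bound $\norm{v}\leq\sqrt{d'}\,\tau_{\textnormal{in}}$, which needs $|v_k|\leq\tau_{\textnormal{in}}$ and not merely the one-sided $v_k\leq\tau_{\textnormal{in}}$ that feasibility $\xb\in\mathcal{X}^{s}$ actually gives, is unproven, and the pseudoinverse step $\xb-\yb^\star=\Ab_S^{+}v$ inherits the gap.

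The repair is short and keeps your constant. After the Carath\'eodory reduction to a linearly independent support $S$ ($|S|=d'\leq d$), write $\xb-\yb^\star=\Ab_S^\top\lambda$ with $\lambda\geq 0$ and use activity at $\yb^\star$ to get
\begin{equation*}
\norm{\xb-\yb^\star}^2=\lambda^\top\Ab_S(\xb-\yb^\star)=\sum_{k\in S}\lambda_k v_k\leq \tau_{\textnormal{in}}\norm{\lambda}_1\leq \tau_{\textnormal{in}}\sqrt{d}\,\norm{\lambda}\leq \frac{\tau_{\textnormal{in}}\sqrt{d}}{\sigma_{\min}(\Ab_S)}\norm{\Ab_S^\top\lambda},
\end{equation*}
where only the one-sided bound $v_k\leq\tau_{\textnormal{in}}$ and $\lambda_k\geq 0$ are used; dividing by $\norm{\xb-\yb^\star}=\norm{\Ab_S^\top\lambda}$ gives $\norm{\xb-\yb^\star}\leq\sqrt{d}\,\tau_{\textnormal{in}}/C(\Ab,\bb)$ with your $C(\Ab,\bb)=\min_S\sigma_{\min}(\Ab_S)>0$. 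Two smaller remarks: the Hoffman-bound detour is fine but, as you note, only delivers a $\sqrt{n}$ factor; and the closing ``sanity check'' needs a point with margin $\tau_0$ strictly larger than $\tau_{\textnormal{in}}$ and bounded away from it, which the lemma's sole hypothesis (nonemptiness of $\mathcal{X}^{s}_{\textnormal{in}}$) does not supply, so it should be flagged as requiring an extra interiority assumption rather than following from Assumption~\ref{A1}.
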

\noindent

\subsection{Safe Distributed Set Estimation}\label{sec:safe}
\begin{proof}[Proof of Lemma \ref{L: Bound for estimation error}]
Let $\Vb_{T_0}\triangleq\sum_{i=1}^m\sum_{t=1}^{T_0}\xb_{i,t}\xb_{i,t}^{\top}$ and $\Vb = \Vb_{T_0} + \lambda\Ib$. Let $\widehat{\Ab}$ be the solution of $\argmin_{\Ab}\sum_{i=1}^m l_i(\Ab).$ 
Let $\hat{\ab}_k$ and $\ab_k$ be the $k$-th rows of $\widehat{\Ab}$ and $\Ab$, respectively. Based on Theorem \ref{T: Confidence set of OLS}, we have with probability at least $(1-\delta)$,
\begin{equation}\label{Eq1: closeness of agents' estimates}
    \norm{\hat{\ab}_k - \ab_k}_{\Vb} \leq R\sqrt{d\log \big(\frac{1+mT_0L^2/\lambda}{\delta/n}\big)} + \sqrt{\lambda}L_A,\; \forall k\in[n].
\end{equation}
Knowing that $\forall i \in [m],\;\forall t\in[T_0]$, $\xb_{i,t}= (1-\gamma)\xb^s + \gamma\zeta_{i,t}$, we have $\lambda_{\max}(\xb_{i,t}\xb_{i,t}^{\top})\leq L^2$ and $\mathbb{E}[\xb_{i,t}\xb_{i,t}^{\top}] = (1-\gamma)^2\xb^s\xb^{s\top} + \gamma^2 \sigma_{\zeta}^2\Ib\succeq \gamma^2\sigma_{\zeta}^2\Ib$. Therefore, we have 
\begin{equation}\label{Eq2: closeness of agents' estimates}
    \lambda_{\min}(\mathbb{E}[\Vb_{T_0}]) = \lambda_{\min}(\sum_{i=1}^m\sum_{t=1}^{T_0}\mathbb{E}[\xb_{i,t}\xb_{i,t}^{\top}])\geq mT_0\gamma^2\sigma_{\zeta}^2.
\end{equation}
Based on Equation \eqref{Eq2: closeness of agents' estimates} and Theorem \ref{T: Matrix concentration inequality}, we have 
\begin{equation}\label{Eq3: closeness of agents' estimates}
    \mathbb{P}(\lambda_{\min}(\Vb_{T_0})\leq \epsilon mT_0\gamma^2\sigma_{\zeta}^2) \leq d\exp\big(-(1-\epsilon)^2\frac{mT_0\gamma^2\sigma_{\zeta}^2}{2L^2}\big).
\end{equation}
By setting $\epsilon=\frac{1}{2}$ and $T_0\geq \frac{8L^2}{m\gamma^2\sigma_{\zeta}^2}\log (\frac{d}{\delta})$, from Equation \eqref{Eq3: closeness of agents' estimates}, we have
\begin{equation}\label{Eq4: closeness of agents' estimates}
    \mathbb{P}\big(\lambda_{\min}(\Vb)\geq \frac{1}{2}mT_0\gamma^2\sigma_{\zeta}^2\big)\geq \mathbb{P}\big(\lambda_{\min}(\Vb_{T_0})\geq \frac{1}{2}mT_0\gamma^2\sigma_{\zeta}^2\big)\geq (1-\delta).
\end{equation}
Combining Equations \eqref{Eq1: closeness of agents' estimates} and \eqref{Eq4: closeness of agents' estimates}, we have with probability at least $(1-2\delta)$, 
\begin{equation}\label{Eq5: closeness of agents' estimates}
    \norm{\hat{\ab}_k - \ab_k} \leq \frac{R\sqrt{d\log \big(\frac{1+mT_0L^2/\lambda}{\delta/n}\big)} + \sqrt{\lambda}L_A}{\sqrt{\frac{1}{2}m\gamma^2\sigma_{\zeta}^2T_0}},\; \forall k\in[n].
\end{equation}

Let agent $i$'s local estimate of A at time $t \in [T_0+1,T_0+T_1]$ returned by the {\bf EXTRA} algorithm \cite{shi2015extra} be denoted by $\widehat{\Ab}_{i,t}$. Next we upper bound the distance between $\widehat{\Ab} = \argmin_{\Ab}\sum_{i=1}^m l_i(\Ab)$ and $\widehat{\Ab}_{i,t}$ based on Theorem 3.7 in \cite{shi2015extra} as follows. There exists $0<\tau < 1$ such that 
\begin{equation}\label{Eq6: closeness of agents' estimates}
    \norm{\hat{\ab}^i_{k,t}-\hat{\ab}_k} \leq \nu\tau^{(t-T_0)},\; \forall i \in [m], k \in [n], t \in [T_0+1,\ldots,T_0+T_1]
\end{equation} 
where $\nu$ is a constant. Based on \eqref{Eq5: closeness of agents' estimates}, \eqref{Eq6: closeness of agents' estimates} and our choice of $T_1$ ($T_1=(-\log\tau)^{-1}\log(\nu T^{\rho})$), for $k\in[n]$, $t\in[T_0+1,\ldots,T_0+T_1]$ and $i,j\in[m]$, we have
\begin{equation}\label{Eq7: closeness of agents' estimates}
    \norm{\hat{\ab}^i_{k,t}-\ab_k} \leq \norm{\hat{\ab}^i_{k,t}-\hat{\ab}_k} + \norm{\hat{\ab}_k - \ab_k}
    \leq \frac{1}{T^{\rho}} + \frac{R\sqrt{d\log \big(\frac{1+mT_0L^2/\lambda}{\delta/n}\big)} + \sqrt{\lambda}L_A}{\sqrt{\frac{1}{2}m\gamma^2\sigma_{\zeta}^2T_0}},
\end{equation}
and
\begin{equation}\label{Eq8: closeness of agents' estimates}
    \norm{\hat{\ab}^i_{k,t}-\hat{\ab}^j_{k,t}} \leq \norm{\hat{\ab}^i_{k,t}-\hat{\ab}_k} + \norm{\hat{\ab}_k - \hat{\ab}^j_{k,t}}
    \leq \frac{2}{T^{\rho}}.
\end{equation}
\end{proof}



\begin{lemma}\label{L: Similarity property of different agents' feasible sets}
    Define $$\mathcal{B}_r \triangleq \frac{1}{T^{\rho}} + \frac{R\sqrt{d\log \big(\frac{1+mT_0L^2/\lambda}{\delta/n}\big)} + \sqrt{\lambda}L_A}{\sqrt{\frac{1}{2}m\gamma^2\sigma_{\zeta}^2T_0}}.$$ For each agent $i$, construct $\widehat{\mathcal{X}}^s_i$ based on Equation \eqref{Eq: Agent's feasible set} with $\mathcal{C}_{i,k}$ which follows Equation \eqref{Eq: The ball set w.r.t. an estimate} with $\mathcal{B}_r$. By running Algorithm \ref{alg:Constraint estimation} with user-specified $T_0$ and $T_1=O(\log T^{\rho})$, we have, with probability at least $(1-2\delta)$,
    \begin{equation}
        \norm{\Pi_{\widehat{\mathcal{X}}^s_j}(\xb_i) - \xb_i}\leq \frac{2\sqrt{d}L\mathcal{B}_r}{C(\Ab, \bb)},\;\forall i,j \in [m],
    \end{equation}
    where $\xb_i\in \widehat{\mathcal{X}}^s_i$.
\end{lemma}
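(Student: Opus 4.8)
[Proof proposal for \cref{L: Similarity property of different agents' feasible sets}]
The plan is to sandwich agent $j$'s estimated feasible set $\widehat{\mathcal{X}}^s_j$ between a shrunk copy of the \emph{true} polytope and the true polytope itself, and then to invoke the projection bound of \cref{lem:fereydounian2020safe-lemma}. The first step is to put the robust constraint defining $\widehat{\mathcal{X}}^s_j$ in closed form: since each $\mathcal{C}_{j,k}$ in \eqref{Eq: The ball set w.r.t. an estimate} is a Euclidean ball of radius $\mathcal{B}_r$ centered at $\hat{\ab}^j_k$, the requirement $\Tilde{\ab}_k^{\top}\xb\leq b_k$ for \emph{all} $\Tilde{\ab}_k\in\mathcal{C}_{j,k}$ in \eqref{Eq: Agent's feasible set} is equivalent to $(\hat{\ab}^j_k)^{\top}\xb + \mathcal{B}_r\norm{\xb}\leq b_k$, by maximizing the linear functional over the ball.

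Next I would establish two set inclusions, both holding with probability at least $(1-2\delta)$. Because $\mathcal{B}_r$ is exactly the high-probability estimation radius produced by \cref{L: Bound for estimation error}, we have $\norm{\ab_k-\hat{\ab}^i_k}\leq\mathcal{B}_r$ for every $k\in[n]$, so the true row $\ab_k$ lies in $\mathcal{C}_{i,k}$. Hence the true constraint $\ab_k^{\top}\xb\leq b_k$ is one of the constraints carved out by \eqref{Eq: Agent's feasible set}, giving $\widehat{\mathcal{X}}^s_i\subseteq\mathcal{X}^s$; in particular $\xb_i\in\mathcal{X}^s$, so $\norm{\xb_i}\leq L$ by \cref{A1}. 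For the other inclusion, I would show that the shrunk polytope $\mathcal{X}^s_{\textnormal{in}}$ of \eqref{eq:inner-polytope} with the choice $\tau_{\textnormal{in}}=2L\mathcal{B}_r$ satisfies $\mathcal{X}^s_{\textnormal{in}}\subseteq\widehat{\mathcal{X}}^s_j$: for any $\xb\in\mathcal{X}^s_{\textnormal{in}}\subseteq\mathcal{X}^s$ (so $\norm{\xb}\leq L$), a Cauchy--Schwarz split together with $\norm{\hat{\ab}^j_k-\ab_k}\leq\mathcal{B}_r$ yields
\begin{equation*}
(\hat{\ab}^j_k)^{\top}\xb + \mathcal{B}_r\norm{\xb} \leq \ab_k^{\top}\xb + \norm{\hat{\ab}^j_k-\ab_k}\,\norm{\xb} + \mathcal{B}_r\norm{\xb} \leq \ab_k^{\top}\xb + 2L\mathcal{B}_r \leq (b_k-\tau_{\textnormal{in}}) + 2L\mathcal{B}_r = b_k,
\end{equation*}
so $\xb\in\widehat{\mathcal{X}}^s_j$.

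To close the argument I would use that the distance to a set is monotone decreasing under set inclusion: since $\mathcal{X}^s_{\textnormal{in}}\subseteq\widehat{\mathcal{X}}^s_j$, projecting $\xb_i$ onto the larger set moves it no farther than projecting onto $\mathcal{X}^s_{\textnormal{in}}$. Combining this with $\xb_i\in\mathcal{X}^s$ and \cref{lem:fereydounian2020safe-lemma} gives
\begin{equation*}
\norm{\Pi_{\widehat{\mathcal{X}}^s_j}(\xb_i)-\xb_i} \leq \norm{\Pi_{\mathcal{X}^s_{\textnormal{in}}}(\xb_i)-\xb_i} \leq \frac{\sqrt{d}\,\tau_{\textnormal{in}}}{C(\Ab,\bb)} = \frac{2\sqrt{d}L\mathcal{B}_r}{C(\Ab,\bb)},
\end{equation*}
as claimed. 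One precondition of \cref{lem:fereydounian2020safe-lemma} still needs checking, namely that $\mathcal{X}^s_{\textnormal{in}}$ is non-empty: using the safe baseline action $\xb^s$ with $\ab_k^{\top}\xb^s=b^s_k\leq b_k-\Delta^s$ from \cref{A3}, we see $\xb^s\in\mathcal{X}^s_{\textnormal{in}}$ whenever $\tau_{\textnormal{in}}=2L\mathcal{B}_r\leq\Delta^s$, which holds once $T$ (hence $T_0=\Theta(T^{2/3})$) is large enough to drive $\mathcal{B}_r$ below $\Delta^s/(2L)$, consistent with the horizon condition $T=\Omega(\cdot)$.

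I expect the main obstacle to be conceptual rather than computational: recognizing that the robust (worst-case-over-the-ball) feasible set is squeezed as $\mathcal{X}^s_{\textnormal{in}}\subseteq\widehat{\mathcal{X}}^s_j\subseteq\mathcal{X}^s$ and that projection distance is monotone under inclusion, which is what lets us transfer the cross-agent projection error onto the single-set bound of \cref{lem:fereydounian2020safe-lemma}. The remaining bookkeeping---calibrating $\tau_{\textnormal{in}}=2L\mathcal{B}_r$, verifying both inclusions simultaneously on the high-probability event, and checking non-emptiness---is routine.
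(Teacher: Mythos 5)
Your proposal is correct and follows essentially the same route as the paper's proof: establish the sandwich $\mathcal{X}^s_{\textnormal{in}}\subseteq\widehat{\mathcal{X}}^s_j$ (with $\tau_{\textnormal{in}}=2L\mathcal{B}_r$, via the same Cauchy--Schwarz split) and $\widehat{\mathcal{X}}^s_i\subseteq\mathcal{X}^s$, then combine monotonicity of the projection distance under set inclusion with \cref{lem:fereydounian2020safe-lemma}. Your explicit verification of the non-emptiness precondition and of the closed-form equivalence of the robust constraint are details the paper leaves implicit, but they do not change the argument.
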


\begin{proof}[Proof of Lemma \ref{L: Similarity property of different agents' feasible sets}]
First we show that there exists a mutual shrunk polytope (see Definition \ref{eq:inner-polytope}) subset $\mathcal{X}^{s}_{\textnormal{in}}$ ($\tau_{\textnormal{in}} = 2\mathcal{B}_rL$) for $\widehat{\mathcal{X}}^s_i,\:\forall i \in [m]$. Based on Lemma \ref{L: Bound for estimation error}, with probability at least ${1-2\delta}$, we have for any $\xb\in \mathcal{X}^s_{\textnormal{in}}$,
\begin{equation}\label{Eq1: Similarity property of different agents' feasible sets}
\begin{split}
    \hat{\ab}^{i\top}_k \xb + \mathcal{B}_r\norm{\xb} &= \ab_k^{\top}\xb + (\hat{\ab}^{i}_k - \ab_k)^{\top}\xb + \mathcal{B}_r\norm{\xb}\\
    &\leq \ab_k^{\top}\xb + \norm{\hat{\ab}^{i}_k - \ab_k}\norm{\xb} + \mathcal{B}_r\norm{\xb}\\
    &\leq \ab_k^{\top}\xb + 2\mathcal{B}_r\norm{\xb}\leq \ab_k^{\top}\xb + 2\mathcal{B}_r L\leq b_k,\; \forall k\in [n]\textnormal{ and }\forall i \in [m],
\end{split}
\end{equation}
which implies that $\mathcal{X}^{s}_{\textnormal{in}}\subset \widehat{\mathcal{X}}^s_i,\:\forall i$. 
Based on Lemma \ref{lem:fereydounian2020safe-lemma} and the fact that $\mathcal{X}^{s}_{\textnormal{in}}\subset \widehat{\mathcal{X}}^s_i \textnormal{ and }\widehat{\mathcal{X}}^s_i\subset \mathcal{X}^s,\:\forall i$, we have that $\forall i,j \in [m]$ and any $\xb_i\in \widehat{\mathcal{X}}^s_i$
\begin{equation}\label{Eq2: Similarity property of different agents' feasible sets}
\begin{split}
    \norm{\Pi_{\widehat{\mathcal{X}}^s_j}(\xb_i) - \xb_i} &\leq \norm{\Pi_{\mathcal{X}^{s}_{\textnormal{in}}}(\xb_i) - \xb_i}\\
    &\leq \max_{\xb_i\in \widehat{\mathcal{X}}^s_i}\norm{\Pi_{\mathcal{X}^{s}_{\textnormal{in}}}(\xb_i) - \xb_i}\\
    &\leq \max_{\xb\in \mathcal{X}^s}\norm{\Pi_{\mathcal{X}^{s}_{\textnormal{in}}}(\xb) - \xb}\leq \frac{2\sqrt{d}\mathcal{B}_rL}{C(\Ab, \bb)},
\end{split}    
\end{equation}
where the first and third inequalities are due to the facts that $\mathcal{X}^{s}_{\textnormal{in}}\subset \widehat{\mathcal{X}}^s_i \textnormal{ and }\widehat{\mathcal{X}}^s_i\subset \mathcal{X}^s$, respectively.
\end{proof}

\subsection{Convex Part}\label{sec:conv}
\begin{lemma}\label{L: consecutive distance}
Let Algorithm \ref{alg:D-safe-ogd-linear-constraint} run with step size $\eta>0$ and define $\xb_t\triangleq\frac{1}{m}\sum_{i=1}^m\xb_{i,t}$ and $\yb_t\triangleq\frac{1}{m}\sum_{i=1}^m\yb_{i,t}$. Under Assumptions \ref{A1} to \ref{A4}, we have that $\forall i \in [m]$
\begin{equation*}
\begin{split}
    \norm{\xb_t - \xb_{i,t}}\leq (\frac{2\sqrt{d}L\mathcal{B}_r}{C(\Ab, \bb)} +2\eta G)\frac{\sqrt{m}\beta^{}}{1-\beta^{}} = O\big(\eta + \mathcal{B}_r\big) \text{ and } \norm{\yb_t - \yb_{i,t}}(\frac{2\sqrt{d}L\mathcal{B}_r}{C(\Ab, \bb)} +2\eta G)\frac{\sqrt{m}}{1-\beta},
\end{split}
\end{equation*}
where $\mathcal{B}_r$ follows the definition in Lemma \ref{L: Similarity property of different agents' feasible sets}.
\end{lemma}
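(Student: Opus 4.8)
The plan is to control, for each agent $i$, the deviation of its iterate from the network average, by showing this deviation is governed by (i) the size of each agent's gradient-plus-projection step and (ii) the geometric mixing of $\Pb$. Throughout I write $\delb_{j,t}\triangleq\yb_{j,t}-\xb_{j,t}$ for the per-step displacement of agent $j$. Two preliminary facts drive the argument. First, since $\Pb$ is doubly stochastic, averaging $\xb_{i,t+1}=\sum_j[\Pb]_{ji}\yb_{j,t}$ over $i$ and using unit column sums gives $\xb_{t+1}=\yb_t$; moreover a trivial induction shows each $\xb_{i,t}$ is a convex combination of the safe points $\yb_{\cdot,\cdot}\in\mathcal{X}^s$, hence $\xb_{i,t}\in\mathcal{X}^s$ and $\norm{\nabla f_{i,t}(\xb_{i,t})}\le G$ by \cref{A4}. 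Second, I would initialize all agents at a common feasible point at time $T_s$, so the initial consensus error is zero.

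The key step is a uniform bound on $\norm{\delb_{j,t}}$. By non-expansiveness of the Euclidean projection, $\norm{\yb_{j,t}-\Pi_{\widehat{\mathcal{X}}^s_j}[\xb_{j,t}]}\le\eta\norm{\nabla f_{j,t}(\xb_{j,t})}\le\eta G$, so it remains to bound $\norm{\Pi_{\widehat{\mathcal{X}}^s_j}[\xb_{j,t}]-\xb_{j,t}}$. This is the crux of the distributed-safe difficulty: $\xb_{j,t}=\sum_k[\Pb]_{kj}\yb_{k,t-1}$ mixes points $\yb_{k,t-1}\in\widehat{\mathcal{X}}^s_k$ that live in \emph{different} (though nearby) feasible sets, so $\xb_{j,t}$ need not belong to $\widehat{\mathcal{X}}^s_j$. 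Here I would invoke \cref{L: Similarity property of different agents' feasible sets}: each $\Pi_{\widehat{\mathcal{X}}^s_j}(\yb_{k,t-1})$ lies within $\tfrac{2\sqrt{d}L\mathcal{B}_r}{C(\Ab,\bb)}$ of $\yb_{k,t-1}$, and since $\sum_k[\Pb]_{kj}\Pi_{\widehat{\mathcal{X}}^s_j}(\yb_{k,t-1})\in\widehat{\mathcal{X}}^s_j$, the minimizing property of the projection and the triangle inequality give $\norm{\Pi_{\widehat{\mathcal{X}}^s_j}[\xb_{j,t}]-\xb_{j,t}}\le\sum_k[\Pb]_{kj}\tfrac{2\sqrt{d}L\mathcal{B}_r}{C(\Ab,\bb)}=\tfrac{2\sqrt{d}L\mathcal{B}_r}{C(\Ab,\bb)}$. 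Combining the two pieces yields $\norm{\delb_{j,t}}\le C_0$ for all $j,t$, where $C_0\triangleq\tfrac{2\sqrt{d}L\mathcal{B}_r}{C(\Ab,\bb)}+2\eta G$ (the second $\eta G$ being slack that matches the stated constant).

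With a uniform displacement bound, I would unroll the recursion. Writing $\xb_{i,t+1}=\sum_j[\Pb]_{ji}\xb_{j,t}+\sum_j[\Pb]_{ji}\delb_{j,t}$ and iterating back to $T_s$ expresses $\xb_{i,t+1}=\sum_j[\Pb^{t+1-T_s}]_{ji}\xb_{j,T_s}+\sum_{s=T_s}^{t}\sum_j[\Pb^{t+1-s}]_{ji}\delb_{j,s}$, and the network average $\xb_{t+1}$ is the same expression with every $[\Pb^k]_{ji}$ replaced by $1/m$. Subtracting, the common-initialization term cancels because $\sum_j([\Pb^k]_{ji}-1/m)=0$, leaving $\xb_{i,t+1}-\xb_{t+1}=\sum_{s=T_s}^{t}\sum_j([\Pb^{t+1-s}]_{ji}-1/m)\delb_{j,s}$. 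Applying the geometric mixing bound $\sum_j\abs{[\Pb^k]_{ji}-1/m}\le\sqrt{m}\beta^k$ with $\norm{\delb_{j,s}}\le C_0$ and summing $\sum_{k\ge1}\beta^k=\beta/(1-\beta)$ gives $\norm{\xb_t-\xb_{i,t}}\le C_0\tfrac{\sqrt{m}\beta}{1-\beta}$, which is $O(\eta+\mathcal{B}_r)$. For the $\yb$ bound I would avoid re-unrolling (whose initialization term does not cancel) and instead use $\yb_{i,t}-\yb_t=(\xb_{i,t}-\xb_t)+(\delb_{i,t}-\tfrac1m\sum_j\delb_{j,t})$; the first term is controlled by the bound just proved, and $\norm{\delb_{i,t}-\tfrac1m\sum_j\delb_{j,t}}\le 2C_0\le\sqrt{m}\,C_0$ (for $m\ge4$), so using $\tfrac{\sqrt{m}\beta}{1-\beta}+\sqrt{m}=\tfrac{\sqrt{m}}{1-\beta}$ lands exactly on $C_0\tfrac{\sqrt{m}}{1-\beta}$.

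The main obstacle is the per-step displacement bound of the second paragraph. In a centralized or common-feasible-set setting one simply has $\xb_{j,t}\in\widehat{\mathcal{X}}^s_j$ and $\norm{\delb_{j,t}}\le\eta G$; here the mixing step pulls in iterates from neighboring agents whose estimated safe sets differ, so $\xb_{j,t}$ is generically infeasible for agent $j$ and its projection can move. Quantifying that movement is precisely what the geometric closeness of the feasible sets (\cref{L: Similarity property of different agents' feasible sets}, itself resting on the shrunk-polytope estimate of \cref{L: Bound for estimation error}) buys us, and it is the step where the ``different feasible sets'' difficulty of the distributed safe setting is resolved; everything after it is the standard doubly-stochastic consensus computation.
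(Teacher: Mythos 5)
Your proposal is correct and follows essentially the same route as the paper's proof: bound the per-step projection residual by $\frac{2\sqrt{d}L\mathcal{B}_r}{C(\Ab,\bb)}+\eta G$ using the minimizing property of the projection together with \cref{L: Similarity property of different agents' feasible sets}, then unroll the consensus recursion from a common initialization and apply the geometric mixing bound of $\Pb$. The only (cosmetic) differences are that you fold the gradient step and the projection residual into a single displacement term where the paper keeps them as separate sums, and you derive the $\yb$-bound explicitly rather than by the paper's ``following the same manner.''
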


\begin{proof}
    For the sake of simplicity, we define the following matrices
\begin{equation*}
\begin{split}
    \Xb_t &\triangleq [\xb_{1,t},\ldots,\xb_{m,t}],\;\Yb_t \triangleq [\yb_{1,t},\ldots,\yb_{m,t}],\;\Gb_t\triangleq [\nabla f_{1,t}(\xb_{1,t}),\ldots,\nabla f_{m,t}(\xb_{m,t})],\text{ and }\Rb_t \triangleq [r_{1,t},\ldots,r_{m,t}],
\end{split}
\end{equation*}
where $r_{i,t-1} = \yb_{i,t-1} - \big(\xb_{i,t-1}-\eta\nabla f_{i,t-1}(\xb_{i,t-1})\big)$. Then the update can be expressed as $\Xb_{t} = \Yb_{t-1}\Pb^{} = \big(\Xb_{t-1} - \eta \Gb_{t-1} - \Rb_{t-1}\big)\Pb^{}$.

Expanding the update recursively, we have
\begin{equation}\label{Eq1: consecutive distance}
    \Xb_t = \Xb_{T_s}\Pb^{(t-T_s)} - \eta\sum_{l=1}^{t-T_s}\Gb_{t-l}\Pb^{ l} - \sum_{l=1}^{t-T_s}\Rb_{t-l}\Pb^{ l}.
\end{equation}
Since $\Pb$ is doubly stochastic, we have $\Pb^k\mathbf{1}=\mathbf{1}$ for all $k\geq 1$. Based on the geometric mixing bound of $\Pb$ and Equations \eqref{Eq1: consecutive distance}, we get
\begin{equation*}
\begin{split}
    &\norm{\xb_t - \xb_{i,t}}=\norm{\Xb_t(\frac{1}{m}\mathbf{1}-\eb_i)}\\
    \leq &\norm{\xb_{T_s}-\Xb_{T_s}[\Pb^{(t-T_s)}]_{:,i}} + \eta\sum_{l=1}^{t-T_s}\norm{\Gb_{t-l}(\frac{1}{m}\mathbf{1}-[\Pb^{ l}]_{:,i})}
    +\sum_{l=1}^{t-T_s}\norm{\Rb_{t-l}(\frac{1}{m}\mathbf{1}-[\Pb^{ l}]_{:,i})}\\
    \leq &\sum_{l=1}^{t-T_s}(\eta G)\sqrt{m}\beta^{ l} + \sum_{l=1}^{t-T_s}(\frac{2\sqrt{d}L\mathcal{B}_r}{C(\Ab, \bb)} + \eta G)\sqrt{m}\beta^{ l}\\
    \leq &(\frac{2\sqrt{d}L\mathcal{B}_r}{C(\Ab, \bb)} +2\eta G)\frac{\sqrt{m}\beta^{}}{1-\beta^{}},
\end{split}    
\end{equation*}
where $\norm{\xb_{T_s}-\Xb_{T_s}[\Pb^{t-T_s}]_{:,i}}=0$ by the identical initialization (of all agents with the same action at $T_s$) and based on Lemma \ref{L: Similarity property of different agents' feasible sets},
\begin{equation*}
\begin{split}
    \norm{r_{i,t}} &= \norm{\yb_{i,t} - \big(\xb_{i,t}-\eta\nabla f_{i,t}(\xb_{i,t})\big)}\\
    &\leq \norm{\sum_j [\Pb^{}]_{ji}\Pi_{\widehat{\mathcal{X}}^s_i}[\yb_{j,t-1}] - \big(\sum_j [\Pb^{}]_{ji}\yb_{j,t-1}-\eta\nabla f_{i,t}(\xb_{i,t})\big)}\\
    &\leq \frac{2\sqrt{d}L\mathcal{B}_r}{C(\Ab, \bb)} + \eta G.
\end{split}
\end{equation*}
Following the same manner, it can be shown that 
\begin{equation*}
    \norm{\yb_t - \yb_{i,t}} \leq (\frac{2\sqrt{d}L\mathcal{B}_r}{C(\Ab, \bb)} +2\eta G)\frac{\sqrt{m}}{1-\beta}.
\end{equation*}

\end{proof}

\begin{proof}[Proof of Theorem \ref{T: Dynamic regret bound}]
First, we decompose the individual regret of agent $j$ into three terms:
\begin{equation}\label{Eq1: Dynamic regret bound}
    \sum_{t}\sum_i f_{i,t}(\xb_{j,t}) - \sum_t f_t(\xb^*_t) = \underbrace{\sum_{t=1}^{T_s-1}\sum_i f_{i,t}(\xb_{j,t}) - f_{i,t}(\xb^*_t)}_{\text{Term I}} + \underbrace{\sum_{t=T_s}^{T}\sum_i f_{i,t}(\xb_{j,t}) - f_{i,t}(\Tilde{\xb}^*_t)}_{\text{Term II}} + \underbrace{\sum_{t=T_s}^{T} f_{t}(\Tilde{\xb}^*_t) - f_{t}(\xb^*_t)}_{\text{Term III}},
\end{equation}
where $\Tilde{\xb}^*_t$ is the projection of $\xb^*_t$ on $\mathcal{X}^s_{\textnormal{in}}$, which is a mutual subset of $\{\widehat{\mathcal{X}}^s_i\}_{i \in [m]}$ with $\tau_{\textnormal{in}} = 2\mathcal{B}_rL$ based on Equation \eqref{Eq1: Similarity property of different agents' feasible sets} in Lemma \ref{L: Similarity property of different agents' feasible sets}.

{\bf The upper bound of Term I}:\\
Here we note that by choosing $\gamma \leq \frac{\Delta^s}{LL_A}$, we have $\forall i \in [m]$ and $t\in [1,\ldots,T_0+T_1]$
\begin{equation}\label{Eq2: Dynamic regret bound}
    \ab_k^{\top}\xb_{i,t} = \ab_k^{\top}\left((1-\gamma)\xb^s +\gamma\zeta_{i,t}\right) \leq (1-\gamma)b^s_k + \Delta^s \leq (1-\gamma)b^s_k + (b_k - b^s_k) < b_k,
\end{equation}
which implies the safeness of the action.

Based on the Lipschitz property of the function sequence, we have
\begin{equation}\label{Eq3: Dynamic regret bound}
    \sum_{t=1}^{T_s-1}\sum_i f_{i,t}(\xb_{j,t}) - f_{i,t}(\xb^*_t) \leq \sum_{t=1}^{T_s-1}\sum_i G\norm{\xb_{j,t} - \xb^*_t} \leq 2GLm(T_0+T_1).
\end{equation}
{\bf The upper bound of Term II}:\\
Based on the update rule, $\forall i\in [m]$ and $t\in[T_s,\ldots, T]$ we have
\begin{equation}\label{Eq4: Dynamic regret bound}
\begin{split}
    &f_{i,t}(\xb_{i,t}) - f_{i,t}(\Tilde{\xb}^*_t)\\
    \leq &\nabla f_{i,t}(\xb_{i,t})^{\top}(\xb_{i,t} - \Tilde{\xb}^*_t)\\
    = &\frac{1}{\eta}\left[\frac{1}{2}\eta^2\norm{\nabla f_{i,t}(\xb_{i,t})}^2 + \frac{1}{2}\norm{\xb_{i,t}-\Tilde{\xb}^*_t}^2 - \frac{1}{2}\norm{\xb_{i,t}-\Tilde{\xb}^*_t - \eta \nabla f_{i,t}(\xb_{i,t})}^2\right]\\
    \leq &\frac{1}{\eta}\left[\frac{1}{2}\eta^2\norm{\nabla f_{i,t}(\xb_{i,t})}^2 + \frac{1}{2}\norm{\xb_{i,t}-\Tilde{\xb}^*_t}^2 - \frac{1}{2}\norm{\yb_{i,t} -\Tilde{\xb}^*_t}^2\right]\\
    = &\frac{1}{\eta}\left[\frac{1}{2}\eta^2\norm{\nabla f_{i,t}(\xb_{i,t})}^2 + \frac{1}{2}\norm{\sum_j [\Pb]_{ji}\yb_{j,t-1}-\Tilde{\xb}^*_t}^2 - \frac{1}{2}\norm{\yb_{i,t} -\Tilde{\xb}^*_t}^2\right]\\
    \leq &\frac{1}{\eta}\left[\frac{1}{2}\eta^2\norm{\nabla f_{i,t}(\xb_{i,t})}^2 + \frac{1}{2}\sum_j [\Pb]_{ji}\norm{\yb_{j,t-1}-\Tilde{\xb}^*_t}^2 - \frac{1}{2}\norm{\yb_{i,t} -\Tilde{\xb}^*_t}^2\right],
\end{split}    
\end{equation}
where the second inequality is due to the projection such that $\norm{\yb_{i,t} - \Tilde{\xb}^*_t}\leq \norm{\xb_{i,t} - \eta \nabla f_{i,t}(\xb_{i,t}) - \Tilde{\xb}^*_t}$ and the third inequality is due to the convexity of the square function.

Based on Equation \eqref{Eq4: Dynamic regret bound} and Lemma \ref{L: consecutive distance}, we have
\begin{equation}\label{Eq5: Dynamic regret bound}
\begin{split}
    &f_{i,t}(\xb_{j,t}) - f_{i,t}(\Tilde{\xb}^*_t)\\
    =& f_{i,t}(\xb_{j,t}) - f_{i,t}(\xb_{i,t}) + f_{i,t}(\xb_{i,t}) - f_{i,t}(\Tilde{\xb}^*_t)\\
    \leq& G\norm{\xb_{j,t} - \xb_{i,t}} + f_{i,t}(\xb_{i,t}) - f_{i,t}(\Tilde{\xb}^*_t)\\
    \leq & 2G\big((\frac{2\sqrt{d}L\mathcal{B}_r}{C(\Ab, \bb)} + 2\eta G)\frac{\sqrt{m}\beta}{1-\beta}\big) + \frac{1}{2}\eta\norm{\nabla f_{i,t}(\xb_{i,t})}^2 + \frac{1}{2\eta}\sum_j [\Pb]_{ji}\norm{\yb_{j,t-1}-\Tilde{\xb}^*_t}^2 - \frac{1}{2\eta}\norm{\yb_{i,t} - \Tilde{\xb}^*_t}^2.
\end{split}    
\end{equation}
Summing Equation \eqref{Eq5: Dynamic regret bound} over $i$, we get
\begin{equation}\label{Eq6: Dynamic regret bound}
\begin{split}
    &\sum_i \left(f_{i,t}(\xb_{j,t}) - f_{i,t}(\Tilde{\xb}^*_t)\right)\\
    \leq & 2mG\big((\frac{2\sqrt{d}L\mathcal{B}_r}{C(\Ab, \bb)} + 2\eta G)\frac{\sqrt{m}\beta}{1-\beta}\big) + \frac{\eta}{2}\sum_i \norm{\nabla f_{i,t}(\xb_{i,t})}^2  + \frac{1}{2\eta}\sum_j\norm{\yb_{j,t-1}-\Tilde{\xb}^*_t}^2 - \frac{1}{2\eta}\sum_i\norm{\yb_{i,t} - \Tilde{\xb}^*_t}^2\\
    = & 2mG\big((\frac{2\sqrt{d}L\mathcal{B}_r}{C(\Ab, \bb)} + 2\eta G)\frac{\sqrt{m}\beta}{1-\beta}\big) + \frac{\eta}{2}\sum_i \norm{\nabla f_{i,t}(\xb_{i,t})}^2 + \frac{1}{2\eta}\sum_i \left(\norm{\yb_{i,t-1}}^2 - \norm{\yb_{i,t}}^2 + 2(\yb_{i,t} - \yb_{i,t-1})^{\top}\Tilde{\xb}^*_t\right).
\end{split}    
\end{equation}
Summing Equation \eqref{Eq6: Dynamic regret bound} over $t\in[T_s,\ldots,T]$, we have
\begin{equation}\label{Eq7: Dynamic regret bound}
\begin{split}
    &\sum_{t=T_s}^{T}\sum_i \left(f_{i,t}(\xb_{j,t}) - f_{i,t}(\Tilde{\xb}^*_t)\right)\\
    \leq & \frac{\eta}{2}\sum_{t=T_s}^{T}\sum_i \norm{\nabla f_{i,t}(\xb_{i,t})}^2
    + \frac{1}{2\eta}\sum_i \norm{\yb_{i,T_s-1}}^2 + \frac{1}{\eta}\big(\sum_i \yb_{i,T}^{\top}\Tilde{\xb}_{T}^* - \sum_i \yb_{i,T_s-1}^{\top}\Tilde{\xb}^*_{T_s}\big) + \frac{1}{\eta}\sum_{t=T_s}^{T-1}\sum_i(\Tilde{\xb}^*_{t}-\Tilde{\xb}^*_{t+1})^{\top}\yb_{i,t}\\
    + &2TmG\big((\frac{2\sqrt{d}L\mathcal{B}_r}{C(\Ab, \bb)} + 2\eta G)\frac{\sqrt{m}\beta}{1-\beta}\big).
\end{split}
\end{equation}

{\bf The upper bound of Term III}:\\

Based on Lemma \ref{lem:fereydounian2020safe-lemma}, we have for any $\xb^*_t\in \mathcal{X}^s$ and its projection to $\mathcal{X}^s_{\textnormal{in}}$: $\Tilde{\xb}^*_t$
\begin{equation}\label{Eq8: Dynamic regret bound}
\begin{split}
    \sum_{t=T_s}^T\sum_i \left(f_{i,t}(\Tilde{\xb}^*_t) - f_{i,t}(\xb^*_t)\right) \leq \sum_{t=T_s}^T\sum_i G \norm{\Tilde{\xb}^*_t - \xb^*_t} \leq mT G \frac{2\sqrt{d}L\mathcal{B}_r}{C(\Ab, \bb)}.
\end{split}
\end{equation}

Substituting Equations \eqref{Eq3: Dynamic regret bound}, \eqref{Eq7: Dynamic regret bound} and \eqref{Eq8: Dynamic regret bound} into Equation \eqref{Eq1: Dynamic regret bound}, we get
\begin{equation}\label{Eq9: Dynamic regret bound}
   \begin{split}
        &\sum_t\sum_i \left(f_{i,t}(\xb_{j,t}) - f_{i,t}(\xb^*_t)\right)\\
        \leq & O(T_0 + T_1) + \frac{\eta mT G^2}{2} + \frac{1}{2\eta}\sum_i \norm{\yb_{i,T_s-1}}^2 + \frac{1}{\eta}\big(\sum_i \yb_{i,T}^{\top}\Tilde{\xb}_{T}^* - \sum_i \yb_{i,T_s-1}^{\top}\Tilde{\xb}^*_{T_s}\big) + \frac{1}{\eta}\sum_{t=T_s}^{T-1}\sum_i(\Tilde{\xb}^*_{t}-\Tilde{\xb}^*_{t+1})^{\top}\yb_{i,t}\\
        + &2TmG\big((\frac{2\sqrt{d}L\mathcal{B}_r}{C(\Ab, \bb)} + 2\eta G)\frac{\sqrt{m}\beta}{1-\beta}\big) + mT G \frac{2\sqrt{d}L\mathcal{B}_r}{C(\Ab, \bb)},
    \end{split}
\end{equation}
which is {\color{black} $O(T_0 + T_1 + \frac{1}{\eta} + \frac{1}{\eta}C^*_T + \frac{\beta T\sqrt{\log T_0}}{(1-\beta) \sqrt{T_0}} + \frac{\beta\eta T}{(1-\beta)})$} and the final regret bound is derived by substituting the choices of $\eta$ and $T_0$.
\end{proof}

\subsection{Non-convex Part}\label{sec:nonconv}
\begin{lemma}\label{L: consecutive distance2}
Let Algorithm \ref{alg:D-safe-ogd-linear-constraint} run with step size $\eta>0$ and the max-consensus step after the exploration phase. Under Assumptions \ref{A1} to \ref{A4}, we have that $\forall i \in [m]$
\begin{equation*}
\begin{split}
    \norm{\xb_t - \xb_{i,t}}\leq 2\eta G\frac{\sqrt{m}\beta^{}}{1-\beta^{}},
\end{split}
\end{equation*}
where $\xb_t\triangleq\frac{1}{m}\sum_{i=1}^m\xb_{i,t}$.
\end{lemma}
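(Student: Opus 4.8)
The plan is to mirror the recursion used in the convex Lemma~\ref{L: consecutive distance}, but to exploit the single structural change introduced by the max-consensus step: after the exploration phase all agents share \emph{one common} feasible set $\widehat{\mathcal{X}}^s$ rather than agent-specific sets $\widehat{\mathcal{X}}^s_i$. This is precisely what sharpens the per-step projection residual and removes the $\mathcal{B}_r$ contribution. First I would reuse the matrix notation $\Xb_t,\Yb_t,\Gb_t,\Rb_t$ with $r_{i,t-1}=\yb_{i,t-1}-\big(\xb_{i,t-1}-\eta\nabla f_{i,t-1}(\xb_{i,t-1})\big)$, write the compact update $\Xb_t=\big(\Xb_{t-1}-\eta\Gb_{t-1}-\Rb_{t-1}\big)\Pb$, and unroll it to
\begin{equation*}
    \Xb_t = \Xb_{T_s}\Pb^{(t-T_s)} - \eta\sum_{l=1}^{t-T_s}\Gb_{t-l}\Pb^{l} - \sum_{l=1}^{t-T_s}\Rb_{t-l}\Pb^{l}.
\end{equation*}

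The crux of the argument is the sharper residual bound $\norm{r_{i,t}}\leq \eta G$. Since $\Pb$ is doubly stochastic, $\xb_{i,t}=\sum_j[\Pb]_{ji}\yb_{j,t-1}$ is a convex combination of the iterates $\yb_{j,t-1}$, each of which lies in the \emph{common} set $\widehat{\mathcal{X}}^s$; by convexity of $\widehat{\mathcal{X}}^s$ we conclude $\xb_{i,t}\in\widehat{\mathcal{X}}^s$, hence $\Pi_{\widehat{\mathcal{X}}^s}[\xb_{i,t}]=\xb_{i,t}$. Non-expansiveness of the Euclidean projection then yields
\begin{equation*}
    \norm{r_{i,t}} = \norm{\Pi_{\widehat{\mathcal{X}}^s}\big[\xb_{i,t}-\eta\nabla f_{i,t}(\xb_{i,t})\big] - \Pi_{\widehat{\mathcal{X}}^s}[\xb_{i,t}]} \leq \eta\norm{\nabla f_{i,t}(\xb_{i,t})} \leq \eta G.
\end{equation*}
This is exactly where the common feasible set eliminates the $\tfrac{2\sqrt{d}L\mathcal{B}_r}{C(\Ab,\bb)}$ term that appears in the convex proof, where averaging over distinct sets $\widehat{\mathcal{X}}^s_i$ forced an extra projection penalty.

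Third, I would write $\xb_t-\xb_{i,t}=\Xb_t\big(\tfrac{1}{m}\mathbf{1}-\eb_i\big)$ and bound each contribution from the unrolled update. The initialization term vanishes since after max-consensus all agents begin the optimization phase from an identical action at $T_s$. For the gradient and residual terms I would invoke the geometric mixing bound $\sum_j\big|[\Pb^l]_{ji}-1/m\big|\leq\sqrt{m}\beta^l$ together with the column bounds ($G$ on $\Gb_{t-l}$ via Assumption~\ref{A4}, and $\eta G$ on $\Rb_{t-l}$ from the step above), giving a summand of $\eta G\sqrt{m}\beta^l$ for each. Summing the geometric series $\sum_{l\geq1}\beta^l=\beta/(1-\beta)$ and adding the two contributions produces $2\eta G\tfrac{\sqrt{m}\beta}{1-\beta}$, the claimed bound.

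The main obstacle is really the single observation in the second step: recognizing that collapsing the agent-specific sets into one common set makes every $\xb_{i,t}$ feasible, which reduces the residual to the pure gradient-step magnitude $\eta G$. Once that is established, the remainder is the verbatim mixing-time argument of Lemma~\ref{L: consecutive distance}, only now free of the estimation radius $\mathcal{B}_r$.
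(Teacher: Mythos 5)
Your proposal follows the paper's proof essentially verbatim: the same matrix recursion and unrolling, the same geometric-mixing bound, the vanishing initialization term, and the same key observation that after max-consensus all $\yb_{j,t-1}$ lie in the single convex set $\widehat{\mathcal{X}}^s$, so $\xb_{i,t}=\sum_j[\Pb]_{ji}\yb_{j,t-1}\in\widehat{\mathcal{X}}^s$ and the $\mathcal{B}_r$ penalty disappears from the residual. One identity in your second step is off, however: writing $\zb\triangleq\xb_{i,t}-\eta\nabla f_{i,t}(\xb_{i,t})$, the residual is $r_{i,t}=\Pi_{\widehat{\mathcal{X}}^s}[\zb]-\zb$, whereas $\Pi_{\widehat{\mathcal{X}}^s}[\zb]-\Pi_{\widehat{\mathcal{X}}^s}[\xb_{i,t}]=\yb_{i,t}-\xb_{i,t}$; the two differ by $\eta\nabla f_{i,t}(\xb_{i,t})$, so your claimed equality $\norm{r_{i,t}}=\norm{\Pi_{\widehat{\mathcal{X}}^s}[\zb]-\Pi_{\widehat{\mathcal{X}}^s}[\xb_{i,t}]}$ is false in general, and non-expansiveness bounds the wrong quantity (patching it with the triangle inequality would cost a factor of $2$, giving $4\eta G\sqrt{m}\beta/(1-\beta)$). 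The correct one-line argument --- and the one the paper uses --- is that the projection minimizes the distance to $\widehat{\mathcal{X}}^s$ while $\xb_{i,t}$ is itself a feasible candidate, so $\norm{r_{i,t}}=\norm{\Pi_{\widehat{\mathcal{X}}^s}[\zb]-\zb}\le\norm{\xb_{i,t}-\zb}=\eta\norm{\nabla f_{i,t}(\xb_{i,t})}\le\eta G$. With that substitution the remainder of your argument coincides with Lemma~\ref{L: consecutive distance} and yields the stated bound.
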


\begin{proof}
Similar proof as Lemma \ref{L: consecutive distance}. The only difference is that since the estimated feasible set is common over agents based on the max-consensus step, $\norm{r_{i,t}}$ is upper-bounded as follows:
\begin{equation*}
\begin{split}
    \norm{r_{i,t}} &= \norm{\yb_{i,t} - \big(\xb_{i,t}-\eta\nabla f_{i,t}(\xb_{i,t})\big)}\\
    &\leq \norm{\sum_j [\Pb]_{ji}\yb_{j,t-1} - \big(\sum_j [\Pb]_{ji}\yb_{j,t-1}-\eta\nabla f_{i,t}(\xb_{i,t})\big)}\\
    &\leq \eta G.
\end{split}
\end{equation*}
By following the similar proof in Lemma \ref{L: consecutive distance} based on the expression above, the result is approved.

\end{proof}

\begin{lemma}[Lemma 4 in \cite{ghai2022non}]\label{L: Update deviation between OGD and OMD}
    Suppose Assumptions \ref{A: Geometric relation between phi and q} to \ref{A: Gradient and diameter bounds} hold and $\ub_t = q(\xb_t)$, then $\norm{q(\xb_{t+1}) - \ub_{t+1}}=O(W^4G_F^{3/2}\eta^{3/2})$ based on the following update rule:
\begin{equation*}
\begin{split}
    \ub_{t+1} &= \argmin_{\ub\in\mathcal{X}^{s\prime}} \left\{\nabla \Tilde{f}_t(\ub_t)^{\top}\ub + \frac{1}{\eta} \mathcal{D}_{\phi}(\ub,\ub_t)\right\},\\
    \xb_{t+1} &= \argmin_{\xb\in\mathcal{X}^{s}} \left\{\nabla {f}_t(\xb_t)^{\top}\xb + \frac{1}{2\eta} \norm{\xb - \xb_t}^2\right\}.
\end{split}   
\end{equation*}
\end{lemma}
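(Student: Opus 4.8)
The plan is to realize $q(\xb_{t+1})$ as the exact minimizer of a reparameterized objective over $\mathcal{X}^{s\prime}$ and to show that this objective is uniformly $O(\eta^2)$-close to the OMD objective, after which a minimizer-stability estimate for $\tfrac1\eta$-strongly convex functions converts the $O(\eta^2)$ value gap into the claimed $O(\eta^{3/2})$ distance. Substituting $\xb = q^{-1}(\ub)$ into the OGD subproblem and using that $q$ is a bijection from $\mathcal{X}^s$ onto $\mathcal{X}^{s\prime}$ (Assumption \ref{A: Properties of mapping q}), the point $\vb_{t+1}\triangleq q(\xb_{t+1})$ is exactly
\begin{equation*}
\vb_{t+1} = \argmin_{\ub\in\mathcal{X}^{s\prime}} F(\ub), \qquad F(\ub)\triangleq \nabla f_t(\xb_t)^{\top}q^{-1}(\ub) + \frac{1}{2\eta}\norm{q^{-1}(\ub) - \xb_t}^2,
\end{equation*}
while the OMD iterate is $\ub_{t+1} = \argmin_{\ub\in\mathcal{X}^{s\prime}} G(\ub)$ with $G(\ub)\triangleq \nabla\Tilde f_t(\ub_t)^{\top}\ub + \frac1\eta\mathcal{D}_{\phi}(\ub,\ub_t)$. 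Since $\phi$ is $1$-strongly convex (Assumption \ref{A: Several Lipschitz assumptions}), $G$ is $\tfrac1\eta$-strongly convex.

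The core of the argument is a Taylor expansion of $F$ about $\ub_t = q(\xb_t)$ that matches it to $G$. Expanding $q^{-1}$ to second order and using the chain-rule identity $\nabla f_t(\xb_t) = J_q(\xb_t)^{\top}\nabla\Tilde f_t(\ub_t)$ (from $f_t = \Tilde f_t\circ q$), the linear part of $F$ becomes $\nabla f_t(\xb_t)^{\top}J_{q^{-1}}(\ub_t)(\ub-\ub_t) = \nabla\Tilde f_t(\ub_t)^{\top}(\ub-\ub_t)$, reproducing the linear part of $G$ up to an $O(WG_F\norm{\ub-\ub_t}^2)$ remainder. For the quadratic part, expanding $q^{-1}$ gives $\norm{q^{-1}(\ub)-\xb_t}^2 = (\ub-\ub_t)^{\top}J_{q^{-1}}(\ub_t)^{\top}J_{q^{-1}}(\ub_t)(\ub-\ub_t) + O(\norm{\ub-\ub_t}^3)$, and the metric identity $[\nabla^2\phi(\ub_t)]^{-1} = J_q(\xb_t)J_q(\xb_t)^{\top}$ of Assumption \ref{A: Geometric relation between phi and q}, together with the diagonality of $J_q$ (Assumption \ref{A: Properties of mapping q}), yields $J_{q^{-1}}(\ub_t)^{\top}J_{q^{-1}}(\ub_t) = \nabla^2\phi(\ub_t)$; this exactly matches the second-order term of $\frac1\eta\mathcal{D}_{\phi}(\ub,\ub_t)$, whose own cubic remainder is $O(\norm{\ub-\ub_t}^3)$. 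Hence $F$ and $G$ agree at linear and quadratic order, leaving only remainders controlled by the third derivatives of $q^{-1}$ and $\phi$ (bounded by $W$ in Assumption \ref{A: Several Lipschitz assumptions}) and by $G_F$.

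To turn this into a bound I would localize. Both minimizers lie within a ball $\mathcal{B}$ of radius $O(\eta\,W^2 G_F)$ about $\ub_t$, since each is a single mirror/gradient step of magnitude $O(\eta G_F)$ (using $\norm{\nabla\Tilde f_t}\le G_F$, $\norm{\nabla f_t}\le WG_F$, and $\norm{[\nabla^2\phi]^{-1}}\le W^2$). Restricting both minimizations to $\mathcal{X}^{s\prime}\cap\mathcal{B}$ leaves $\ub_{t+1}$ and $\vb_{t+1}$ unchanged, and on $\mathcal{B}$ the preceding expansion gives $\sup_{\ub\in\mathcal{X}^{s\prime}\cap\mathcal{B}}\abs{F(\ub)-G(\ub)} \le \epsilon$ with $\epsilon = O(\eta^2 W^7 G_F^3)$. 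Finally I would invoke the stability estimate: since $G$ is $\tfrac1\eta$-strongly convex with minimizer $\ub_{t+1}$ and $\vb_{t+1}$ minimizes $F$, strong convexity gives $G(\vb_{t+1}) \ge G(\ub_{t+1}) + \tfrac1{2\eta}\norm{\vb_{t+1}-\ub_{t+1}}^2$, while $G(\vb_{t+1}) \le F(\vb_{t+1}) + \epsilon \le F(\ub_{t+1}) + \epsilon \le G(\ub_{t+1}) + 2\epsilon$; combining the two yields $\norm{\vb_{t+1}-\ub_{t+1}} \le 2\sqrt{\eta\epsilon} = O(W^4 G_F^{3/2}\eta^{3/2})$.

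The main obstacle I expect is the exact cancellation in the second paragraph: one must expand $q^{-1}$ and the Bregman divergence to the correct order and verify that the metric identity annihilates the entire quadratic discrepancy between $\frac1{2\eta}\norm{q^{-1}(\ub)-\xb_t}^2$ and $\frac1\eta\mathcal{D}_{\phi}(\ub,\ub_t)$, leaving only genuinely cubic remainders; any surviving quadratic mismatch would scale like $\frac1\eta O(\norm{\ub-\ub_t}^2) = O(\eta)$ and destroy the rate. A secondary point is justifying the localization uniformly (so that restricting to $\mathcal{B}$ does not displace the minimizers and the Taylor remainders stay bounded), which relies on compactness of $\mathcal{X}^{s\prime}$ and the derivative bounds of Assumption \ref{A: Several Lipschitz assumptions}. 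It is precisely this value-gap stability step, rather than a sharper gradient comparison, that produces the half-power and hence the $\eta^{3/2}$ rate instead of $\eta^{2}$.
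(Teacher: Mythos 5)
The paper never proves this statement: it is imported verbatim as Lemma~4 of \cite{ghai2022non} and used as a black box in the proof of Lemma~\ref{L: Update deviation between OGD and OMD (distributed)}, so there is no in-paper proof to compare against. Judged on its own merits, your argument is correct and complete. The reparameterization step is exact ($q$ is a bijection from $\mathcal{X}^s$ onto $\mathcal{X}^{s\prime}$, so $\vb_{t+1}=q(\xb_{t+1})$ does minimize $F$ over $\mathcal{X}^{s\prime}$); the linear terms cancel by the chain rule $\nabla f_t(\xb_t)^{\top}J_{q^{-1}}(\ub_t)=\nabla \Tilde{f}_t(\ub_t)^{\top}$; and the quadratic cancellation rests on $J_{q^{-1}}(\ub_t)^{\top}J_{q^{-1}}(\ub_t)=\bigl(J_q(\xb_t)J_q(\xb_t)^{\top}\bigr)^{-1}=\nabla^2\phi(\ub_t)$, which in fact holds for any invertible Jacobian --- diagonality of $J_q$ is not needed at this step. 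Your localization is also sound: objective decrease at each prox problem gives $\norm{\xb_{t+1}-\xb_t}\le 2\eta\norm{\nabla f_t(\xb_t)}\le 2\eta WG_F$ and $\norm{\ub_{t+1}-\ub_t}\le 2\eta G_F$ (via $1$-strong convexity of $\phi$), so both minimizers lie within $O(\eta W^2G_F)$ of $\ub_t$ and you only need $\abs{F-G}\le\epsilon$ at those two points. The final step is the one that produces the half-power, and you execute it correctly: the variational inequality $\nabla G(\ub_{t+1})^{\top}(\vb_{t+1}-\ub_{t+1})\ge 0$ plus $\tfrac1\eta$-strong convexity of $G$ yields $\norm{\vb_{t+1}-\ub_{t+1}}\le 2\sqrt{\eta\epsilon}$.

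One bookkeeping nit: the cubic remainder of $\tfrac{1}{2\eta}\norm{q^{-1}(\ub)-\xb_t}^2$ carries a factor $\norm{J_{q^{-1}}(\ub_t)}\le W$ from the cross term times the Hessian bound $W$ on $q^{-1}$, so on the ball of radius $O(\eta W^2G_F)$ the value gap is $\epsilon=O(\eta^2W^8G_F^3)$ rather than your $W^7$; this is actually fortunate, since $2\sqrt{\eta\epsilon}$ then reproduces exactly the stated constant $O(W^4G_F^{3/2}\eta^{3/2})$. Your closing observation is also the right diagnosis of why the rate is $\eta^{3/2}$ and not $\eta^2$: the distance is recovered from a value gap through strong convexity, which costs a square root, and any uncancelled quadratic mismatch between the two objectives would contribute $\tfrac1\eta O(\norm{\ub-\ub_t}^2)=O(\eta)$ and destroy the bound --- which is precisely why the metric identity of Assumption~\ref{A: Geometric relation between phi and q} is indispensable.
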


\begin{theorem}[Theorem 7 in \cite{ghai2022non}]\label{T: Theorem 7 in ghai2022non}
    Given a convex and compact domain $\mathcal{X}\subset \mathcal{X}^s$, and not necessarily convex loss $f_t(\cdot)$ satisfying Assumption \ref{A: Gradient and diameter bounds}. When Assumption \ref{A: Properties of mapping q} is met, there exists an OMD object with convex loss $\Tilde{f}_t(\cdot)$, a convex domain and a strongly convex regularization $\phi$ satisfying Assumption \ref{A: Geometric relation between phi and q}.
\end{theorem}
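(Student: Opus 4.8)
The plan is to read this as a constructive existence statement. Assumption~\ref{A: Properties of mapping q} already supplies two of the three components of the target OMD object, so the only real work lies in building a strongly convex regularizer $\phi$ whose Hessian is pinned down by the mapping $q$ through Assumption~\ref{A: Geometric relation between phi and q}. First I would extract the surrogate loss and the domain directly from Assumption~\ref{A: Properties of mapping q}: the first bullet gives a convex $\Tilde{f}_t$ with $f_t=\Tilde{f}_t\circ q$, so on $\mathcal{X}^{\prime}$ I set $\Tilde{f}_t(\ub)\triangleq f_t(q^{-1}(\ub))$, and the third bullet guarantees that the reparameterized domain $\mathcal{X}^{\prime}\triangleq q(\mathcal{X})$ is convex and compact. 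What remains is to manufacture $\phi$ on $\mathcal{X}^{\prime}$.

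The key observation I would exploit is that the \emph{diagonal} Jacobian in Assumption~\ref{A: Properties of mapping q} forces $q$ to act coordinate-wise: $[J_q(\xb)]_{ij}=0$ for $i\neq j$ means $q_i$ depends only on $x_i$, so $q(\xb)=(q_1(x_1),\ldots,q_d(x_d))$ with $[J_q(\xb)]_{ii}=q_i'(x_i)$. Hence $J_q(\xb)J_q(\xb)^{\top}$ is diagonal, and Assumption~\ref{A: Geometric relation between phi and q} requires $\nabla^2\phi(\ub)=\mathrm{diag}\big(1/(q_i'(x_i))^2\big)$ at $\ub=q(\xb)$. A diagonal Hessian points to a \emph{separable} regularizer $\phi(\ub)=\sum_{i=1}^d\phi_i(u_i)$, and I would build each $\phi_i$ by integrating its prescribed curvature twice. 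Using $(q_i^{-1})'(u_i)=1/q_i'(q_i^{-1}(u_i))$, the requirement reduces to $\phi_i''(u_i)=\big((q_i^{-1})'(u_i)\big)^2$, which I would realize as
\begin{equation*}
    \phi_i(u_i)\triangleq\int_{u_i^0}^{u_i}\int_{u_i^0}^{s}\big((q_i^{-1})'(r)\big)^2\,dr\,ds
\end{equation*}
for a fixed base point $u_i^0$. Since $q$ is a $C^3$-diffeomorphism, $(q_i^{-1})'$ is $C^2$, so the integrals are well defined and produce $\phi\in C^4(\mathcal{X}^{\prime})$ with $\nabla^2\phi(\ub)=\big(J_q(\xb)J_q(\xb)^{\top}\big)^{-1}$ exactly as Assumption~\ref{A: Geometric relation between phi and q} demands.

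To finish I would check two things. For strong convexity, $q_i'$ is continuous and nonvanishing on the compact interval $q_i^{-1}(\mathcal{X}^{\prime})$, so $\phi_i''=((q_i^{-1})')^2$ is bounded below by a positive constant $\mu$, giving $\nabla^2\phi\succeq\mu\Ib$. The boundedness clauses of Assumption~\ref{A: Gradient and diameter bounds} then come for free: $\Tilde{f}_t$ inherits a bounded gradient via the chain rule and the bounded derivatives of $q^{-1}$, while $\mathcal{D}_{\phi}$ is continuous on the compact set $\mathcal{X}^{\prime}\times\mathcal{X}^{\prime}$ and hence bounded by some $D^{\prime}$. The triple $(\Tilde{f}_t,\mathcal{X}^{\prime},\phi)$ is then the claimed OMD object.

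I expect the genuine obstacle to be \emph{integrability}: prescribing $\nabla^2\phi=M$ for a matrix field $M$ is overdetermined and solvable only if the third-order symmetry conditions $\partial_k M_{ij}=\partial_j M_{ik}$ hold, and a generic $J_qJ_q^{\top}$ need not satisfy them. The diagonal-Jacobian assumption is precisely what removes this difficulty, since then $M$ is diagonal with each $M_{ii}$ depending only on $u_i$, making every cross-derivative vanish; this is exactly why the separable double-integral construction is guaranteed to succeed and is the crux I would emphasize.
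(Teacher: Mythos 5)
You should first note that the paper offers no proof of this statement to compare against: it is imported verbatim as Theorem 7 of \cite{ghai2022non} and invoked as a black box (together with \cref{L: Update deviation between OGD and OMD}) in the proof of \cref{T: Dynamic regret bound (non-convex)}. Measured against the original source, your construction is correct and is essentially the intended argument: since $J_q$ is diagonal on a convex, hence connected, domain, $q$ acts coordinate-wise, so the prescribed Hessian $\nabla^2\phi(\ub)=\big(J_q(\xb)J_q(\xb)^{\top}\big)^{-1}$ is diagonal with $i$-th entry $\big((q_i^{-1})'(u_i)\big)^2$ depending only on $u_i$, and your separable double integral yields a $C^4$ regularizer meeting \cref{A: Geometric relation between phi and q} exactly; compactness of $\Xc' = q(\Xc)$ (third bullet of \cref{A: Properties of mapping q}) bounds $q_i'$ above and hence bounds $\phi_i''$ below by a positive constant, giving strong convexity, while the convexity of $\Tilde{f}_t$ and of the domain are handed to you directly by the first and third bullets. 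Your closing remark is also the right thing to emphasize: prescribing $\nabla^2\phi = M$ for a matrix field is overdetermined unless the symmetry conditions $\partial_k M_{ij}=\partial_j M_{ik}$ hold, and the diagonal-Jacobian hypothesis is precisely what collapses them, which explains why \cref{A: Properties of mapping q} insists on diagonality. Two minor cautions: the strong-convexity modulus you obtain is some $\mu>0$ rather than the modulus $1$ later posited in \cref{A: Several Lipschitz assumptions}, so a harmless rescaling of $\phi$ (absorbed into $\eta$) is needed before chaining your construction into the regret analysis; and the bounded-gradient clause of \cref{A: Gradient and diameter bounds} is a standing hypothesis on $\Tilde{f}_t$ in the theorem statement, so your chain-rule derivation of it, while fine, is not strictly required.
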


\begin{lemma}\label{L: Update deviation between OGD and OMD (distributed)}
    Suppose Assumptions \ref{A: Geometric relation between phi and q} to \ref{A: Gradient and diameter bounds} hold and $\ub_{i,t} = q(\xb_{i,t}),\;\forall i\in[m]$, then 
    $$\norm{q(\xb_{i,t+1}) - \ub_{i,t+1}}= O(\eta^{3/2})$$
    based on the following update rule:

\begin{equation*}
\begin{split}
    \zb_{i,t} &= \argmin_{\ub\in\widehat{X}^{s\prime}}  \left\{\nabla \Tilde{f}_{i,t}(\ub_{i,t})^{\top}\ub + \frac{1}{\eta} \mathcal{D}_{\phi}(\ub,\ub_{i,t}) \right\},\\
    \ub_{i,t+1} &= \sum_j [\Pb^{}]_{ji} \zb_{j,t},\\
    \yb_{i, t} &= \argmin_{\xb\in\widehat{X}^{s}} \left\{ \nabla f_{i,t}(\xb_{i,t})^{\top}\xb + \frac{1}{2\eta} \norm{\xb - \xb_{i,t}}^2\right\},\\
    \xb_{i,t+1} &= \sum_j [\Pb^{}]_{ji} \yb_{j,t}.
\end{split}    
\end{equation*}
\end{lemma}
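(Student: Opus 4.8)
The plan is to compare the $q$-image of the distributed OGD iterate with the distributed OMD iterate by isolating two independent sources of error: the per-agent discrepancy between the local OGD and OMD steps, and the error incurred when the nonlinear map $q$ is pushed through the (linear) consensus averaging. Writing $\bar{\yb}_i \triangleq \sum_j [\Pb]_{ji}\yb_{j,t} = \xb_{i,t+1}$ and using $\ub_{i,t+1} = \sum_j [\Pb]_{ji}\zb_{j,t}$, I would start from the triangle inequality
\begin{equation*}
\norm{q(\xb_{i,t+1}) - \ub_{i,t+1}} \leq \underbrace{\norm{q\Big(\sum_j [\Pb]_{ji}\yb_{j,t}\Big) - \sum_j [\Pb]_{ji}q(\yb_{j,t})}}_{\text{(I): consensus nonlinearity}} + \underbrace{\norm{\sum_j [\Pb]_{ji}\big(q(\yb_{j,t}) - \zb_{j,t}\big)}}_{\text{(II): local deviation}}.
\end{equation*}

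For term (II), I would observe that for each agent $j$ the pair $(\yb_{j,t},\zb_{j,t})$ is exactly one local OGD/OMD update started from $(\xb_{j,t},\ub_{j,t})$ with $\ub_{j,t}=q(\xb_{j,t})$, over the common estimated domains $\widehat{X}^s$ and $\widehat{X}^{s\prime}=q(\widehat{X}^s)$. Hence the centralized \cref{L: Update deviation between OGD and OMD} applies verbatim to each summand, giving $\norm{q(\yb_{j,t})-\zb_{j,t}}=O(\eta^{3/2})$. Since $\Pb$ is doubly stochastic, $\sum_j [\Pb]_{ji}=1$, so term (II) is a convex combination of quantities that are each $O(\eta^{3/2})$ and is therefore itself $O(\eta^{3/2})$.

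For term (I), I would exploit that after the max-consensus step all agents project onto a \emph{common} set, so the argument of \cref{L: consecutive distance2} (together with its $\yb$-analogue, obtained exactly as the $\yb$-bound in \cref{L: consecutive distance}) yields $\norm{\yb_{j,t}-\bar{\yb}_i}=O(\eta)$ for every $j$. A second-order Taylor expansion of the $C^3$ map $q$ about $\bar{\yb}_i$ gives $q(\yb_{j,t}) = q(\bar{\yb}_i) + J_q(\bar{\yb}_i)(\yb_{j,t}-\bar{\yb}_i) + O(\norm{\yb_{j,t}-\bar{\yb}_i}^2)$, and taking the $[\Pb]_{ji}$-weighted sum annihilates the linear term because $\sum_j [\Pb]_{ji}(\yb_{j,t}-\bar{\yb}_i)=0$ by the definition of $\bar{\yb}_i$. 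The Hessian of $q$ is bounded on the compact set $\mathcal{X}^s$, so term (I) is $O(\max_j\norm{\yb_{j,t}-\bar{\yb}_i}^2)=O(\eta^2)$, which is dominated by $O(\eta^{3/2})$ for small $\eta$. Adding the two bounds gives $\norm{q(\xb_{i,t+1})-\ub_{i,t+1}}=O(\eta^{3/2})$.

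I expect term (I) to be the main obstacle: the nonlinearity of $q$ does not commute with consensus, so the bound hinges on first establishing that the agents' post-gradient iterates $\{\yb_{j,t}\}_j$ remain within an $O(\eta)$ ball (which relies on the common feasible set produced by max-consensus and the non-expansiveness of projection), and then showing the first-order term cancels so that only the quadratic $O(\eta^2)$ contribution survives. A secondary technical point is checking that \cref{L: Update deviation between OGD and OMD} transfers from $\mathcal{X}^s,\mathcal{X}^{s\prime}$ to the estimated domains $\widehat{X}^{s},\widehat{X}^{s\prime}$, which holds because these inherit convexity and compactness via \cref{A: Properties of mapping q}.
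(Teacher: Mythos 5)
Your proposal is correct and follows essentially the same route as the paper's proof: the same triangle-inequality split into a per-agent OGD/OMD deviation term (handled by the centralized \cref{L: Update deviation between OGD and OMD}) and a consensus-nonlinearity term (handled by a second-order Taylor expansion of $q$ with an $O(\eta)$ diameter bound on $\{\yb_{j,t}\}_j$ from \cref{L: consecutive distance2} and non-expansiveness of projection onto the common set). The only cosmetic difference is that you expand $q$ about the weighted average $\bar{\yb}_i$ while the paper expands about an arbitrary point of the convex hull; both make the linear term cancel and leave the $O(\eta^2)$ remainder.
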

\begin{proof}
    We first upper bound $\norm{q(\xb_{i,t+1}) - \ub_{i,t+1}}$ as follows
    \begin{equation}\label{Eq1: Update deviation between OGD and OMD (distributed)}
    \begin{split}
        &\norm{q(\xb_{i,t+1}) - \ub_{i,t+1}}\\
        \leq &\norm{\sum_j [\Pb^{}]_{ji} \zb_{j,t} - \sum_j [\Pb^{}]_{ji} q(\yb_{j,t})} + \norm{\sum_j [\Pb^{}]_{ji} q(\yb_{j,t}) - q(\sum_j [\Pb^{}]_{ji} \yb_{j,t})}.
    \end{split}    
    \end{equation}
    To bound $\norm{\sum_j [\Pb^{}]_{ji} q(\yb_{j,t}) - q(\sum_j [\Pb^{}]_{ji} \yb_{j,t})}$, we consider the Taylor expansion of $q(\yb)$ w.r.t. a point $\hat{\yb}$ in the convex hull of $\{\yb_{i,t}\}_i$:
    \begin{equation}\label{Eq2: Update deviation between OGD and OMD (distributed)}
    \resizebox{.9\hsize}{!}
    {$   
        \begin{aligned}
            &\norm{\sum_j [\Pb^{}]_{ji} q(\yb_{j,t}) - q(\sum_j [\Pb^{}]_{ji} \yb_{j,t})}\\
            \leq &\norm{\sum_j [\Pb^{}]_{ji}\bigg(q(\hat{\yb}) + J_{q}(\hat{\yb})(\yb_{j,t} - \hat{\yb}) + O\big((\yb_{j,t} - \hat{\yb})^2\big)\bigg) - \bigg(q(\hat{\yb}) + J_{q}(\hat{\yb})(\sum_j [\Pb^{}]_{ji} \yb_{j,t} - \hat{\yb}) + O\big((\sum_j [\Pb^{}]_{ji} \yb_{j,t} - \hat{\yb})^2\big)\bigg)}\\
            \leq & \norm{\sum_j [\Pb]_{ji}O\big((\yb_{j,t} - \hat{\yb})^2} + \norm{O\big((\sum_j [\Pb^{}]_{ji} \yb_{j,t} - \hat{\yb})^2\big)}\\
            \leq &2\norm{D^2_{\{\yb_{i,t}\}_i}},
        \end{aligned}    
    $}
    \end{equation}
    where $D_{\{\yb_{i,t}\}}$ denotes the diameter of the convex hull of $\{\yb_{i,t}\}$ and is upper bounded as follows
    \begin{equation}\label{Eq3: Update deviation between OGD and OMD (distributed)}
    \begin{split}
        D_{\{\yb_{i,t}\}_i} &= \max_{(i,j)} \norm{\yb_{i,t} - \yb_{j,t}}\\
        &= \max_{(i,j)} \norm{\Pi_{\widehat{X}^s}\big(\xb_{i,t} - \eta\nabla f_{i,t}(\xb_{i,t})\big) - \Pi_{\widehat{X}^s}\big(\xb_{j,t} - \eta\nabla f_{j,t}(\xb_{j,t})\big)}\\
        &\leq \max_{(i,j)} \norm{\big(\xb_{i,t} - \eta\nabla f_{i,t}(\xb_{i,t})\big) - \big(\xb_{j,t} - \eta\nabla f_{j,t}(\xb_{j,t})\big)}\\
        &\leq \frac{4\sqrt{m}\beta G}{1-\beta}\eta + 2G\eta = O(\eta),
    \end{split}
    \end{equation}
    where the last inequality is based on Lemma \ref{L: consecutive distance2} and the Lipschitz continuity of the function sequence.

    Substituting Equations \eqref{Eq2: Update deviation between OGD and OMD (distributed)}, \eqref{Eq3: Update deviation between OGD and OMD (distributed)} into Equation \eqref{Eq1: Update deviation between OGD and OMD (distributed)} and based on Lemma \ref{L: Update deviation between OGD and OMD}, we have
    \begin{equation}\label{Eq4: Update deviation between OGD and OMD (distributed)}
    \begin{split}
        \norm{q(\xb_{i,t+1}) - \ub_{i,t+1}}\leq &\norm{\sum_j [\Pb^{}]_{ji} \zb_{j,t} - \sum_j [\Pb^{}]_{ji} q(\yb_{j,t})} + \norm{\sum_j [\Pb^{}]_{ji} q(\yb_{j,t}) - q(\sum_j [\Pb^{}]_{ji} \yb_{j,t})}\\
        \leq &O(W^4G_F^{3/2}\eta^{3/2}) + O(\eta^2) = O(\eta^{3/2}),
    \end{split}
    \end{equation}
    when $\eta$ is small enough.
\end{proof}

\begin{algorithm}[!h]
\caption{Distributed online mirror descent}
\label{alg:D-OMD}
\begin{algorithmic}[1]
    \STATE {\bf Distributed online mirror descent:}
    \STATE Let $T_s\triangleq(T_0+T_1 + D_G + 1)$.
    \FOR{$t=T_s,\ldots,T$}
        \FOR{$i=1,2,\ldots,m$}
            \STATE{\color{black} \begin{equation*}
                \zb_{i,t} = \argmax_{\zb \in \widehat{\mathcal{X}}^{s\prime}} \nabla \Tilde{f}_{i,t}(\ub_{i,t})^{\top}(\zb-\ub_{i,t}) + \frac{1}{\eta} \mathcal{D}_{\phi}(\zb,\ub_{i,t}),         \left(\Tilde{f}_{i,t}(\ub) = \Tilde{f}_{i,t}\big(q(\xb)\big),\; \ub_{i,t} = q(\xb_{i,t}).\right) 
            \end{equation*}}
        \ENDFOR
        \STATE For all $i\in[m]$, \begin{equation*}
                \ub^{\prime}_{i,t+1} = 
                \sum\limits_{j=1}^m[\Pb^{}]_{ji}\zb_{j,t}.
            \end{equation*}
    \ENDFOR
\end{algorithmic}
\end{algorithm}

\begin{proof}[Proof of Theorem \ref{T: Dynamic regret bound (non-convex)}]
As the proof of Theorem \ref{T: Dynamic regret bound}, we decompose the individual regret into three terms:
\begin{equation}\label{Eq1: Dynamic regret bound (non-convex)}
    \sum_{t}\sum_i f_{i,t}(\xb_{j,t}) - \sum_t f_t(\xb^*_t) = \underbrace{\sum_{t=1}^{T_s-1}\sum_i f_{i,t}(\xb_{j,t}) - f_{i,t}(\xb^*_t)}_{\text{Term I}} + \underbrace{\sum_{t=T_s}^{T}\sum_i f_{i,t}(\xb_{j,t}) - f_{i,t}(\Tilde{\xb}^*_t)}_{\text{Term II}} + \underbrace{\sum_{t=T_s}^{T} f_{t}(\Tilde{\xb}^*_t) - f_{t}(\xb^*_t)}_{\text{Term III}},
\end{equation}
where $\Tilde{\xb}^*_t$ is the projection of $\xb^*_t$ on $\mathcal{X}^s_{\textnormal{in}}$, which is a subset of $\widehat{\mathcal{X}}^s$ with $\tau_{\textnormal{in}} = 2\mathcal{B}_rL$ based on Equation \eqref{Eq1: Similarity property of different agents' feasible sets}. (By applying the max-consensus step with a finite number of iterations, $\widehat{\mathcal{X}}^s$ is constructed using the estimate with the maximum norm over agents' estimates.)

{\bf The upper bound of Term I}:\\
Likewise, during the estimation phase, $\gamma$ is chosen to be less than $\frac{\Delta^s}{LL_A}$ to ensure the safeness of each agent's action, and based on the Lipschitz property we have
\begin{equation}\label{Eq2: Dynamic regret bound (non-convex)}
\resizebox{\hsize}{!}
    {$   
    \sum_{t=1}^{T_s-1}\sum_i f_{i,t}(\xb_{j,t}) - f_{i,t}(\xb^*_t) = \sum_{t=1}^{T_s-1}\sum_i \Tilde{f}_{i,t}\big(q(\xb_{j,t})\big) - \Tilde{f}_{i,t}\big(q(\xb^*_t)\big) \leq \sum_{t=1}^{T_s-1}\sum_i G_F W\norm{\xb_{j,t} - \xb^*_t} \leq 2G_F WLm(T_0+T_1+D_G).
    $}
\end{equation}
{\bf The upper bound of Term II}:\\
Define $\widehat{\mathcal{X}}^{s\prime}\triangleq\{q(\xb)| \xb\in \widehat{\mathcal{X}}^{s}\}$, (same for $\mathcal{X}^s_{\text{in}}$ and $\mathcal{X}^s$). Then for any $q(\Tilde{\xb}^*_t)=\Tilde{\ub}^*_t\in \mathcal{X}^{s\prime}_{\textnormal{in}}$, based on Algorithm \ref{alg:D-OMD} we have
\begin{equation}\label{Eq3: Dynamic regret bound (non-convex)}
\begin{split}
    \eta \left(f_{i,t}(\xb_{i,t}) - f_{i,t}(\Tilde{\xb}^*_t)\right)&=\eta \left(\Tilde{f}_{i,t}(\ub_{i,t}) - \Tilde{f}_{i,t}(\Tilde{\ub}^*_t)\right)\\
    &\leq \eta \nabla \Tilde{f}_{i,t}(\ub_{i,t})^{\top}(\ub_{i,t} - \Tilde{\ub}^*_t)\\
    &= \left(\nabla \phi(\ub_{i,t}) - \nabla \phi(\zb_{i,t}) - \eta \nabla \Tilde{f}_{i,t}(\ub_{i,t})\right)^{\top}(\Tilde{\ub}^*_t - \zb_{i,t})\\
    &+ \left(\nabla \phi(\zb_{i,t}) - \nabla \phi(\ub_{i,t})\right)^{\top}(\Tilde{\ub}^*_t - \zb_{i,t}) + \eta \nabla\Tilde{f}_{i,t}(\ub_{i,t})^{\top}(\ub_{i,t} - \zb_{i,t})\\
    &\leq \left(\nabla \phi(\zb_{i,t}) - \nabla \phi(\ub_{i,t})\right)^{\top}(\Tilde{\ub}^*_t - \zb_{i,t}) + \eta \nabla\Tilde{f}_{i,t}(\ub_{i,t})^{\top}(\ub_{i,t} - \zb_{i,t})\\
    &=\mathcal{D}_{\phi}(\Tilde{\ub}^*_t, \ub_{i,t}) - \mathcal{D}_{\phi}(\Tilde{\ub}^*_t, \zb_{i,t}) - \mathcal{D}_{\phi}(\zb_{i,t}, \ub_{i,t}) + \eta \nabla\Tilde{f}_{i,t}(\ub_{i,t})^{\top}(\ub_{i,t} - \zb_{i,t})\\
    &\leq \mathcal{D}_{\phi}(\Tilde{\ub}^*_t, \ub_{i,t}) - \mathcal{D}_{\phi}(\Tilde{\ub}^*_t, \zb_{i,t}) - \mathcal{D}_{\phi}(\zb_{i,t}, \ub_{i,t}) + \frac{1}{2}\norm{\ub_{i,t} - \zb_{i,t}}^2 + \frac{\eta^2}{2}\norm{\nabla\Tilde{f}_{i,t}(\ub_{i,t})}^2\\
    &\leq \mathcal{D}_{\phi}(\Tilde{\ub}^*_t, \ub_{i,t}) - \mathcal{D}_{\phi}(\Tilde{\ub}^*_t, \zb_{i,t}) + \frac{\eta^2}{2}\norm{\nabla\Tilde{f}_{i,t}(\ub_{i,t})}^2\\
    & =  \mathcal{D}_{\phi}(\Tilde{\ub}^*_t, \ub_{i,t}) - \mathcal{D}_{\phi}(\Tilde{\ub}^*_t, \ub_{i,t}^{\prime}) + \mathcal{D}_{\phi}(\Tilde{\ub}^*_t, \ub_{i,t}^{\prime}) - \mathcal{D}_{\phi}(\Tilde{\ub}^*_t, \zb_{i,t}) + \frac{\eta^2}{2}\norm{\nabla\Tilde{f}_{i,t}(\ub_{i,t})}^2\\
    &\leq \mathcal{D}_{\phi}(\Tilde{\ub}^*_t, \ub_{i,t}) - \mathcal{D}_{\phi}(\Tilde{\ub}^*_t, \ub_{i,t}^{\prime}) + \sum_{j} [\Pb]_{ji}\mathcal{D}_{\phi}(\Tilde{\ub}^*_t, \zb_{j,t-1}) - \mathcal{D}_{\phi}(\Tilde{\ub}^*_t, \zb_{i,t}) + \frac{\eta^2}{2}\norm{\nabla\Tilde{f}_{i,t}(\ub_{i,t})}^2,
    \end{split}    
\end{equation}
 where the second inequality is based on the optimality of $\zb_{i,t}$; the fourth inequality is due to the strong convexity of $\phi(\cdot)$ and the fifth inequality is based on Assumption \ref{A: Jensen's inequality for the Bregman divergence}.\\\\  
Based on Theorem \ref{T: Theorem 7 in ghai2022non}, Lemma \ref{L: Update deviation between OGD and OMD (distributed)}, and the Lipschitz assumption on $\mathcal{D}_{\phi}$, we have
\begin{equation}\label{Eq4: Dynamic regret bound (non-convex)}
 \begin{split}
     \norm{\mathcal{D}_{\phi}(\Tilde{\ub}^*_t, \ub_{i,t}) - \mathcal{D}_{\phi}(\Tilde{\ub}^*_t, \ub_{i,t}^{\prime})}&\leq W\norm{\ub_{i,t} - \ub^{\prime}_{i,t}} \leq O(W\eta^{3/2}).
 \end{split}   
\end{equation}
And based on Lemma \ref{L: consecutive distance2}, we get
\begin{equation}\label{Eq5: Dynamic regret bound (non-convex)}
\begin{split}
    \max_{i,j\in [m]}\norm{\ub_{i,t} - \ub_{j,t}} = \max_{i,j\in [m]}\norm{q(\xb_{i,t}) - q(\xb_{j,t})} = O\big(W\eta\big).
\end{split} 
\end{equation}
With Equations \eqref{Eq3: Dynamic regret bound (non-convex)}, \eqref{Eq4: Dynamic regret bound (non-convex)} and \eqref{Eq5: Dynamic regret bound (non-convex)}, we derive
\begin{equation}\label{Eq6: Dynamic regret bound (non-convex)}
\begin{split}
    \Tilde{f}_{i,t}(\ub_{j,t}) - \Tilde{f}_{i,t}(\Tilde{\ub}^*_t) &= \Tilde{f}_{i,t}(\ub_{j,t}) - \Tilde{f}_{i,t}(\ub_{i,t}) + \Tilde{f}_{i,t}(\ub_{i,t}) - \Tilde{f}_{i,t}(\Tilde{\ub}^*_t)\\
    &\leq G_F\norm{\ub_{i,t} - \ub_{j,t}} + O(\eta^{1/2}W)\\
    &+ \frac{1}{\eta}\sum_{j} [\Pb]_{ji}\mathcal{D}_{\phi}(\Tilde{\ub}^*_t, \zb_{j,t-1}) - \frac{1}{\eta}\mathcal{D}_{\phi}(\Tilde{\ub}^*_t, \zb_{i,t}) + \frac{\eta}{2}\norm{\nabla\Tilde{f}_{i,t}(\ub_{i,t})}^2\\
    &\leq O\big(G_FW\eta\big) + O(\eta^{1/2}W)\\
    &+ \frac{1}{\eta}\sum_{j} [\Pb]_{ji}\mathcal{D}_{\phi}(\Tilde{\ub}^*_t, \zb_{j,t-1}) - \frac{1}{\eta}\mathcal{D}_{\phi}(\Tilde{\ub}^*_t, \zb_{i,t}) + \frac{\eta}{2}\norm{\nabla\Tilde{f}_{i,t}(\ub_{i,t})}^2.
\end{split}    
\end{equation}
Based on the definition of Bregman divergence, we have the following relationship
\begin{equation}\label{Eq7: Dynamic regret bound (non-convex)}
\begin{split}
    &\mathcal{D}_{\phi}(\Tilde{\ub}^*_t, \zb_{i,t-1}) - \mathcal{D}_{\phi}(\Tilde{\ub}^*_t, \zb_{i,t})\\
    =&\left(\nabla \phi(\zb_{i,t}) - \nabla \phi(\zb_{i,t-1})\right)^{\top}(\Tilde{\ub}^*_t - \zb_{i,t}) + \mathcal{D}_{\phi}(\zb_{i,t} - \zb_{i,t-1})\\
    =&\left(\nabla \phi(\zb_{i,t}) - \nabla \phi(\zb_{i,t-1})\right)^{\top}\Tilde{\ub}^*_t + \left(\phi(\zb_{i,t}) - \nabla \phi(\zb_{i,t})^{\top}\zb_{i,t}\right) - \left(\phi(\zb_{i,t-1}) - \nabla \phi(\zb_{i,t-1})^{\top}\zb_{i,t-1}\right)
\end{split}    
\end{equation}
Summing Equation \eqref{Eq6: Dynamic regret bound (non-convex)} over $i$, then based on Equation \eqref{Eq7: Dynamic regret bound (non-convex)} we get
\begin{equation}\label{Eq8: Dynamic regret bound (non-convex)}
\begin{split}
    &\sum_{i}\Tilde{f}_{i,t}(\ub_{j,t}) - \Tilde{f}_{i,t}(\Tilde{\ub}^*_t)\\
    \leq &O\big(mG_FW\eta\big) + O(m\eta^{1/2}W) + \sum_i\frac{\eta}{2}\norm{\nabla\Tilde{f}_{i,t}(\ub_{i,t})}^2\\
    + &\frac{1}{\eta}\sum_i \left[\left(\nabla \phi(\zb_{i,t}) - \nabla \phi(\zb_{i,t-1})\right)^{\top}\Tilde{\ub}^*_t + \left(\phi(\zb_{i,t}) - \nabla \phi(\zb_{i,t})^{\top}\zb_{i,t}\right) - \left(\phi(\zb_{i,t-1}) - \nabla \phi(\zb_{i,t-1})^{\top}\zb_{i,t-1}\right)\right].
\end{split}
\end{equation}

Then by summing Equation \eqref{Eq8: Dynamic regret bound (non-convex)} over $[T_s,\ldots, T]$, we have
\begin{equation}\label{Eq9: Dynamic regret bound (non-convex)}
\begin{split}
    &\sum_{t=T_s}^T\sum_{i}\Tilde{f}_{i,t}(\ub_{j,t}) - \Tilde{f}_{i,t}(\Tilde{\ub}^*_t)\\
    \leq &O\big(mTG_FW\eta\big) + O(mT\eta^{1/2}W) + \sum_{t=T_s}^T\sum_i\frac{\eta}{2}\norm{\nabla\Tilde{f}_{i,t}(\ub_{i,t})}^2\\
    + &\frac{1}{\eta}\left[ \sum_{t=T_s}^T (\Tilde{\ub}^*_t - \Tilde{\ub}^*_{t+1})^{\top}\left(\sum_i \nabla \phi(\zb_{i,t})\right) + \left(\sum_i \nabla \phi(\zb_{i,T})\right)^{\top}(\Tilde{\ub}^*_T) - \left(\sum_i \nabla \phi(\zb_{i,T_s-1})\right)^{\top}(\Tilde{\ub}^*_{T_s}) \right]\\
    + &\frac{1}{\eta}\sum_i\left[\left(\phi(\zb_{i,T}) - \nabla \phi(\zb_{i,T})^{\top}\zb_{i,T}\right) - \left(\phi(\zb_{i,T_s-1}) - \nabla \phi(\zb_{i,T_s-1})^{\top}\zb_{i,T_s-1}\right)\right].
\end{split}    
\end{equation}
{\bf The upper bound of Term III}:\\
Based on Lemma \ref{lem:fereydounian2020safe-lemma}, we have for any $\xb^*_t\in \mathcal{X}^s$ and its projection to $\mathcal{X}^s_{\textnormal{in}}$: $\Tilde{\xb}^*_t$
\begin{equation}\label{Eq10: Dynamic regret bound (non-convex)}
\begin{split}
    \sum_{t=T_s}^T\sum_i \left(\Tilde{f}_{i,t}(q(\Tilde{\xb}^*_t)) - \Tilde{f}_{i,t}(q(\xb^*_t))\right) \leq \sum_{t=T_s}^T\sum_i G_FW\norm{\Tilde{\xb}^*_t - \xb^*_t} \leq mT G_FW \frac{2\sqrt{d}L\mathcal{B}_r}{C(\Ab, \bb)}.
\end{split}
\end{equation}
Substituting Equations \eqref{Eq2: Dynamic regret bound (non-convex)}, \eqref{Eq9: Dynamic regret bound (non-convex)} and \eqref{Eq10: Dynamic regret bound (non-convex)} into Equation \eqref{Eq1: Dynamic regret bound (non-convex)}, the final regret bound is as
\begin{equation}\label{Eq11: Dynamic regret bound (non-convex)}
\begin{split}
    &\sum_{t=1}\sum_i \left(f_{i,t}(\xb_{j,t}) - f_{i,t}(\xb^*_t)\right)\\
    \leq &O\big(mTG_FW\eta\big) + O(mT\eta^{1/2} W) + \sum_{t=T_s}^T\sum_i\frac{\eta}{2}\norm{\nabla\Tilde{f}_{i,t}(\ub_{i,t})}^2\\
    + &\frac{1}{\eta}\left[ \sum_{t=T_s}^T (\Tilde{\ub}^*_t - \Tilde{\ub}^*_{t+1})^{\top}\left(\sum_i \nabla \phi(\zb_{i,t})\right) + \left(\sum_i \nabla \phi(\zb_{i,T})\right)^{\top}(\Tilde{\ub}^*_T) - \left(\sum_i \nabla \phi(\zb_{i,T_s-1})\right)^{\top}(\Tilde{\ub}^*{T_s}) \right]\\
    + &\frac{1}{\eta}\sum_i\left[\left(\phi(\zb_{i,T}) - \nabla \phi(\zb_{i,T})^{\top}\zb_{i,T}\right) - \left(\phi(\zb_{i,T_s-1}) - \nabla \phi(\zb_{i,T_s-1})^{\top}\zb_{i,T_s-1}\right)\right] + O(T_0+T_1) + mT G_FW \frac{2\sqrt{d}L\mathcal{B}_r}{C(\Ab, \bb)}\\
    = &{\color{black} O(T_0 + T_1 + T\sqrt{\eta} + \frac{T\sqrt{\log T_0}}{\sqrt{T_0}} + \frac{1}{\eta} + \frac{1}{\eta}\sum_{t=T_s}^T \norm{\Tilde{\ub}^*_t - \Tilde{\ub}^*_{t+1}})},
\end{split}
\end{equation}
where the final regret bound is proved by applying the specified $\eta$ and $T_0$.
\end{proof}

\end{document}